\providecommand{\U}[1]{\protect\rule{.1in}{.1in}}
\numberwithin{equation}{section}
\numberwithin{equation}{section}
\numberwithin{equation}{section}
\newtheorem{definition}{Definition}[section]
\newtheorem{theorem}[definition]{Theorem}
\newtheorem{lemma}[definition]{Lemma}
\newtheorem{proposition}[definition]{Proposition}
\newtheorem*{theorem*}{Theorem}
\theoremstyle{definition} {\newtheorem{remark}[definition]{Remark}}
\begin{document}

\title[Dimension reduction and structured deformations]{Dimension reduction in the context of structured deformations}
%\thanks{Dedicated to our friend and colleague Gra\c{c}a Carita, who left us far too soon.}
\author{Gra\c{c}a Carita}
%\address{Departamento de Matem\'atica, Universidade de \'Evora, Rua Rom\~ao Ramalho, 59, 7000-671 \'Evora, Portugal}
%\email[G.~Carita]{gcarita@uevora.pt}
\author{Jos\'{e} Matias}
\address{Departamento de Matem\'atica, Instituto Superior T\'ecnico, Av.\@ Rovisco Pais, 1, 1049-001 Lisboa, Portugal}
\email[J.~Matias]{jose.c.matias@tecnico.ulisboa.pt}
\author{Marco Morandotti}
\address{Technische Universit\"at M\"unchen, Boltzmannstrasse 3, 85748 Garching b.\@ M\"unchen, Germany}
\email[M.~Morandotti \myenv]{marco.morandotti@ma.tum.de}
\author{David R.\@ Owen}
\address{Department of Mathematical Sciences, Carnegie Mellon University, 5000 Forbes Ave., Pittsburgh, 15213 USA}
\email[D.~R.~Owen]{do04@andrew.cmu.edu}
\date{\today}

\begin{abstract}
In this paper we apply both the procedure of dimension reduction and the incorporation of structured deformations to a three-dimensional continuum in the form of a thinning domain.
We apply the two processes one after the other, exchanging the order, and so obtain 
for each order both a relaxed bulk and a relaxed interfacial energy.
Our implementation requires some substantial modifications of the two relaxation procedures.
For the specific choice of an initial energy including only the surface term, we compute the energy densities explicitly and show that they are the same, independent of the order of the relaxation processes.
Moreover, we compare our explicit results with those obtained when the limiting process of dimension reduction and of passage to the structured deformation is carried out at the same time.
We finally show that, in a portion of the common domain of the relaxed energy densities, the simultaneous procedure gives an energy strictly lower than that obtained in the two-step relaxations.
\end{abstract}

\keywords{Dimension reduction, structured deformations, relaxation, integral representation of functionals, explicit formulas}
\subjclass[2010]{
49J45, %Methods involving semicontinuity and convergence; relaxation
(74Kxx, %Thin bodies, structures
74A60, % Micromechanical theories
74G65% Energy minimization
)}
\dedicatory{Dedicated to our friend and colleague Gra\c{c}a Carita, who left us far too soon.}

\maketitle

\tableofcontents

\section{Introduction}\label{intro}
Classical continuum theories of elastic bodies are amenable to refinements that broaden their range of applicability or that adapt them to specific physical contexts. 
In this article we consider refinements that (i) incorporate into a classical theory the effects of submacroscopic slips and separations (disarrangements) or that (ii) adapt the theory to the description of thin bodies. 
Refinements of the type (i) are intended to describe finely layered bodies such as a stack of papers, granular bodies such as a pile of sand, or bodies with defects such as a metal bar. 
Those of type (ii) are intended to provide descriptions of membranes such as a sheet of rubber, descriptions of thin plates such as a sheet of metal, and descriptions of fibered thin bodies such as a sheet of paper. 
There are available a variety of approaches for incorporating disarrangements and for adaptation to the case of thin bodies: examples of refinements of type (i) are mechanical theories of no-tension materials \cite{angelillo,DO,LSZ}, of granular media \cite{numberone,K,Mue}, of single and polycrystals \cite{KS4,R2}, and of elastic bodies in the multiscale geometrical setting of structured deformations \cite{DO1,owen}, while for refinements of type (ii) the method of dimension reduction via $\Gamma$-convergence \cite{BF2001,LDR95,LDR96} and the method of dimension reduction via Taylor expansions \cite{DPZ} provide examples.

Our goal in this paper is to implement in succession refinements of both types, starting from a classical, energetic description of three-dimensional elastic bodies. 
Specifically, for a refinement of type (i) we choose the context of structured deformations to incorporate the effects of submacroscopic slips and separations into a refined energetic response, while for a refinement of type (ii) we employ the method of dimension reduction via $\Gamma$-convergence to obtain a refined energetic response. 
With the starting point a three-dimensional body with a given energetic response, the two types of refinements can be carried out in two different orders, and each order of applying the two types of refinements will result in an energetic description of a two-dimensional body undergoing
submacroscopic disarrangments, as indicated below in Figure \ref{figzero}:

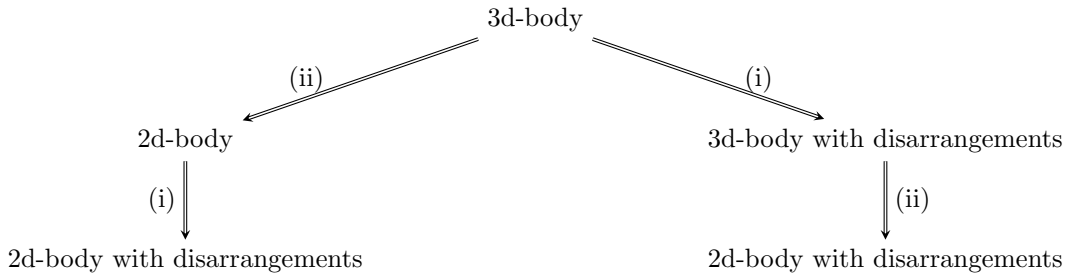
\begin{figure}[h]
\begin{center}
\begin{tikzpicture}
  \matrix (m) [matrix of math nodes,row sep=3em,column sep=4em,minimum width=2em]
  {
    & \text{3d-body} & \\
    \text{2d-body} & & \text{3d-body with disarrangements}  \\
    \text{2d-body with disarrangements} & & \text{2d-body with disarrangements} \\
%    & W_1,\Gamma_1 & \\
    };
  \path[-stealth]
    (m-1-2) edge [double] node [left] {$\text{(ii)}\quad$} (m-2-1)
            edge [double] node [right] {$\quad\text{(i)}$} (m-2-3)
    (m-2-1) edge [double] node [left] {$\text{(i)}$} (m-3-1)
    (m-2-3) edge [double] node [right] {$\text{(ii)}$} (m-3-3) ;
%    (m-1-2) edge node [left] {\cite{MS}} (m-4-2);
%    (m-2-1.east|-m-2-2) edge node [below] {$\mathcal{B}_T$}
%            node [above] {$\exists$} (m-2-2)
%    (m-1-2) edge node [right] {$\mathcal{B}_T$} (m-2-2)
%            edge [dashed,-] (m-2-1);
\end{tikzpicture}
\caption{The two paths for refinements of classical continuum theories: (i) structured deformations (SD) and (ii) dimension reduction (DR).}
\label{figzero}
\end{center}
\end{figure}

The right-hand path above begins with the incorporation of disarrangements (i) and then applies dimension reduction (ii), while the left-hand path reverses the order. 
We consider in this paper the nature of the energetic responses obtained at each step in the two paths and whether or not the two-dimensional body with disarrangements obtained via the left-hand path above has the same energetic response as that obtained via the right-hand
path. 

Incorporation of disarrangements via structured deformations (i) replaces a vector field $u$ that maps a three-dimensional body into three-dimensional space by a pair $(g,G)$, where $g$ also maps the three-dimensional body into three-dimensional space and $G$ is a matrix-valued field that gives the contributions at the macroscopic level of submacroscopic deformations without disarrangements. 
The matrix-valued field $\nabla g-G$ then gives the contributions at the macroscopic level of submacroscopic deformations due to disarrangments. 
Dimension reduction (ii) replaces the vector field $u$ by a pair $(\cl{u},\cl{d})$ of vector fields defined on a two-dimensional body, where $\cl{u}$ places the two-dimensional body into three-dimensional space and $\cl{d}$ is a "director field" on the two-dimensional body that is a geometrical residue of the passage from a three-dimensional body to a two-dimensional body.
In the diagram above, both (i) and (ii) begin with one and the same energy that depends only upon the field $u$: (i) results in an energy that depends upon the pair $(g,G)$, while (ii) results in an energy that depends on the pair $(\cl{u},\cl{d})$. 
When (i) and (ii) are applied consecutively, in either order, the resulting energy depends on a triple of fields $(\cl{g},\cl{G},\cl{d})$ defined on a two-dimensional body. 
The mathematical properties of these fields and the relation between the energy responses at each stage are summarized in the remainder of this introduction.

\subsection{Statement of the problem and results}\label{sect:statement}
Let $\omega\subset\mathbb{R}^{2}$ be a bounded open set, let $\eps>0,$ and let $\Omega_{\eps}:=\omega\times(-\frac{\eps}{2},\frac{\eps}{2})$. 
We recall that the set of \emph{special functions of bounded variation} on $\Omega_\eps$ consists of those $BV$ functions whose distributional derivative has no Cantor part, namely $SBV(\Omega_\eps;\R3):=\{u\in BV(\Omega_\eps;\R{3}): D^cu=0\}$ (see Section \ref{sect:BV}).
For a function $u\in SBV(\Omega_{\eps};\mathbb{R}^{3})$, consider the energy%
\begin{equation}
E_{\eps}(u):=\int_{\Omega_{\eps}}W_{3d}(\nabla u(x))\,\de x+\int_{\Omega_{\eps}\cap S(u)}h_{3d}\big([u](x),\nu(u)(x)\big)\,\de\mathcal{H}^{2}(x)\label{E3d}%
\end{equation}
where $S(u)$ is the jump set of $u, [u]$ is the jump of $u$ across $S(u)$, and $\nu(u)$ is the unit normal vector to $S(u)$. 
The volume and surface energy densities $W_{3d}\colon{\mathbb{R}}^{3\times3}\rightarrow\lbrack0,+\infty)$ and $h_{3d}\colon{\mathbb{R}}^{3}\times \S{2}\rightarrow\lbrack0,+\infty)$ are continuous functions satisfying the following hypotheses:
\begin{itemize}
	\item[$(H_{1})$] There exists a constant $c_W>0$ such that growth conditions from above and below are satisfied%
	\begin{align}
	\frac{1}{c_W} |A|^p & \leq  {W}_{3d}(A), \label{coerc} \\
	|W_{3d}(A)-W_{3d}(B)| & \leq  c_W|A-B|(1+|A|^{p-1}+|B|^{p-1}), \label{growth} 
	\end{align}
	for any $A,B \in\mathbb{R}^{3\times3},$ and for some $p >1$.
\item[$(H_{2})$] There exists a constant $c_h>0,$ such that for all
	$(\lambda,\nu)\in\mathbb{R}^{3}\times \S{2}$%
	$$\frac1{c_h}|\lambda|\leq h_{3d}(\lambda,\nu)\leq c_{h}|\lambda|.$$
\item[$(H_{3})$] $h_{3d}(\cdot,\nu)$ is \emph{positively $1$-homogeneous}: for all
	$t>0$, $\lambda\in\R{3}$
	$$h_{3d}(t\lambda,\nu)=t\,h_{3d}(\lambda,\nu).$$
\item[$(H_{4})$] $h_{3d}(\cdot,\nu)$ is \emph{subadditive}: for all
	$\lambda_{1},\lambda_{2}\in\mathbb{R}^{3}$
	$$h_{3d}(\lambda_{1}+\lambda_{2},\nu)\leq h_{3d}(\lambda_{1},\nu)+h_{3d}(\lambda_{2},\nu).$$
\end{itemize}

\begin{remark}\label{energies} 
\textit{(i)} The coercivity condition \eqref{coerc} in ($H_1$), although useful to obtain $L^p$ boundedness of the gradients, is not physically desirable.
It can be removed following the argument in \cite[proof of Proposition 2.22, Step 2]{CF}: if $W_{3d}$ is not coercive, one can consider $W_{3d}^\beta(\cdot):=W_{3d}(\cdot)+\beta|\cdot|^p$ and then take the limit as $\beta\to0$. \\
\textit{(ii)} By fixing $B$ in \eqref{growth}, one can easily show that $W_{3d}$ satisfies also a growth condition of order $p$, that is, there exists a constant $C>0$ such that for every $A\in\R{3\times3}$
\begin{equation}\label{pgrowth}
W_{3d}(A)\leq C(1+|A|^p).
\end{equation}
\end{remark}

Under assumptions $(H_1)$--$(H_4)$, we carry out both a procedure of dimension reduction as $\eps\to0$ to obtain an energy functional defined on the cross--section $\omega$, and a procedure of relaxation to obtain an energy functional defined on structured deformations. 
The two procedures performed consecutively result in a doubly relaxed energy that may depend upon the order chosen.
We will perform the two processes in both possible orders and compare the doubly relaxed energies.
The schematic description of the two possible orders in Figure \ref{figzero} now takes the form in Figure \ref{figone}.
\begin{figure}[h]
\begin{center}
\begin{tikzpicture}
  \matrix (m) [matrix of math nodes,row sep=3em,column sep=4em,minimum width=2em]
  {
    & W_{3d}, h_{3d} & \\
    W_{3d,2d}, h_{3d,2d} & & W_{3d,SD}, h_{3d,SD} \\
    W_{3d,2d,SD}, h_{3d,2d,SD} & & W_{3d,SD,2d}, h_{3d,SD,2d} \\
%    & W_1,\Gamma_1 & \\
    };
  \path[-stealth]
    (m-1-2) edge [double] node [left] {$\mathrm{DR}\quad$} (m-2-1)
            edge [double] node [right] {$\quad\mathrm{SD}$} (m-2-3)
    (m-2-1) edge [double] node [left] {$\mathrm{SD}$} (m-3-1)
    (m-2-3) edge [double] node [right] {$\mathrm{DR}$} (m-3-3) ;
%    (m-1-2) edge node [left] {\cite{MS}} (m-4-2);
%    (m-2-1.east|-m-2-2) edge node [below] {$\mathcal{B}_T$}
%            node [above] {$\exists$} (m-2-2)
%    (m-1-2) edge node [right] {$\mathcal{B}_T$} (m-2-2)
%            edge [dashed,-] (m-2-1);
\end{tikzpicture}
\caption{Energy densities for the two paths for dimension reduction (DR) and structured deformations (SD).}
\label{figone}
\end{center}
\end{figure}
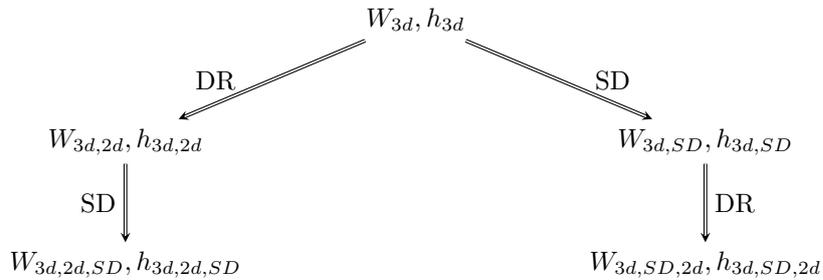
As indicated in Figure \ref{figone}, we derive formulas for bulk and interfacial densities obtained on the left-hand path and for those obtained via the right-hand path. 

The technical background for structured deformations and dimension reduction can be found in the following literature:
\begin{itemize}
\item[(i)] for structured deformations, we use the techniques introduced in \cite{CF}, where the relaxation process is obtained by combining the blow-up method of \cite{FM} with the construction of suitable approximating sequences by means of Alberti's theorem \cite{AL};
\item[(ii)] for dimension reduction, we employ the classical approach \cite{LDR95,LDR96} of rescaling the spatial variable to write the energy in the domain $\Omega=\omega\times(-1/2,1/2)$ and rescale the energy by dividing it by $\eps$.
\end{itemize}
Nevertheless, the sequential application of (i) and (ii) one after the other in both orders requires some non-trivial adaptations which are detailed in Remark \ref{independent}.

We now summarize in abbreviated form the main results of this paper, and we refer the reader to Sections \ref{sect:LHS} and \ref{sect:RHS} for more detailed versions.

\paragraph{\textbf{The left-hand path}}
According to the diagram in Figure \ref{figone}, we first perform (DR) and then (SD).
Following (ii), we relax the energies $F_\eps(u):=\frac1\eps E_\eps(u)$ defined for $u\in SBV(\Omega_\eps;\R3)$ to an energy $\cF_{3d,2d}(\cl u,\cl d)$ defined for pairs $(\cl u,\cl d)\in SBV(\omega;\R3)\times L^p(\omega;\R3)$.
The deformation $\cl u(x_\alpha)$ is the limit of deformations $u_n(x_\alpha,x_3)$, where the dependence on the out-of-plane variable vanishes in the limit, whereas the vector $\cl d$ emerges as a weak limit of the out-of-plane deformation gradient \cite{BFM,BFM1}. 
\begin{theorem*}[Theorem \ref{T3d2d}]
Given a pair $(\cl u,\cl d)\in SBV(\omega;\R3){\times}L^p(\omega;\R3)$, the relaxed energy $\cF_{3d,2d}(\cl u,\cl d)$ defined in \eqref{107a} admits the integral representation \eqref{F3d2d}, where the relaxed energy densities $W_{3d,2d}\colon\R{3\times2}\times\R3\to[0,+\infty)$ and $h_{3d,2d}\colon\R3{\times}\S1\to[0,+\infty)$ are given by \eqref{107b} and \eqref{107c}, respectively.
\end{theorem*}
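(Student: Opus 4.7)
The plan is a $\Gamma$-convergence analysis of the rescaled functionals, following the Le~Dret--Raoult / Braides--Fonseca--Francfort dimension-reduction scheme adapted to the $SBV$ setting. First I would pass to the fixed reference domain $\Omega=\omega\times(-\frac12,\frac12)$ via $v_\eps(x_\alpha,x_3):=u_\eps(x_\alpha,\eps x_3)$. Under this change of variables $\nabla u_\eps$ becomes $\bigl(\nabla_\alpha v_\eps\bigm|\tfrac{1}{\eps}\partial_3 v_\eps\bigr)$, the jump normal picks up an anisotropic rescaling, and the $\mathcal{H}^2$-measure on the jump set transforms by the corresponding Jacobian factor. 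The functional $F_\eps(u)=\frac{1}{\eps}E_\eps(u)$ becomes
\[
\hat F_\eps(v)=\int_\Omega W_{3d}\Bigl(\nabla_\alpha v\Bigm|\tfrac{1}{\eps}\partial_3 v\Bigr)\,\de x + \int_{S(v)\cap\Omega} h_{3d}\bigl([v],\nu_\eps(v)\bigr)\,J_\eps\,\de\mathcal H^2,
\]
with $\nu_\eps$ and $J_\eps$ encoding the rescaling. Coercivity $(H_1)$ and $(H_2)$ give $L^p$ bounds on $\nabla_\alpha v_\eps$ and $\tfrac{1}{\eps}\partial_3 v_\eps$ together with uniform control of $\mathcal H^2(S(v_\eps))$, hence $SBV$-compactness; the limit $\bar u$ is independent of $x_3$, while $\bar d$ arises as the weak $L^p$ limit of $\tfrac{1}{\eps}\partial_3 v_\eps$.

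For the liminf inequality I would apply the Fonseca--M\"uller blow-up method. Localize $\hat F_\eps$ on open subsets of $\omega$ and pass to the Radon--Nikod\'ym derivatives of the limit measure with respect to $\mathcal L^2\mres\omega$ and $\mathcal H^1\mres S(\bar u)$. Blowing up at a Lebesgue point $x_0$ of $(\bar\nabla\bar u,\bar d)$ on the scale $r\to0$, the rescaled competitors satisfy affine boundary conditions in $x_\alpha$ with last column free in $x_3$, and standard lower-semicontinuity identifies the integrand with the cell formula defining $W_{3d,2d}(\bar\nabla\bar u(x_0),\bar d(x_0))$. At a jump point $x_0\in S(\bar u)$ with in-plane normal $\bar\nu$, blowing up with anisotropic cubes of side $r$ and height $\eps/r$ reduces to the planar cell formula giving $h_{3d,2d}([\bar u](x_0),\bar\nu(x_0))$; the key input is $(H_2)$--$(H_4)$, which guarantee that the surface contribution is preserved and that the cell problem is well-posed.

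For the limsup inequality I would proceed by density, first handling piecewise affine $\bar u$ with polyhedral jump set and $\bar d$ piecewise constant. On each bulk cell, glue a near-optimal minimizer of the cell formula defining $W_{3d,2d}$, extended $\eps$-periodically in the last variable and suitably cut off near interfaces; this produces the bulk density without creating a spurious $\mathcal{H}^2$ contribution because the height of the thin slab is $\eps$ and $h_{3d}$ is $1$-homogeneous in the jump with linear growth. Near each jump hyperplane, insert a planar transition using an almost-optimal test field for the $h_{3d,2d}$ cell formula. Finally, a diagonal argument together with the density of piecewise affine fields in $SBV\times L^p$ (in the appropriate convergence) extends the construction to arbitrary $(\bar u,\bar d)$.

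The main obstacle is the interplay between the bulk recovery sequence, which needs rapid oscillation in $x_3$ to activate the minimization in the last column, and the surface energy, which could absorb non-negligible mass from such oscillations. This requires a careful truncation across horizontal interfaces and exploits the linear bound $h_{3d}(\lambda,\nu)\le c_h|\lambda|$ from $(H_2)$ together with the subadditivity $(H_4)$, so that the measure-theoretic estimate on the contribution of the transition layers vanishes as $\eps\to0$. A secondary but nontrivial point is to verify that $W_{3d,2d}$ is continuous and satisfies $p$-growth, so that the integral representation is well-defined and the blow-up arguments on Lebesgue points apply; this follows from $(H_1)$ and Remark \ref{energies}(ii) by standard arguments on the cell formula.
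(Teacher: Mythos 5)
Your liminf strategy (localize, pass to Radon--Nikod\'ym derivatives of the limit measure with respect to $\mathcal L^2$ and $\mathcal H^1\mres S(\cl u)$, and apply Fonseca--M\"uller blow-up at Lebesgue and jump points) matches the paper's Steps 3 and 4 closely, and you correctly identify the preliminary facts needed about $W_{3d,2d}$ (quasiconvexity, $p$-growth) from Proposition~\ref{p100}. The limsup strategy, however, diverges from the paper in a way that contains a genuine error, not merely a different route.

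First, the paper does not prove the limsup by gluing global recovery sequences over a piecewise-affine approximation of $\cl u$. It proves it \emph{pointwise}: Proposition~\ref{569} establishes, via a De Giorgi--Letta-type subadditivity argument and diagonalization over a fine cover, that the localized functional $\cF_{3d,2d}^{\cl d}(\cl u;\cdot)$ is itself (the trace of) a Radon measure absolutely continuous with respect to $\mathcal L^2+\mathcal H^1\mres S(\cl u)$; then Steps 1 and 2 bound its Radon--Nikod\'ym derivatives from above at a.e.\ $x_0$. Your proposal skips this measure-theoretic step entirely and replaces it with a density argument in the constrained space $SBV(\omega;\R3)\times L^p(\omega;\R3)$, but you do not address how the approximation preserves the weak convergence constraint $\tfrac1{\eps_n}\int_I\nabla_3 u_n\,\de x_3\rightharpoonup\cl d$ nor the constraint $\nu(u_n)\cdot e_3=0$, both of which are part of the definition \eqref{107a}.

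Second, and more seriously, the construction you describe for the bulk recovery sequence --- ``extend $\eps$-periodically in the last variable'' --- is incompatible with the admissibility constraint $\nu(u_n)\cdot e_3=0$. Any $\eps$-periodic extension in $x_3$ that is discontinuous across period boundaries creates jump interfaces with normal parallel to $e_3$, which are explicitly forbidden in \eqref{107a}. The ``obstacle'' you identify (oscillation in $x_3$ interacting with the surface term) is a symptom of choosing the wrong ansatz. The paper's competitor \eqref{116} is \emph{affine} in $x_3$,
\[
u_{\delta,n}(x_\alpha,x_3)=\cl u(x_\alpha)+\tfrac{\delta}{n}\,\zeta\!\Big(\tfrac{n(x_\alpha-x_0)}{\delta}\Big)+\eps_n x_3\Big(\cl d(x_\alpha)-\cl d(x_0)+z\!\Big(\tfrac{n(x_\alpha-x_0)}{\delta}\Big)\Big),
\]
so the director $\cl d$ is recovered through the coefficient of the linear $x_3$-term, while the fast oscillation needed to activate the cell formula \eqref{107b} (both $\zeta$ and $z$) takes place \emph{in $x_\alpha$}, not in $x_3$; this automatically keeps $\nu(u_{\delta,n})\cdot e_3=0$ and keeps the average $\tfrac1{\eps_n}\int_I\nabla_3 u_{\delta,n}\,\de x_3$ equal to the right thing. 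The cell formula itself is two-dimensional (defined on $Q'$, not $Q$), so there is no ``$x_3$-column'' of the test function to oscillate: $z$ is a free $L^p_{Q'\text{-per}}(\R2;\R3)$ field with $\int_{Q'}z=d$, decoupled from $u$. Recasting your limsup argument in these terms, and adding the measure representation step from Proposition~\ref{569}, would bring the proposal in line with a correct proof.
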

In Proposition \ref{p100} we prove that the densities $W_{3d,2d}$ and $h_{3d,2d}$ satisfy the hypotheses of the relaxation method for structured deformations of \cite{CF}, which leads to the relaxed energy $\cF_{3d,2d,SD}(\cl g,\cl G,\cl d)$ defined for $(\cl g,\cl G,\cl d)\in SBV(\omega;\R3){\times} L^1(\omega;\R{3{\times}2}){\times} L^p(\omega; \R3)\to[0,+\infty)$ and to the following theorem.
\begin{theorem*}[Theorem \ref{secondleft}]
Given a triple $(\cl g,\cl G,\cl d)\in SBV(\omega;\R3)\times L^1(\omega;\R{3{\times}2})\times L^p(\omega; \R3)$, the relaxed energy $\cF_{3d,2d,SD}(\cl g,\cl G,\cl d)$ defined in \eqref{200} admits the integral representation \eqref{201}, where the relaxed energy densities $W_{3d,2d,SD}\colon\R{3\times2}{\times}\R{3\times2}{\times}\R3\to[0,+\infty)$ and $h_{3d,2d,SD}\colon\R3{\times}\S1\to[0,+\infty)$ are given by \eqref{2011} and \eqref{2012}, respectively.
\end{theorem*}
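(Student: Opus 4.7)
The strategy is to recognise the second relaxation on the left-hand path as the structured-deformations machinery of \cite{CF} applied to the two-dimensional functional $\cF_{3d,2d}$ delivered by Theorem~\ref{T3d2d}, with the director field $\cl d$ playing the role of a frozen parameter: the SD approximating sequences perturb only the in-plane deformation (from $\cl u$ toward a $(\cl g,\cl G)$ pair), while $\cl d$ is left unchanged. Because Proposition~\ref{p100} has already verified that $W_{3d,2d}$ and $h_{3d,2d}$ satisfy the coercivity, $p$-growth, linear growth, positive $1$-homogeneity, subadditivity and Lipschitz-type continuity hypotheses needed by \cite{CF}, the work reduces to running that scheme in parallel over the extra slot occupied by $\cl d$.

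For the liminf bound I would apply the blow-up method of \cite{FM,CF}. To each admissible sequence $u_n$ realising the convergences in \eqref{200} I would attach the energy measure
\begin{equation*}
\mu_n:=W_{3d,2d}(\nabla u_n,\cl d)\,\mathcal{L}^2\lfloor\omega+h_{3d,2d}([u_n],\nu_{u_n})\,\mathcal{H}^1\lfloor(S(u_n)\cap\omega),
\end{equation*}
pass to a weakly-$*$ convergent subsequence $\mu_n\overset{*}{\rightharpoonup}\mu$, and decompose $\mu$ via Radon--Nikodym with respect to $\mathcal{L}^2\lfloor\omega$ and $\mathcal{H}^1\lfloor S(\cl g)$. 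At $\mathcal{L}^2$-a.e.\ $x_0\in\omega$ which is simultaneously a Lebesgue point of $\cl d$ and an approximate continuity point of $\cl G$, a blow-up around $x_0$ produces a test sequence for the cell formula \eqref{2011} and hence $\mathrm{d}\mu/\mathrm{d}\mathcal{L}^2(x_0)\ge W_{3d,2d,SD}(\nabla\cl g(x_0),\cl G(x_0),\cl d(x_0))$. An analogous blow-up at $\mathcal{H}^1$-a.e.\ jump point of $\cl g$ delivers the interfacial bound from \eqref{2012}.

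For the limsup bound I would build recovery sequences through the \cite{CF} approach based on Alberti's theorem \cite{AL}: locally around each $x_0$ one constructs admissible fields whose bulk gradients have the prescribed weak limit $\cl G(x_0)$ and whose jump contributions saturate the cell formula for $W_{3d,2d,SD}$, while a parallel construction across the jump set $S(\cl g)$ saturates the cell formula for $h_{3d,2d,SD}$. A standard gluing followed by diagonalisation produces a single admissible sequence whose energies converge to the right-hand side of \eqref{201}. The director $\cl d$ remains untouched in every step, reflecting the fact that SD does not act on the out-of-plane residue.

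The main obstacle, and the source of the non-trivial adaptations announced in Remark~\ref{independent}, is the interplay between the low-regularity parameter $\cl d\in L^p(\omega;\R3)$ and the SD blow-up. Since $\cl d$ is only $L^p$, the blow-up must be restricted to the full-measure set of its Lebesgue points, and the continuity of $W_{3d,2d}(\cdot,\cdot)$ in the director slot---obtained in Proposition~\ref{p100}---has to be strong enough to absorb the oscillation of $\cl d$ on the vanishing blow-up scale. A parallel issue arises on the one-dimensional slices employed in the interfacial blow-up, which is resolved through the approximate continuity of $\cl d$ at $\mathcal{H}^1$-a.e.\ point of $S(\cl g)$. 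Once these technicalities are settled the remainder is a faithful, if bookkeeping-heavy, transposition of \cite{CF} to the two-dimensional setting with the extra parameter $\cl d$.
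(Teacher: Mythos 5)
Your proposal is correct in its overall framework---treat $\cl d$ as a frozen parameter and transpose the structured-deformations relaxation of \cite{CF}, with Proposition~\ref{p100} supplying the required hypotheses on $W_{3d,2d}$ and $h_{3d,2d}$. The one place where your route diverges from the paper's is in how the low regularity of $\cl d\in L^p(\omega;\R3)$ is handled. You propose to run the blow-up directly at Lebesgue points of $\cl d$ and let the Lipschitz-type continuity \eqref{wishful} absorb the oscillation of the director at the vanishing scale; this is plausible, but it requires controlling cross-terms of the form $|\nabla u_n|^{p-1}\,|\cl d(x)-\cl d(x_0)|$ via H\"older, which is a delicate step. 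The paper (Remark~\ref{pwc}) instead declares that it follows the technique of \cite{MS}: one first approximates $\cl d$ by piecewise-constant fields $\cl d_k$, proves the representation for piecewise-constant $\cl d$ (so that the $x$-dependence of the bulk integrand is locally constant and \cite{CF} applies verbatim on each piece), and then passes to the limit using \eqref{wishful} together with the approximation result \cite[Lemma~2.9]{CF}. This pre-approximation sidesteps the Lebesgue-point estimates entirely and mirrors what the paper already does in Step~1 of the proof of Theorem~\ref{T3d2d} and in Proposition~\ref{569}. Both routes should reach the same conclusion; the paper's is the more economical one given that the Lipschitz estimate \eqref{wishful} and Lemma~\ref{ctap} are already in hand.
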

%The proof of Theorem \ref{T3d2d} involves finding suitable upper and lower bounds for the energy densities $W_{3d,2d}$ and $h_{3d,2s}$, and is obtained by applying the blow-up method of \cite{FM}; Theorem \ref{secondleft} follows by applying the results in \cite{CF}.

\paragraph{\textbf{The right-hand path}}
According to the diagram in Figure \ref{figone}, we first perform (SD) and then (DR).
Following (i), the assumptions $(H_1)$--$(H_4)$ allow us to apply directly \cite[Theorem 2.17]{CF} to obtain a representation theorem for the relaxed energy $\cF_{3d,SD}\colon SBV(\Omega;\R3)\times L^1(\Omega;\R{3\times2})\to[0,+\infty)$ defined for structured deformations $(g,G^{\backslash3})$.
Strictly speaking, the structured deformations under consideration are pairs $(g,(G^{\backslash3}|\nabla_3 g))\in SBV(\Omega;\R3)\times L^1(\Omega;\R{3\times3})$, where for each $A\in\R{3\times2}$ and $q\in\R{3\times1}$, $(A|q)\in\R{3\times3}$ is formed from the two columns of $A$ and the single column of $q$.
Because the $3\times3$ matrix values of $(g,(G^{\backslash3}|\nabla_3 g))$ are determined by the pair of fields $(g,G^{\backslash3})$, we allow this abuse of terminology and notation.
\begin{theorem*}[Theorem \ref{firstSD}]
Given a pair $(g,G^{\backslash3})\in SBV(\Omega;\R3)\times L^1(\Omega;\R{3\times2})$, the relaxed energy density $\cF_{3d,SD}(g,G^{\backslash3})$ defined in \eqref{2018} admits the integral representation \eqref{2017}, where the relaxed energy densities $W_{3d,SD}\colon\R{3\times3}\times\R{3\times2}\to[0,+\infty)$ and $h_{3d,SD}\colon\R3\times\S2\to[0,+\infty)$ are given by \eqref{2015} and \eqref{2016}, respectively.
\end{theorem*}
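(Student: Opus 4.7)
The plan is to derive Theorem \ref{firstSD} as an essentially direct application of \cite[Theorem 2.17]{CF} to the energy $E_1$ on the fixed three-dimensional domain $\Omega=\omega\times(-1/2,1/2)$. The only work beyond citing that result is (a) checking that the hypotheses $(H_1)$--$(H_4)$ match those required in \cite{CF}, and (b) reconciling the notational convention that encodes the structured deformation as the pair $(g,G^{\backslash 3})$ with $G^{\backslash 3}\in L^1(\Omega;\R{3\times 2})$ rather than as the pair $(g,G)$ with $G\in L^1(\Omega;\R{3\times 3})$.

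For step (a), the $p$-coercivity \eqref{coerc}, the continuity-with-polynomial-modulus bound \eqref{growth}, and the $p$-growth consequence \eqref{pgrowth} recorded in Remark \ref{energies}(ii) are exactly the conditions imposed on the bulk density in \cite{CF}, and the two-sided linear bound $(H_2)$, the positive $1$-homogeneity $(H_3)$, and the subadditivity $(H_4)$ are exactly the conditions imposed there on the surface density. No approximation argument as in Remark \ref{energies}(i) is needed at this stage. Hence \cite[Theorem 2.17]{CF} supplies a bulk density $W^*\colon\R{3\times 3}\times\R{3\times 3}\to[0,+\infty)$, a surface density $h^*\colon\R3\times\S2\to[0,+\infty)$, and an integral representation of the CF-relaxed functional on pairs $(g,G)\in SBV(\Omega;\R3)\times L^1(\Omega;\R{3\times 3})$ given by cell formulas taken over SBV competitors on a unit cube $Q\subset\R3$ with affine boundary data prescribed by $(\nabla g,G)$, respectively by $(\lambda,\nu)$.

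For step (b), I would observe that if a sequence $u_n\in SBV(\Omega;\R3)$ converges to $g$ in $L^1$ with $\nabla u_n\rightharpoonup G$ in $L^1$, then the weak convergence forces the third column of $G$ to coincide with $\nabla_3 g$ in the distributional sense, so that necessarily $G=(G^{\backslash 3}|\nabla_3 g)$. Under this bijection between pairs $(g,G)$ and pairs $(g,G^{\backslash 3})$, the CF relaxed functional becomes $\cF_{3d,SD}$ defined in \eqref{2018}, and the CF cell formulas transport to formulas in the variables $(A,B)\in\R{3\times 3}\times\R{3\times 2}$ and $(\lambda,\nu)\in\R3\times\S2$ by setting $W_{3d,SD}(A,B):=W^*(A,(B|A e_3))$ and $h_{3d,SD}:=h^*$, where the third-column constraint is automatic on the admissible perturbations in the infimum. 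The only real point of care in the whole argument is this bookkeeping: one must verify that discarding the third column does not shrink the class of admissible approximating sequences in the bulk cell problem, which is immediate since the constraint on the third column of the averaged gradient is consistent with the affine boundary datum whenever $A$ and $B$ are themselves related through $(A|Ae_3)=(\nabla g|\nabla_3 g)$ with the appropriate affine trace. Once this identification is made, the formulas \eqref{2015} and \eqref{2016} are simply the CF cell formulas rewritten in the $(A,B)$-coordinates, and \eqref{2017} follows.
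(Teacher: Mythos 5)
Your overall approach — cite \cite[Theorem 2.17]{CF}, check the hypotheses, and reconcile the bookkeeping between pairs $(g,G)$ with $G\in L^1(\Omega;\R{3\times3})$ and pairs $(g,G^{\backslash 3})$ — is indeed the route the paper takes, and the paper itself devotes only one sentence to this theorem, asserting that the representation ``follows immediately from \cite[Theorem 2.17]{CF}.'' However, there is a genuine error in your step (b).

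You claim that if $u_n\to g$ in $L^1$ and $\nabla u_n\rightharpoonup G$ in $L^1$, then weak convergence forces the third column of $G$ to coincide with $\nabla_3 g$. This is false, and it is false precisely for the reason that structured deformations are a nontrivial refinement of classical deformations: the weak limit of the \emph{absolutely continuous parts} $\nabla u_n$ need not coincide with $\nabla g$, because the jump parts of $Du_n$ may carry the difference. Concretely, take $\Omega=(0,1)^3$, $g(x)=x_3$, and $u_n(x)=\lfloor nx_3\rfloor/n$. Then $u_n\to g$ in $L^1$ and $\nabla u_n\equiv 0$, so $G=0$ while $\nabla_3 g=1$; the third columns do not agree, and $\nu(u_n)\cdot e_3=1\ne 0$. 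What is true is only that the full derivative measures satisfy $Du_n\overset{*}{\rightharpoonup} Dg$; the third column of the density $G$ of the absolutely continuous part can differ from $\nabla_3 g$, and this difference is exactly the third column of the disarrangement tensor $\nabla g-G$. The restriction to $G=(G^{\backslash 3}\,|\,\nabla_3 g)$, equivalently to disarrangement tensors with vanishing third column, is therefore an \emph{explicit modeling choice} made by the authors; Remark \ref{independent} says so in words: ``The restriction in Theorem \ref{firstSD} to structured deformations of the form $(g,(G^{\backslash 3}|\nabla_3 g))$ is made in the same spirit \dots\ since it implies that the disarrangement matrix $\nabla g-(G^{\backslash 3}|\nabla_3 g)$ has third column zero.'' It rules out slips and separations across surfaces with normal parallel to the thinning direction, in parallel with the constraint $\nu(u_n)\cdot e_3=0$ on the left-hand path.

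Once that is corrected, the rest of your argument is fine: the functional $\cF_{3d,SD}(g,G^{\backslash 3})$ in \eqref{2018} is exactly the CF-relaxed energy evaluated at the pair $(g,(G^{\backslash 3}|\nabla_3 g))$, with the constraint $\nabla u_n\rightharpoonup (G^{\backslash 3}|\nabla_3 g)$ imposed on competitors; the integral representation of CF then yields \eqref{2017}, and the cell formulas \eqref{2015}--\eqref{2016} are CF's formulas with the averaged gradient constraint written as $\int_Q\nabla u\,\de x=(B^{\backslash 3}|Ae_3)$. In short: the bijection you invoke between pairs $(g,G)$ and pairs $(g,G^{\backslash 3})$ does not exist, but the \emph{restriction} of CF's relaxation to the sub-class of pairs with prescribed third column is exactly what the theorem asserts.
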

Proposition \ref{energiesCF} collects some properties of the densities $W_{3d,SD}$ and $h_{3d,SD}$.
Therefore, performing the dimension reduction on the energy $\cF_{3d,SD}$ leads to the definition of the energy $\cF_{3d,SD,2d}\colon$ $SBV(\omega;\R3)\times L^1(\omega;\R{3{\times}2})\times L^p(\omega; \R3)\to[0,+\infty)$ for triples $(\cl g,\cl G,\cl d)$ defined on the cross--section $\omega$, and to the following theorem.
\begin{theorem*}[Theorem \ref{secondright}]
Given a triple $(\cl g,\cl G,\cl d)\in SBV(\omega;\R3)\times L^1(\omega;\R{3{\times}2})\times L^p(\omega; \R3)$, the relaxed energy $\cF_{3d,SD,2d}(\cl g,\cl G,\cl d)$ defined in \eqref{2019} admits the integral representation \eqref{301}, where the relaxed energy densities $W_{3d,SD,2d}\colon$ $\R{3\times2}{\times}\R{3\times2}{\times}\R3\to[0,+\infty)$ and $h_{3d,SD,2d}\colon\R3{\times}\S1\to[0,+\infty)$ are given by \eqref{2013} and \eqref{2014}, respectively.
\end{theorem*}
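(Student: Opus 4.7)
The proof follows the same blueprint as Theorem~\ref{T3d2d}, but applied to the already SD-relaxed densities $W_{3d,SD}$ and $h_{3d,SD}$ in place of $W_{3d}$ and $h_{3d}$. The first move is to rescale the thin variable: writing $x=(x_\alpha,x_3)$ with $x_3\in(-\eps/2,\eps/2)$ and setting $y=x_3/\eps\in(-1/2,1/2)$, the functional $\frac{1}{\eps}\cF_{3d,SD}$ is transported to the fixed cylinder $\Omega=\omega\times(-1/2,1/2)$, producing an energy in which the bulk integrand is evaluated at $\bigl((\nabla_\alpha u\,|\,\eps^{-1}\nabla_3 u),G\bigr)$ and the surface integrand is evaluated at the anisotropically rescaled normal $\nu_\eps$. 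The target is to identify the $\Gamma$-limit as $\eps\to0$ in the topology for which $u_\eps\to\cl u$ in $L^1$, $\eps^{-1}\nabla_3 u_\eps\rightharpoonup\cl d$ in $L^p$, and $G_\eps\rightharpoonup\cl G$ in $L^1$, so that the limit energy decomposes into a bulk density depending on $(\cl G,\cl d)$ and a surface density on $\omega\cap S(\cl g)$ with normals in $\S{1}$.

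\textbf{Verification of hypotheses.} Before transplanting the argument, one has to check that the new densities $W_{3d,SD}(\cdot,G)$ and $h_{3d,SD}(\cdot,\nu)$ inherit from the assumptions $(H_1)$--$(H_4)$ on $W_{3d}$ and $h_{3d}$ all the structural properties needed to run the dimension-reduction procedure: continuity, $p$-growth from above and coercivity of the bulk density, together with linear growth, positive $1$-homogeneity and subadditivity of the surface density. These properties are precisely the content of Proposition~\ref{energiesCF}, which therefore makes the assumptions of Theorem~\ref{T3d2d} available with $W_{3d,SD}, h_{3d,SD}$ playing the role of $W_{3d}, h_{3d}$.

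\textbf{Main steps.} For the lower bound I would apply the blow-up method at $\mathcal L^2$-a.e.\ bulk point $x_0\in\omega$ and at $\mathcal H^1$-a.e.\ jump point $x_0\in\omega\cap S(\cl g)$, following \cite{FM,BFM,BFM1}. At a bulk point, localization and rescaling of a sequence $u_\eps$ that is almost a minimizer produce, in the limit, a cell problem on $Q'\times(-1/2,1/2)$ whose admissible class consists of $SBV$ maps whose lateral boundary trace matches the affine field with planar gradient $\cl G(x_0)$ and whose out-of-plane gradient has average $\cl d(x_0)$; the infimum over this class yields exactly the density in \eqref{2013}. At a jump point, the blow-up produces a three-dimensional jump cell with target jump $[\cl g](x_0)$ and normal extended to $\S{2}$ as $(\nu(x_0),0)$, and the infimum gives \eqref{2014}. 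For the upper bound, recovery sequences are assembled following \cite{CF}: starting from a near-optimizer of the bulk cell problem for $W_{3d,SD,2d}$ (resp.\ $h_{3d,SD,2d}$), one extends it $x_3$-independently along the thin direction, then perturbs by a vertical-layer construction that installs the correct director $\cl d$ and absorbs the $\eps$-scaling of $\nabla_3 u$; a standard Alberti-type decomposition implants the SD microstructure encoded in $\cl G$.

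\textbf{Main obstacle.} The principal subtlety, and the reason for the caveat in Remark~\ref{independent}, is the simultaneous compatibility of the two relaxation constructions. The SD relaxation used to produce $W_{3d,SD}$ was carried out in three dimensions, so its near-optimal competitors live in $SBV(Q;\R3)$ with full $3\times3$ gradients and $\S{2}$ normals, whereas the DR step requires the vertical contribution to be controlled only through $\cl d(x_0)$ and demands surface normals in $\S{1}$. One must therefore splice the inner SD competitor with an outer thin-layer construction that matches the planar datum and the director profile, and show that the artificial interfaces created by the splicing have asymptotically vanishing surface energy thanks to the linear growth of $h_{3d,SD}$ and a careful choice of transition layers of width $o(1)$ in the $x_\alpha$ direction and $o(\eps)$ in $x_3$. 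Carrying out this double-scale surgery without introducing spurious concentrations in the limit is the technical heart of the argument and the step where the proof genuinely departs from a direct citation of \cite{CF} or \cite{BFM,BFM1}.
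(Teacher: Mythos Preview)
Your high-level plan—rerun the dimension-reduction argument of Theorem~\ref{T3d2d} with $W_{3d,SD}$, $h_{3d,SD}$ in place of $W_{3d}$, $h_{3d}$, after verifying the needed hypotheses via Proposition~\ref{energiesCF}—is exactly the paper's route. However, your elaboration of that plan misreads the functional \eqref{2019} in a way that would derail the execution.

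In \eqref{2019} the field $\cl G$ is \emph{fixed}: there is no sequence $G_\eps\rightharpoonup\cl G$, and the infimum runs over $g_n$ only. Accordingly, in the bulk cell problem \eqref{2013} the boundary datum is $u|_{\partial Q'}=Ax_\alpha$ with $A=\nabla\cl g(x_0)$, \emph{not} $\cl G(x_0)$; the value $B=\cl G(x_0)$ enters only as a frozen parameter inside $W_{3d,SD}$, and there is no constraint of the form $\int_{Q'}\nabla u=B$ at this stage (that was the SD step already completed in Theorem~\ref{firstSD}). The cell problem is two-dimensional on $Q'$, not on $Q'\times(-1/2,1/2)$. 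For the same reason your ``main obstacle''—splicing three-dimensional SD competitors into the thin-layer construction, controlling artificial interfaces across scales—is a phantom: $W_{3d,SD}$ is now just a given density with the growth and Lipschitz properties of Proposition~\ref{energiesCF}, and no SD microstructure needs to be re-implanted.

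The genuine subtlety, which you do not mention, is that the fixed field $\cl G$ induces an $x_\alpha$-dependence $x_\alpha\mapsto W_{3d,SD}(M,\cl G(x_\alpha))$ of the bulk integrand, a situation not covered by Theorem~\ref{T3d2d} as written. The paper resolves this exactly as in Remark~\ref{pwc}, with the roles of $\cl d$ and $\cl G$ interchanged: approximate $\cl G$ by piecewise constant fields, apply Theorem~\ref{T3d2d} on each piece where $B$ is constant, and pass to the limit using the Lipschitz estimate of Proposition~\ref{energiesCF}(i).
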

In this case, the results follow from those in \cite{CF} for the structured deformation part (Theorem \ref{firstSD}), and from applying Theorem \ref{T3d2d} for the dimension reduction part (Theorem \ref{secondright}).

\begin{remark}\label{independent}
We want to stress here that in both paths, the relaxation due to dimension reduction and that due to structured deformations are distinct refinements of the classical energetics of elastic bodies: the first one gives rise to the vector field $\cl d$ in the energy $\cF_{3d,2d}(\cl u,\cl d)$, whereas the second one gives rise to the matrix-valued field $G$ in the energy $\cF_{3d,SD}(g,G)$.
Nevertheless, the consecutive application of the two refinements requires some non-trivial adaptations of the existing relaxation techniques underlying dimension reduction and structured deformations.
Specifically, the use of $\cF_{3d,2d}(\cl u,\cl d)$ as an initial energy for relaxation in the context of structured deformations requires that $\cl u$ be a special function of bounded variation rather than a Sobolev function.
Therefore, the dimension reduction has to be carried out in the $SBV$ setting.
Moreover, the presence of $\cl d$ in the initial energy $\cF_{3d,2d}(\cl u,\cl d)$ for the (SD) relaxation requires a new modification of the standard relaxation techniques for structured deformations (see \cite{MMZ}).

In addition, in order to connect with standard applications of dimension reduction results, the inclusion of $\cl d$ puts an additional constraint on the approximating sequences $\{u_n\}$ in the (DR) relaxation, namely $\tfrac1{\eps_{n}}\int_{I}\nabla_{3}u_{n}\,\de x_{3}\rightharpoonup \overline{d} \text{ in }L^{p}(\omega;\R3)$, see \eqref{107a}.

The novelty of our approach lies partly in the incorporation of both the lack of smoothness of the function $\cl u$ (as in \cite{BF2001}) and the constraint $\tfrac1{\eps_{n}}\int_{I}\nabla_{3}u_{n}\,\de x_{3}\rightharpoonup \overline{d} \text{ in }L^{p}(\omega;\R3)$ in \eqref{107a} on the approximating sequence $\{u_n\}$ (as in \cite{BFM}), and partly in the modifications required to apply the standard (SD) relaxation introduced in \cite{CF} (see also \cite{BBBF}).
Moreover, the condition $\nu(u_n)\cdot e_3=0$ in \eqref{107a} for the left-hand path and $\nu(g_n)\cdot e_3=0$ in \eqref{2019} for the right-hand path rule out the occurrence of slips and separations on surfaces with normal parallel to the thinning direction $e_3$ and place an additional constraint on the process of dimension reduction.
The restriction in Theorem \ref{firstSD} to structured deformations of the form $(g,(G^{\backslash 3}|\nabla_3 g))$ is made in the same spirit for the right-hand path, since it implies that the disarrangement matrix $\nabla g-(G^{\backslash 3}|\nabla_3 g)$ has third column zero.
\end{remark}

The paper is organized as follows: in Section \ref{sect:prel} we set the notation and we recall some known results that are useful in the sequel, especially about $BV$ functions and $\Gamma$-convergence.
In Section \ref{sect:LHS}, we follow the left-hand path of Figure \ref{figone}; namely we first derive an energy on the cross--section $\omega$ and then we relax the energy to obtain one defined on structured deformations.
In Section \ref{sect:RHS}, we follow the right-hand path in Figure \ref{figone}: we first relax the energy to structured deformations and then we perform the dimension reduction.

In Section \ref{sect:comp_ex}, we compare the two doubly relaxed energies from the left-hand and right-hand paths for a specific initial energy which is purely interfacial. 
Setting $W_{3d}\equiv0$ and $h_{3d}(\lambda,\nu)=|\lambda\cdot\nu|$ in \eqref{E3d}, we show that the two paths lead to the same relaxed energy.
In particular, in Proposition \ref{S1} we give explicit formulas for the energies provided by Theorems \ref{secondleft} and \ref{secondright}, thus showing that they are equal.

In Section \ref{sect:6}, we present the alternative relaxation procedure of \cite{MS} in which the introduction of disarrangements and the thinning of the domain occur simultaneously.
For the specific choice of initial energy made in Section \ref{sect:comp_ex}, we prove that the relaxed energy from the scheme of \cite{MS} is identically zero.

In Section \ref{conclusions}, we summarize the main results of this research and provide an outlook for future research.

\section{Preliminaries}\label{sect:prel}
The purpose of this section is to give a brief overview of the concepts and results that are used in the sequel. 
Almost all these results are stated without proof as they can be readily found in the references given below.

\subsection{Notation}
Throughout the manuscript, the following notation will be employed:
\begin{itemize}
\item[-] $\omega \subset \R2$ is a open bounded set and for $ 0<\eps\leq1$, $\Omega_{\eps} := \omega \times(-\frac{\eps}{2}, \frac{\eps}{2})$; moreover, we denote $\Omega_1$ by $\Omega$ and notice that $\Omega =  \omega \times I$, with $I:= (-\frac{1}{2}, \frac{1}{2})$.
\item[-] given a vector $v \in \R3$, we write $v:=(v_{\alpha},v_3)$, where $ v_\alpha:= (v_1, v_2) \in \R2$ is the vector of the first two components of $v$;
\item[-] ${\mathcal A}(\Omega)$ (resp. ${\cA}(\omega)$) is the family of all open subsets of $\Omega $ (resp. $ \omega$);
\item[-] for all $A,B\in\cA(\Omega)$ (resp. ${\cA}(\omega)$), $A\Subset B$ means that there exists a compact subset $C$ of $\Omega$ (resp. $\omega$) such that $A\subset C\subset B$;
\item[-] $\cM (\Omega)$ (resp. $\cM(\omega)$) is the set of finite Radon measures on $\Omega$ (resp. $\omega$);
\item[-] $\cL^{N}$ and $\cH^{N-1}$ stand for the  $N$-dimensional Lebesgue measure  and the $\left(  N-1\right)$-dimensional Hausdorff measure in $\R N$, respectively;
\item[-] $\lVert\mu\rVert$ stands for the total variation of a measure  $\mu\in \cM (\Omega)$ (resp. $\cM (\omega)$);
\item[-] $\S{N-1}$ stands for the unit sphere in $\R N$;
\item[-]  $Q:=I^3$ and $Q':=I^2$ denote the unit cubes centered at the origin of $\R3$ and $\R2$, respectively; 
\item[-] $Q_\eta$ (resp. $Q'_\eta$) denotes the unit cube of $\R3$ (resp. $\R2$) centered at the origin with two sides perpendicular to the vector $\eta\in\S2$ (resp. $\eta\in\S1$);
\item[-] $Q(x, \delta):=x+\delta Q$, $Q_\eta(x, \delta):=x+\delta Q_\eta$ in $\R3$, and $Q'(x, \delta):=x+\delta Q'$, $Q'_\eta(x, \delta):=x+\delta Q'_\eta$ in $\R2$; 
\item[-] for $\eta \in \S{1}$, we define $\tilde{\eta} \in \S2$ by $\tilde{\eta} := (\eta, 0)$;
\item[-]  $C$ represents a generic positive constant that may change from line to line;
\item[-] $\lim_{\delta, n} := \lim_{\delta \to 0^+} \lim_{n \to \infty}, \; \lim_{k,n} := \lim_{k \to \infty} \lim_{n \to \infty}$.
\end{itemize}

\subsection{$BV$ functions}\label{sect:BV}
We start by recalling some facts on functions of bounded variation which will be used afterwards. We refer to \cite{AFP} and the references therein for a detailed theory on this subject.

Only in this subsection, $\Omega$ denotes a generic open set in $\R{N}$. 
A function $u \in L^1(\Omega; \R d)$ is said to be of {\em bounded variation}, and we write $u \in BV(\Omega; \R d)$, if its first distributional derivatives $D_j u_i$ are in $\cM (\Omega)$ for  $i=1,\ldots,d$ and  $j=1,\ldots,N.$ 
The matrix-valued measure whose entries are $D_j u_i$ is denoted by $Du.$ The space $BV(\Omega; \R d)$ is a Banach space when endowed with the norm
$$\lVert u\rVert_{BV} := \lVert u\rVert_{L^1} + \lVert Du\rVert(\Omega).$$
By the Lebesgue Decomposition theorem $Du$ can be split into the sum of two mutually singular measures $D^{a}u$ and $D^{s}u$ (the absolutely continuous part  and the singular part, respectively, of $Du$ with respect to the Lebesgue measure $\mathcal{ L}^N$). 
By $\nabla u$ we denote the Radon-Nikod\'{y}m derivative of $D^{a}u$ with respect to $\mathcal L^N$, so that we can write
$$Du= \nabla u \mathcal L^N \res \Omega + D^{s}u.$$

Let $\Omega_u$ be the set of points where the approximate limits of $u$ exists and $S(u)$ the {\it jump set} of this function, i.e., the set of points $x\in \Omega\setminus \Omega_u$ for which there exists $a, \,b\in \R N$ and  a unit vector $\nu  \in \S{N-1}$, normal to $S(u)$ at $x$, such that $a\neq b$ and
\begin{equation} \label{jump1} 
\lim_{\delta \to 0^+} \frac {1}{\delta^N} \int_{\{ y \in Q_{\nu}(x,\delta) : (y-x)\cdot\nu  > 0 \}} | u(y) - a| \,\de y=0
\end{equation}
and
\begin{equation}\label{jump2} 
\lim_{\delta \to 0^+} \frac {1}{\delta^N} \int_{\{ y \in Q_{\nu}(x,\delta) : (y-x)\cdot\nu  < 0 \}} | u(y) - b| \,\de y = 0.
\end{equation}
The triple $(a,b,\nu)$ is uniquely determined by \eqref{jump1} and \eqref{jump2}, up to permutation of $(a,b)$ and a change of sign of $\nu$, and it is denoted by $\left(u^+ (x),u^- (x),\nu(u) (x)\right)$.

If $u \in BV(\Omega;\R{d})$ it is well known that $S(u)$ is countably $(N-1)$-rectifiable, i.e.,
$$S(u) = \bigcup_{n=1}^{\infty}K_n \cup K_0,$$
where ${\mathcal H}^{N-1}(K_0) = 0$ and $K_n$ are compact subsets of $C^1$ hypersurfaces.
Furthermore,  ${\mathcal H}^{N-1}((\Omega\setminus \Omega_u) \setminus S(u)) = 0$ and the following decomposition holds
$$Du= \nabla u \mathcal L^N \res \Omega + [u] \otimes \nu(u) {\mathcal H}^{N-1}\res S(u) + D^c u,$$
where $[u]:= u^+ - u^-$ and $D^c u$ is the Cantor part of the measure $Du,$ i.e., $D^c u= D^{s}u\res (\Omega_u)$.

The space of \emph{special functions of bounded variation}, $SBV(\Omega; \R d)$, introduced in \cite{DGA} to study free discontinuity problems, is the space of functions $u \in BV(\Omega; \R d)$ such that $D^cu = 0$, i.e. for which
$$ Du = \nabla u \cL^N + [u] \otimes \nu(u)  \cH^{N-1} \res S(u).$$
We next recall some properties of BV functions used in the sequel. We start with the following lemma whose proof can be found in \cite{CF}.
\begin{lemma}\label{ctap}
Let $u \in BV(\Omega; \R d)$. There exists a sequence of piecewise constant functions ${u_n}\in SBV(\Omega;\R{d})$  such that $u_n \to u$ in $L^1(\Omega; \R d)$ and
 $$\lVert Du\rVert(\Omega) = \lim_{n\to \infty}\lVert Du_n\rVert(\Omega) = \lim_{n\to \infty} \int_{S (u_n)} |[u_n](x)|\; \de{\mathcal H}^{N-1}(x).$$
 \end{lemma}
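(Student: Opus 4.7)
\emph{Plan.} The aim is to construct a sequence $\{u_n\}$ of piecewise constant $SBV$ functions approximating $u\in BV(\Omega;\R d)$ with strict convergence of the total variations. Observe first that the second equality in the statement is automatic: for any piecewise constant $u_n\in SBV(\Omega;\R d)$, the absolutely continuous gradient $\nabla u_n$ vanishes and, since $u_n\in SBV$, the Cantor part is zero, so $\|Du_n\|(\Omega)=\int_{S(u_n)}|[u_n]|\,\de\mathcal{H}^{N-1}$ by the $SBV$ decomposition recalled at the start of Section \ref{sect:BV}. Only the first equality requires work.

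The strategy is a two-stage approximation. In the first stage I would invoke the classical Anzellotti--Giaquinta density theorem (a consequence of partition-of-unity applied to mollifications of $u$ on an exhausting sequence of open subsets of $\Omega$) to obtain smooth functions $v_m\in C^\infty(\Omega;\R d)\cap BV(\Omega;\R d)$ with $v_m\to u$ in $L^1(\Omega;\R d)$ and $\|Dv_m\|(\Omega)\to\|Du\|(\Omega)$. This reduces the problem to approximating a smooth $BV$ function by a piecewise constant one in the strict topology.

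In the second stage, for fixed smooth $v=v_m$, I would partition $\Omega$ into cubes $\{Q_i\}$ of side $\delta$ and set
\begin{equation}
v_\delta(x) := \frac{1}{|Q_i|}\int_{Q_i}v(y)\,\de y \qquad\text{for } x\in Q_i.
\end{equation}
Then $v_\delta\in SBV(\Omega;\R d)$ is piecewise constant, and the $L^1$ convergence $v_\delta\to v$ as $\delta\to 0$ is a direct consequence of Lebesgue's differentiation theorem. For the convergence of total variations, on each face $F$ shared by two adjacent cubes $Q_i,Q_j$ one has the Taylor expansion $[v_\delta]=\delta\,\nabla v(x_{ij}^*)\nu_{ij}+o(\delta)$, so that $\sum_F\int_F|[v_\delta]|\,\de\mathcal{H}^{N-1}$ is (up to lower-order terms) a Riemann sum converging to an integral of $|\nabla v|$ over $\Omega$. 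A diagonal argument in $(m,\delta)$ would then extract the desired sequence $\{u_n\}$.

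The main obstacle I anticipate is obtaining the sharp limit value $\|Dv\|(\Omega)=\int_\Omega|\nabla v|_F\,\de x$ in the vector-valued setting, because a naive axis-aligned cubic partition produces only the anisotropic $\ell^1$-type quantity $\sum_{k=1}^N\int_\Omega|\partial_k v|\,\de x$, which in general strictly dominates the Frobenius-norm integral. To recover the isotropic norm I would either average over rotated grids (using the isotropy formula for the Euclidean norm as an integral of one-dimensional projections over $\mathbb{S}^{N-1}$) or replace the domain partition by a level-set partition of the range of $v$ combined with the vectorial coarea-type identity, both of which restore the sharp constant and yield $\|Dv_\delta\|(\Omega)\to\|Dv\|(\Omega)$.
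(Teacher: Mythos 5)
The paper does not prove this lemma; it simply refers the reader to \cite{CF} for the argument, so there is no in-paper proof to compare against. Your two-stage plan (pass to smooth functions via Anzellotti--Giaquinta, then to cube-averaged piecewise constants) is a natural first attempt, and you have correctly diagnosed the anisotropy defect: the axis-aligned grid produces $\sum_{k=1}^N\int_\Omega|\partial_k v|\,\de x$ in the limit, which strictly exceeds $\int_\Omega|\nabla v|\,\de x$ whenever $\nabla v$ has rank at least two. Unfortunately, neither of the remedies you propose actually closes this gap, so the proposal is incomplete at precisely the step that requires the real work.

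The rotated-grid idea rests on the isotropy identity $|a|=c_N\int_{\S{N-1}}|a\cdot\xi|\,\de\cH^{N-1}(\xi)$, which is valid for \emph{vectors} $a\in\R{N}$ but not for matrices: for $A\in\R{d\times N}$ with $d,N\geq 2$, the average $\int_{\S{N-1}}|A\xi|\,\de\cH^{N-1}(\xi)$ is not a fixed multiple of the Frobenius norm (in $d=N=2$, normalizing by $|A|$, one gets $4$ for $A=e_1\otimes e_1$ but $2\pi/\sqrt 2\approx 4.44$ for $A=I_2$). Likewise, there is no vectorial coarea identity reproducing $\int_\Omega|\nabla v|\,\de x$; componentwise range quantization gives $\sum_i\int_\Omega|\nabla v^i|\,\de x$, again an overestimate. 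There is in fact a structural obstruction you should confront head-on: the polar density of $Du_n$ for any piecewise constant $u_n\in SBV$ is a rank-one matrix $[u_n]\otimes\nu(u_n)$, on which the Frobenius and nuclear norms coincide, so Reshetnyak's lower semicontinuity theorem applied with the convex, positively one-homogeneous nuclear norm $\|\cdot\|_*$ yields $\liminf_n\|Du_n\|(\Omega)\geq\int_\Omega\|\nabla u\|_*\,\de x+\|D^s u\|(\Omega)$, and $\|\cdot\|_*$ strictly dominates the Frobenius norm on matrices of rank $\geq 2$. Any correct construction must therefore exploit the rank-one structure of the singular part of $Du$ (guaranteed by Alberti's rank-one theorem) and treat the absolutely continuous part by a fundamentally different device than cube averaging---or else the statement must be interpreted with a different matrix norm. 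Before attempting to complete the argument, I would re-examine the precise statement and normalization used in \cite{CF}, since your present strategy cannot reach the Frobenius-norm total variation when $\nabla u$ has rank $\geq 2$ on a set of positive measure.
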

The next result is a Lusin-type theorem for gradients due to  Alberti \cite{AL} and is essential to our arguments.
\begin{theorem}\label{Al}
Let $f \in L^1(\Omega; \R{d{\times} N})$. 
Then there exist $u \in SBV(\Omega; \R d)$ and a Borel function $g: \Omega\to\R{d{\times} N}$ such that
$$ Du = f {\cL}^N + g {\mathcal H}^{N-1}\res S(u),$$
$$ \int_{S(u)} |g| \; \de \cH^{N-1}(x) \leq C \lVert f\rVert_{L^1(\Omega; \R{d {\times} N})},$$
for some constant $C>0$.
Moreover, $\lVert u\rVert_{L^1(\Omega;\R d)} \leq 2 C\lVert f\rVert_{L^1(\Omega; \R{d {\times}N})}$.
\end{theorem}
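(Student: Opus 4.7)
The plan is to prove Alberti's theorem by a two-step reduction followed by an explicit geometric construction on a fine grid, and then a summation argument for general $L^1$ data. First I would note that the statement is linear in $f$, so by writing $f=\sum_{i=1}^{d}\sum_{j=1}^{N} f_{ij}\,e_i\otimes e_j$ and constructing a scalar SBV function $u_{ij}$ realizing each $f_{ij}\,e_j$, one can take $u=\sum_{i}\bigl(\sum_{j}u_{ij}\bigr)e_i$ and obtain both the pointwise identity $\nabla u=f$ and the jump bound by summing. This reduces matters to the scalar case $d=1$ with $f=\varphi\,e_N$ for some $\varphi\in L^1(\Omega)$ (after a coordinate rotation). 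By density, it further suffices to handle $\varphi\in C^\infty_c(\Omega)$, provided one then uses a telescoping series to recover the $L^1$ case while preserving the quantitative estimates.

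For the core construction with smooth $\varphi$ and a small parameter $h>0$, partition $\Omega$ (enclosed in a larger cube if necessary) by a grid $\{\omega_\alpha\times(t_k,t_{k+1})\}$, where the $\omega_\alpha\subset\R{N-1}$ are cubes of side $h$ and the heights $t_{k+1}-t_k$ are of order $\delta\ll h$. On each cell $Q_{\alpha,k}$, freeze the transverse variable at a reference point $y_\alpha\in\omega_\alpha$ and define
\[
u(x):=\int_{t_k}^{x_N}\varphi(y_\alpha,s)\,\de s\qquad\text{for }x\in Q_{\alpha,k},
\]
so that $\nabla u=\varphi(y_\alpha,x_N)\,e_N$ pointwise on $Q_{\alpha,k}$. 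The jump set $S(u)$ lies in the union of the horizontal faces $\{x_N=t_k\}$ (from resetting the primitive) and the vertical walls $\partial\omega_\alpha\times(t_k,t_{k+1})$ (from mismatched reference points). By Fubini, the total jump mass is controlled by $\|\varphi\|_{L^1}$ up to a dimensional constant, and the $L^1$-bound on $u$ follows similarly from $|u(x)|\le\int_{t_k}^{x_N}|\varphi|\,\de s$.

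To remove the frozen-reference error and obtain $\nabla u=\varphi e_N$ \emph{exactly}, I would iterate: apply the construction to the smooth residual $\varphi-\varphi(y_\alpha,\cdot)$ on a refined grid and telescope, choosing the refinement scales $h_n,\delta_n\to0$ so that $\sum_n\|\varphi-\varphi_n\|_{L^1}\le 2\|\varphi\|_{L^1}$. Summing the resulting SBV functions (the SBV class is closed under such $L^1$-summable series thanks to the Ambrosio compactness theorem in \cite{AFP}) yields $u\in SBV(\Omega;\R d)$ with $\nabla u=f$ a.e.\ and $\int_{S(u)}|g|\,\de\cH^{N-1}\le C\|f\|_{L^1}$. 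The same telescoping scheme handles passage from smooth to general $f\in L^1$.

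The main obstacle is the combination of three simultaneous constraints: (a) the absolutely continuous part of $Du$ must equal $f$ \emph{exactly} a.e., not merely in $L^1$-norm; (b) the measure $Du$ must have no Cantor part, forcing the approximation error at every refinement level to be absorbed into countably many jumps rather than diffused into a singular continuous part; and (c) the total jump mass must remain controlled by $C\|f\|_{L^1}$ uniformly in the refinement. Balancing these three requires the careful choice of refinement scales $(h_n,\delta_n)$ and the sharp estimate of the jump contributions on horizontal versus vertical interfaces at each stage — this bookkeeping is the technical heart of Alberti's original argument in \cite{AL}.
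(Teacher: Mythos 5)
The paper states Theorem~\ref{Al} without proof, citing Alberti's article \cite{AL}; there is therefore no internal argument to compare yours against. Your sketch does capture the broad strategy of Alberti's construction: reduce by linearity to a scalar density supported in a single coordinate direction, partition into thin column--slices, build a local primitive on each cell whose transverse gradient is killed by freezing the cross-sectional variable, control the horizontal jumps (from resetting the primitive) by $\lVert f\rVert_{L^1}$ and the vertical jumps (from mismatched reference points) by $\mathrm{Lip}(f)\cdot(\text{slice height})$, and then iterate on the residual with shrinking scales $(h_n,\delta_n)$ so that the absolutely continuous part equals $f$ exactly a.e.\ and the jump masses sum geometrically.

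One step you invoke is wrong as stated: that ``the SBV class is closed under such $L^1$-summable series thanks to the Ambrosio compactness theorem in \cite{AFP}.'' Ambrosio's SBV compactness theorem requires a uniform superlinear bound on jump amplitudes and an $L^p$-bound, $p>1$, on the absolutely continuous gradients; neither is available here (you only control $\sum_n\lVert Du_n\rVert$). The correct closing argument is a direct structure argument: if $u_n\in SBV$ and $\sum_n\lVert Du_n\rVert<\infty$, then $u:=\sum_n u_n\in BV$ with $Du=\sum_n Du_n$, the singular part $D^su=\sum_n [u_n]\otimes\nu(u_n)\,\cH^{N-1}\res S(u_n)$ is concentrated on the countably $\cH^{N-1}$-rectifiable, $\sigma$-finite set $\bigcup_n S(u_n)$, and since the Cantor part of a $BV$ derivative vanishes on every $\cH^{N-1}$-$\sigma$-finite set while $D^su$ vanishes off that union, one gets $D^cu=0$, i.e.\ $u\in SBV$. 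You would also need to mollify the residual at each stage (since $\varphi-\varphi(y_\alpha,\cdot)$ jumps across cell walls and is not smooth) and absorb the mollification error into the next residual; you are right that the resulting bookkeeping of the scales $(h_n,\delta_n)$ and of the Lipschitz constants of the mollified residuals is the genuine technical burden of Alberti's proof, and it is not carried out here.
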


\subsection{$\Gamma$-convergence and relaxation}\label{Gc}
We recall now the basics of $\Gamma$-convergence: this is a notion of convergence, introduced by De Giorgi and Franzoni \cite{DGF}, which is useful in the calculus of variations.
It allows one to study the convergence of (sequences of) variational functions by identifying their variational limit.
One of the most important products of the theory of $\Gamma$-convergence is the convergence of minima (see Remark \ref{PGC}).
We refer the reader to \cite{Braides,DM} for treatises on the topic and we collect here the most important definitions and results.

Let $X$ be a metric space and let $\{F_n\}$ be a sequence of functions $F_n\colon X\to\overline{\R{}}$ with values in the extended reals $\overline{\R{}}$.
\begin{definition}[{\cite[Definition 1.5]{Braides}}]\label{GammaCVG}
We say that the sequence $\{F_n\}$ \emph{$\Gamma$-converges} in $X$ to $F\colon X\to\overline{\R{}}$ if for all $x\in X$ we have
\begin{itemize}
\item[(i)] ($\liminf$ inequality) for every sequence $\{x_n\}$ converging to $x$
\begin{equation}\label{BGLI}
F(x)\leq\liminf_{n\to\infty} F_n(x_n);
\end{equation}
\item[(ii)] ($\limsup$ inequality) there exists a sequence $\{x_n\}$ converging to $x$ such that
\begin{equation}\label{BGLS}
F(x)\geq\limsup_{n\to\infty} F_n(x_n).
\end{equation}
\end{itemize}
The function $F$ is called the \emph{$\Gamma$-limit} of $\{F_n\}$, and we write $F=\Gamma-\lim_{n\to\infty} F_n$.
\end{definition}
When $X$ is an arbitrary topological space (in particular, it is not a metric space), a more general, \emph{topological}, definition of $\Gamma$-convergence can be given in terms of the topology of $X$.
We refer the reader to \cite[Section 1.4]{Braides} and \cite[Definition 4.1]{DM} for the details.

It is not difficult to see that inequalities \eqref{BGLI} and \eqref{BGLS} imply that 
\begin{equation}\label{EQinfima}
F(x)=\inf\Big\{\liminf_{n\to\infty} F_n(x_n): x_n\to x\Big\}=\inf\Big\{\limsup_{n\to\infty} F_n(x_n): x_n\to x\Big\},
\end{equation}
stating that the $\Gamma$-limit exists if and only if the two infima in \eqref{EQinfima} are equal.
Other equivalent definitions can be found, for instance, in \cite[Theorem 1.17]{Braides}; moreover, the first infimum in \eqref{EQinfima} justifies the following definition.
\begin{definition}[{\cite[Definition 1.24]{Braides}}]\label{defLGL}
Let $F_n\colon X\to\overline{\R{}}$ and let $x\in X$.
The quantity
\begin{equation}\label{eqLGL}
\Gamma-\liminf_{n\to\infty} F_n(x):=\inf\Big\{\liminf_{n\to\infty} F_n(x_n): x_n\to x\Big\}
\end{equation}
is called the \emph{$\Gamma$-lower limit} of the sequence $\{F_n\}$ at $x$.
\end{definition}
The $\Gamma$-lower limit defined in \eqref{eqLGL} is useful to treat relaxation in the framework of $\Gamma$-convergence.
Recall that the operation of relaxation is useful to treat functionals that are not lower semicontinuous - and therefore the direct method of calculus of variations cannot be applied to minimize them.
Relaxing a function means to compute its lower semicontinuous envelope.
\begin{definition}[{\cite[Definition 1.30]{Braides}}]\label{defSCE}
Let $F\colon X\to\overline{\R{}}$ be a function.
Its \emph{lower semicontinuous envelope} $\sce F$ is the greatest lower semicontinuous function not greater than $F$, that is, for every $x\in X$
\begin{equation}\label{eqSCE}
\sce F(x):=\sup\{G(x): G\text{ is lower semicontinuous and }G\leq F\}.
\end{equation}
\end{definition}
In view of \cite[Proposition 1.31]{Braides} and \cite[Remark 4.5]{DM}, relaxation is equivalent to computing the $\Gamma$-limit of a constant sequence of functions, \emph{i.e.}, $F_n=F$ for all $n$.
%Finally
\begin{proposition}[{see \cite[Proposition 1.32]{Braides}}]\label{B132}
We have $\Gamma-\liminf_{n\to\infty} F_n=\Gamma-\liminf_{n\to\infty} \sce F_n$.
\end{proposition}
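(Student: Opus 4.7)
The plan is to prove the equality of $\Gamma$-lower limits by showing the two inequalities separately. The easy direction is $\Gamma-\liminf_{n\to\infty}\sce F_n(x) \leq \Gamma-\liminf_{n\to\infty} F_n(x)$: since $\sce F_n \leq F_n$ pointwise by the very definition \eqref{eqSCE}, for every sequence $x_n\to x$ we have $\liminf_{n\to\infty}\sce F_n(x_n) \leq \liminf_{n\to\infty} F_n(x_n)$, and taking the infimum over all such sequences yields the inequality at once.

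The substantive direction is $\Gamma-\liminf_{n\to\infty} F_n(x) \leq \Gamma-\liminf_{n\to\infty}\sce F_n(x)$. I would first recall the metric characterization of the lower semicontinuous envelope, namely $\sce F_n(y) = \inf\bigl\{\liminf_{k\to\infty} F_n(z_k) : z_k\to y\bigr\}$, which in particular gives a sequence $z_{n,k}\to y$ (as $k\to\infty$) along which $F_n(z_{n,k})$ approaches $\sce F_n(y)$.

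Fix now an arbitrary sequence $x_n\to x$. For each $n$, pick a sequence $z_{n,k}\to x_n$ with $F_n(z_{n,k})\to\sce F_n(x_n)$ as $k\to\infty$, and extract diagonally an index $k_n$ so that $y_n:=z_{n,k_n}$ satisfies $d(y_n,x_n)<1/n$ together with $F_n(y_n)\leq \sce F_n(x_n)+1/n$ when $\sce F_n(x_n)\in\R{}$ (and $F_n(y_n)\leq -n$ when $\sce F_n(x_n)=-\infty$; the case $\sce F_n(x_n)=+\infty$ is vacuous). Then $y_n\to x$ by the triangle inequality, so $y_n$ is admissible in \eqref{eqLGL} for $F_n$, and
\begin{equation*}
\Gamma-\liminf_{n\to\infty} F_n(x) \leq \liminf_{n\to\infty} F_n(y_n) \leq \liminf_{n\to\infty}\sce F_n(x_n).
\end{equation*}
Taking the infimum over all admissible $\{x_n\}$ yields $\Gamma-\liminf_{n\to\infty} F_n(x) \leq \Gamma-\liminf_{n\to\infty}\sce F_n(x)$, which combined with the easy direction concludes the proof.

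The only real subtlety is the diagonal extraction, in particular handling values $\pm\infty$ cleanly; this is routine. Should one wish to work in a general topological space rather than a metric one as stated, the diagonalization would instead be phrased via filters or nets, but in the metric setting adopted in Definition \ref{GammaCVG} the argument above suffices.
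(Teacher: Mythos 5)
Your proof is correct. The paper states this proposition without proof, simply citing \cite[Proposition 1.32]{Braides}, so there is no in-paper argument to compare against; your sequential diagonalization — extracting $y_n\to x$ with $\liminf_n F_n(y_n)\le\liminf_n \sce F_n(x_n)$ from the sequential characterization of $\sce F_n$ — is the standard proof in the metric-space setting adopted in Definition~\ref{GammaCVG}, and you handle the extended-real cases of the diagonal extraction carefully. One remark: the second inequality in your display is best justified along a subsequence realizing $\liminf_n\sce F_n(x_n)$ rather than termwise, since when $\sce F_n(x_n)=-\infty$ there is no pointwise bound to pass through; but your choice $F_n(y_n)\le -n$ in that case makes the subsequential argument go through. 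An equally clean alternative, which avoids these case distinctions and extends verbatim to general topological spaces, is to use the neighborhood characterization $\Gamma\text{-}\liminf_n F_n(x)=\sup_{U\ni x \text{ open}}\liminf_n\inf_U F_n$ together with the elementary fact that $\inf_U F_n=\inf_U\sce F_n$ for every open $U$; this is likely closer to the route in \cite{Braides}.
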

In view of the previous proposition, the left-hand path and the right-hand path described in the Introduction consist in the computation of two $\Gamma$-lower limits, where the order is exchanged.
For simplicity, in the paper we will use the words ``$\Gamma$-lower limit of a family of functionals'' and ``relaxation of energies'' interchangeably. 
%While the former is more popular in the mathematical community, the latter is more popular in the mechanical community.

\begin{remark}\label{PGC}
Among the properties that make $\Gamma$-convergence a suitable tool for the study of the convergence of functional and related variational problems, three are particularly useful, namely 
\begin{itemize}
\item the \emph{compactness of $\Gamma$-convergence} (see \cite[Section 1.8.2]{Braides}, \cite[Chapter 8]{DM}). 
In particular, for each sequence of functions $F_n\colon X\to\overline{\R{}}$  the compactness property grants the existence of a $\Gamma$-convergent subsequence, provided $X$ has a countable base \cite[Theorem 8.5]{DM}.
Then, it is not difficult to imagine that the choice of the topology in the convergences that define the relaxed functionals $\cF_{3d,2d}$, $\cF_{3d,2d,SD}$, $\cF_{3d,SD}$, and $\cF_{3d,SD,2d}$ below (see \eqref{107a}, \eqref{200}, \eqref{2018}, and \eqref{2019}, respectively) is made in order to obtain good compactness properties.
\item the \emph{stability under continuous perturbations} (see \cite[Remark 1.7]{Braides}, \cite[Proposition 6.21]{DM}, and also \cite[Proposition 3.7]{DM} for the relaxation): if $\widetilde F\colon X\to\R{}$ is a continuous function, then
\begin{equation}\label{CP}
\Gamma-\liminf_{n\to\infty}(F_n+\widetilde F)=\Gamma-\liminf_{n\to\infty} F_n+ \widetilde F,\quad
\Gamma-\limsup_{n\to\infty}(F_n+\widetilde F)=\Gamma-\limsup_{n\to\infty} F_n+ \widetilde F,
\end{equation}
so that if $\{F_n\}$ $\Gamma$-converges to $F$ in $X$, then $\{F_n+\widetilde F\}$ $\Gamma$-converges to $F+\widetilde F$ in $X$.
\item the implications regarding the \emph{convergence of minima and minimizers}.
The results contained in \cite[Section 1.5]{Braides} and \cite[Chapter 7]{DM}
%Theorems 7.4, 7.8, and 7.23 and Corollary 7.24 in \cite{DM} 
give conditions under which the $\Gamma$-convergence of a sequence of functions $F_n$ to their $\Gamma$-limit $F$ implies the convergence of the minimima
\begin{equation}\label{CMM}
\min_{x\in X} F(x)=\lim_{n\to\infty} \inf_{x\in X} F_n(x)
\end{equation}
and of the minimizers: if $\{F_n\}$ is equi-coercive and $\Gamma$-converges to $F$, with a unique minimum point $x_0\in X$, and if $\{x_n\}\subset X$ is a sequence such that $x_n$ is an $\eps_n$-minimizer for $F_n$ in $X$ for every $n$, and with $\eps_n\to0^+$, then $x_n\to x_0$ in $X$ and $F_n(x_n)\to F(x_0)$.
We direct the reader to \cite{Braides,DM} for a precise statement of the notions of equi-coercivity and $\eps$-minimizer (albeit they are quite natural to understand).
\end{itemize}
We will not make use of the last two properties of $\Gamma$-convergence in this paper.
We think it is worthwhile mentioning them in the spirit of a variational treatment of the minimization of the relaxed functionals that we obtain in our results, with the hope to indicate to the reader that the theorems exposed and proved in the sequel can provide a starting point for the study of equilibrium configurations of thin structures in the framework of structured deformations.
\end{remark}

\begin{remark}
All the definitions and results presented above can be generalized to the case of families of functionals, indexed by a continuous parameter $\eps$.
A family of functions $\{F_\eps\}$ $\Gamma$-converges in $X$ to $F\colon X\to \overline{\R{}}$ as $\eps\to0^+$ if, for every sequence $\eps_n\to0^+$, the functions $\{F_{\eps_n}\}$ $\Gamma$-converge to $F$ in the sense of Definition \ref{GammaCVG} (see, \emph{e.g.}, \cite[Section 1.9]{Braides}).
\end{remark}

%The \emph{$\Gamma$-lower limit} and the \emph{$\Gamma$-upper limit} of the sequence $(F_h)$ are the functions from $X$ into $\overline{\R{}}$ defined by
%\begin{equation}\label{DM41}
%\big(\Gamma-\liminf_{h\to\infty} F_h\big)(x):=\sup_{U\in\cN(x)} \liminf_{h\to\infty} \inf_{y\in U} F_h(y);\quad
%\big(\Gamma-\limsup_{h\to\infty} F_h\big)(x):=\sup_{U\in\cN(x)} \limsup_{h\to\infty} \inf_{y\in U} F_h(y).
%\end{equation}
%If there exists a function $F\colon X\to\overline{\R{}}$ such that $\Gamma-\liminf_{h\to\infty} F_h=\Gamma-\limsup_{h\to\infty} F_h=F$, then we write $F=\Gamma-\lim_{h\to\infty} F_h$ and we say that $(F_h)$ \emph{$\Gamma$-converges} to $F$ in $X$.

\section{The left-hand path}\label{sect:LHS}
In this section we relax our initial energy \eqref{E3d} by first doing dimension reduction and then by incorporating structured deformations.
\subsection{Dimension reduction}\label{LHSDR}
In order to perform dimension reduction, we resort to the classical approach of rescaling the spatial variable by dividing $x_3$ by $\eps$ and integrating over the rescaled domain $\Omega=\omega\times(-1/2,1/2)$. 
We also rescale the functional \eqref{E3d} by $\eps$, defining $F_{\eps}(u):=\tfrac1\eps E_{\eps}(u)$, so that we have
\begin{equation}\label{107}
F_{\eps}(u)=\int_{\Omega} W_{3d}\bigg(\nabla_{\alpha}u\bigg|\frac{\nabla_{3}u}{\eps}\bigg)\de x+\int_{\Omega\cap S(u)} h_{3d}\bigg([u],\nu_{\alpha}(u)\bigg|\frac{\nu_{3}(u)}{\eps}\bigg)\de \mathcal{H}^{2}(x). %
\end{equation}

Let now $(\overline{u},\overline{d})\in SBV(\omega;\mathbb{R}^{3})\times L^{p}(\omega;\mathbb{R}^{3})$, let $\eps_n\to0^+$, and define the relaxed functional
\begin{equation}\label{107a}
\begin{split}
\cF_{3d,2d}(\overline{u},\overline{d}):=\inf & \bigg\{\liminf_{n\to\infty} F_{\eps_n}(u_n):  u_{n}\in SBV(\Omega;\mathbb{R}^{3}),\; u_{n}\rightarrow\overline{u}\text{ in }L^{1}(\Omega;\R3), \\
& \quad\int_{I}\frac{\nabla_{3}u_{n}}{\eps_{n}}\,\de x_{3}\rightharpoonup \overline{d} \text{ in }L^{p}(\omega;\R3),\; \nu(u_{n})\cdot e_{3}=0\bigg\}.
\end{split}
\end{equation}
%\red{In view of Proposition \ref{B132}, the functional $\cF_{3d,2d}$ defined above is a $\Gamma$-lower limit (see Definition \ref{defLGL}).}
In writing the convergence $u_n\to\cl u$ in $L^1(\Omega;\R3)$ in formula \eqref{107a}, it is understood that $\cl u$ is extended to a function on $\Omega$ which is independent of $x_3$.
%\red{The choice of the convergences in \eqref{107a} is now clear in light of Remark \ref{PGC}.}
As stated in Remark \ref{energies}, the coercivity assumption \eqref{coerc} grants boundedness of the gradients in $L^p$, so that $\eps_n^{-1}\nabla_{3}u_{n}\wto d$ in $L^p$, where $d$ may still depend on the $x_3$ variable.
By contrast, following the model in \cite{BFM}, we consider the weak convergence of the average with respect to the third variable to a field $\overline d(x_\alpha)$ depending only on the coordinates in the cross--section $\omega$.

\begin{theorem}\label{T3d2d}
Under the hypotheses $(H_1)$--$(H_4)$,
let $(\cl{u},\cl{d}) \in SBV(\omega;\mathbb{R}^{3}) \times L^{p}(\omega ;\mathbb{R}^{3})$.
Then every sequence $\eps_n\to0^+$ admits a subsequence such that
\begin{equation}\label{F3d2d}
\cF_{3d,2d}(\cl{u},\cl{d}) =\int_{\omega}W_{3d,2d}(\nabla \cl{u},\cl{d})\,\de x_{\alpha}+\int_{\omega \cap S(\cl{u})}h_{3d,2d}([\cl{u}],\nu(\cl{u}))\,\de\mathcal{H}^{1}(x_\alpha),
\end{equation}
where  $W_{3d,2d}\colon \mathbb{R}^{3\times2}\times \mathbb{R}^3\to \lbrack0,+\infty)$ and $h_{3d,2d}\colon\mathbb{R}^3\times \mathbb{S}^1\to \lbrack0,+\infty)$ are given by 
\begin{equation}\label{107b}
\begin{split}
W_{3d,2d}(A,d)= & \inf\bigg\{\int_{Q^{\prime}}W_{3d}(\nabla_{\alpha}u|z)\,\de x_{\alpha}+\int_{Q^{\prime}\cap S(u)}h_{3d}([u],\tilde\nu(u))\,\de\mathcal{H}^{1}(x_\alpha):  \\
& \qquad u\in SBV(Q^{\prime};\mathbb{R}^{3}),~z\in L^p_{{Q'}-\operatorname*{per}}(\mathbb{R}^{2};\mathbb{R}^{3}),~u|_{\partial
	Q^{\prime}}(x_{\alpha})=Ax_{\alpha},  \int_{Q^{\prime}}z\,\de x_{\alpha}=d\bigg\}  ,
\end{split}
\end{equation} 
and, for $\lambda\in\R3$, $\eta\in\mathbb{S}^1$,
\begin{equation}\label{107c}
\begin{split}
h_{3d,2d}(\lambda,\eta)= &\inf\bigg\{\int_{Q^{\prime}_{\eta}\cap S(u)}h_{3d}([u],\tilde\nu(u))\; \de\mathcal{H}^{1}(x_\alpha): u\in SBV(Q_{\eta}^{\prime};\mathbb{R}^{3}), \\
&\qquad u|_{\partial Q_{\eta}^{\prime}}(x_{\alpha})=\gamma_{\lambda,\eta}(
x_{\alpha})  ,~\nabla u=0, \text{ a.e.}
\bigg\}
\end{split}
\end{equation}
with
\begin{equation}\label{gammale}
\gamma_{\lambda,\eta}(x_{\alpha}):=
\begin{cases}
\lambda & \text{if $0\leq x_{\alpha}\cdot\eta<\frac{1}{2}$},\\
0 & \text{if $-\frac{1}{2}<x_{\alpha}\cdot\eta<0$}.
\end{cases}
\end{equation}
\end{theorem}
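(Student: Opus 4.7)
The plan is to follow the localization and blow-up scheme of Fonseca--M\"uller \cite{FM} in the $SBV$ setting, adapted to incorporate the constraint on the $x_3$-average of $\nabla_3 u_n/\eps_n$ in the spirit of \cite{BFM,BF2001}. First I localize by defining $\cF_{3d,2d}(\cl u, \cl d; A)$ for every open $A \subset \omega$ through the same infimum as in \eqref{107a} but with $A \times I$ in place of $\Omega$. Using $(H_1)$, $(H_2)$, and \eqref{pgrowth} (the upper bound obtained from an $SBV$ competitor of the form $\cl u_\sigma(x_\alpha) + \eps_n x_3 \cl d_\sigma(x_\alpha)$, with $\cl u_\sigma$ a piecewise-constant approximation via Lemma \ref{ctap} and $\cl d_\sigma$ a smooth approximation of $\cl d$), one obtains uniform-in-$A$ bounds. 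The compactness of $\Gamma$-convergence then allows extraction of a subsequence $\eps_n \to 0^+$ (not relabeled) along which $A \mapsto \cF_{3d,2d}(\cl u, \cl d; A)$ coincides on open sets with a nonnegative Radon measure $\mu$ on $\omega$, verified through the De Giorgi--Letta criterion (subadditivity uses cut-off gluing acting only in the $x_\alpha$ variables, so horizontality $\nu(u_n)\cdot e_3 = 0$ is preserved). Decomposing $\mu = \mu^a \cL^2 \res \omega + \mu^j \cH^1 \res (S(\cl u) \cap \omega) + \mu^\perp$, the task reduces to showing $\mu^a \geq W_{3d,2d}(\nabla \cl u, \cl d)$ a.e., $\mu^j \geq h_{3d,2d}([\cl u], \nu(\cl u))$ $\cH^1$-a.e.\ on $S(\cl u)$, and $\mu^\perp = 0$, together with the matching upper bound \eqref{F3d2d}.

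For the lower bound, at a Lebesgue point $x_0 \in \omega$ of $\nabla \cl u$, $\cl d$, and of $\mu^a$, I take a sequence $\{u_n\}$ nearly optimal for $\cF_{3d,2d}(\cl u, \cl d; Q'(x_0, \delta))$ and rescale via $v_n^\delta(y_\alpha, x_3) := [u_n(x_0 + \delta y_\alpha, x_3) - \cl u(x_0)]/\delta$ on $Q' \times I$, setting $z_n^\delta(y_\alpha) := \eps_n^{-1}\int_I \nabla_3 u_n(x_0 + \delta y_\alpha, x_3)\,\de x_3$. A diagonalization in $(n,\delta)$, combined with a De Giorgi-type slicing that adjusts the boundary trace of $v_n^\delta$ to the affine datum $\nabla \cl u(x_0)\, y_\alpha$ and a periodicization of $z_n^\delta$ (both producing only asymptotically negligible error by $(H_1),(H_2)$), yields a competitor $(u,z)$ for \eqref{107b}, hence $\mu^a(x_0) \geq W_{3d,2d}(\nabla \cl u(x_0), \cl d(x_0))$. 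For the surface bound, blow up at $\cH^1$-a.e.\ $x_0 \in S(\cl u)$ on cubes $Q'_{\nu(\cl u)(x_0)}(x_0, \delta)$: under rescaling by $\delta$, the bulk integral in \eqref{107} acquires an extra factor of $\delta$ and vanishes, forcing $\nabla u = 0$ a.e.\ in the limit competitor; the boundary trace converges to $\gamma_{[\cl u](x_0), \nu(\cl u)(x_0)}$; and the constraint $\nu(u_n)\cdot e_3 = 0$ passes to $\tilde\nu = (\nu, 0)$, giving a competitor for \eqref{107c}. The vanishing of $\mu^\perp$ follows from the same scaling estimates at points of zero $\cL^2$- and $\cH^1 \res S(\cl u)$-density.

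For the upper bound, by density (Lemma \ref{ctap} combined with piecewise-affine approximation as in \cite{CF,BFM,BF2001}) it suffices to treat $\cl u$ piecewise affine on a polyhedral decomposition of $\omega$ and $\cl d$ piecewise constant, and then conclude by lower semicontinuity of $\cF_{3d,2d}$ and continuity of the densities \eqref{107b}--\eqref{107c}. On each region of affinity I cover a full-measure subset by Vitali cubes $Q'(x_0, \delta)$; on each such cube I rescale a near-optimizer $(\hat u, \hat z)$ of \eqref{107b}, with $\hat z$ first replaced by a smooth periodic approximation (permissible by \eqref{growth}), and lift via the \emph{vertical ansatz} $u_n(x_\alpha, x_3) := u^{x_0,\delta}(x_\alpha) + \eps_n x_3\, z^{x_0,\delta}(x_\alpha)$. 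This immediately gives $\nabla_3 u_n/\eps_n = z^{x_0,\delta}$, whose cube-average equals $\cl d(x_0)$ (hence the weak constraint in \eqref{107a}), and $\nu(u_n)\cdot e_3 = 0$ since $S(u_n) = S(u^{x_0,\delta}) \times I$; the bulk and surface energies match the target densities. Near $S(\cl u)$ one uses cubes $Q'_{\nu(\cl u)(x_0)}(x_0, \delta)$ with near-optimizers of \eqref{107c} extended trivially in $x_3$; gluing across cubes is performed by cut-offs in $x_\alpha$ only, with errors controlled by the growth estimates. The main obstacle is the simultaneous enforcement on $\{u_n\}$ of (i) $L^1$-convergence to $\cl u$, (ii) the weak constraint $\tfrac1{\eps_n}\int_I \nabla_3 u_n\,\de x_3 \rightharpoonup \cl d$, (iii) horizontality $\nu(u_n)\cdot e_3 = 0$, and (iv) matching of both densities: condition (iii) forbids the standard convex-combination gluings employing $x_3$-dependent cut-offs and forces all adjustments to act only in the $x_\alpha$ variables, which in turn dictates the linear-in-$x_3$ form of the ansatz and explains the appearance of the $Q'$-periodic field $z$ in the cell problem \eqref{107b}.
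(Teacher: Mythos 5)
Your plan is the Fonseca--M\"uller blow-up scheme, which is precisely the backbone of the paper's proof, and your lower-bound steps (blow-up at Lebesgue points and at jump points, rescaling of $x_\alpha$ by $\delta_k$, diagonalization to obtain $\eps'_k:=\eps_{n(k)}/\delta_k\to 0$, adjustment of the boundary trace and of the $z$-average constraint via Alberti's theorem) match the paper's Steps~3--4 essentially exactly. Likewise, your De Giorgi--Letta localization plays the role of Proposition~\ref{569}, which already yields absolute continuity with respect to $\cL^2+\cH^1\res S(\cl u)$ and hence $\mu^\perp=0$ automatically.

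The genuine divergence is in the upper bound. The paper reduces only $\cl d$ to piecewise constant (via Lemma~\ref{ctap}, estimate~\eqref{growth}, and a correction supplied by Alberti's theorem), keeps $\cl u\in SBV(\omega;\R3)$ fully general, and estimates the Radon--Nikod\'{y}m derivatives $\frac{\de\cF_{3d,2d}}{\de\cL^2}(x_0)$ and $\frac{\de\cF_{3d,2d}}{\de\cH^1\res S(\cl u)}(x_0)$ directly with the explicit competitor
$u_{\delta,n}(x_\alpha,x_3)=\cl u(x_\alpha)+\frac{\delta}{n}\zeta\big(\frac{n(x_\alpha-x_0)}{\delta}\big)+\eps_n x_3\big(\cl d(x_\alpha)-\cl d(x_0)+z\big(\frac{n(x_\alpha-x_0)}{\delta}\big)\big)$,
while for the surface bound it first reduces to $\cl u=\lambda\chi_U$ following \cite{AMT} and then constructs a competitor split into the transition layer $D_n$ and the half-cubes $Q^\pm$. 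You instead propose reducing $\cl u$ to piecewise affine on a polyhedral decomposition and $\cl d$ to piecewise constant, then covering each affinity region by Vitali cubes, using the vertical ansatz, and gluing with cut-offs acting only in $x_\alpha$. Your ansatz and your observation that the cut-offs must be $x_3$-independent to preserve $\nu(u_n)\cdot e_3=0$ are both correct and consonant with the paper. But the density step you invoke has a gap: Lemma~\ref{ctap} provides piecewise-\emph{constant}, not piecewise-\emph{affine}, approximations, and passing to the limit requires not only sequential lower semicontinuity of $\cF_{3d,2d}$ (which does hold, as a $\Gamma$-lower limit) but also that the surface integral $\int_{S(\cl u_m)}h_{3d,2d}([\cl u_m],\nu(\cl u_m))\,\de\cH^1$ converge to $\int_{S(\cl u)}h_{3d,2d}([\cl u],\nu(\cl u))\,\de\cH^1$ along your piecewise-affine approximation. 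That requires an honest $SBV$ density theorem in a strict or energy topology together with the properties of $h_{3d,2d}$ from Proposition~\ref{p100}; none of this is stated or cited. The paper's route sidesteps the issue entirely by never approximating $\cl u$ in the bulk step and by using the characteristic-function reduction only for the surface step, so the upper bound holds for general $SBV$ data without importing an additional density theorem.
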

In \eqref{107b} and in the sequel, the notation $z\in L^p_{{Q'}-\operatorname*{per}}(\mathbb{R}^{2};\mathbb{R}^{3})$ means that the function $z$ is defined on the unit square $Q'$ and extended by periodicity to all of $\R2$.

We set the stage for the proof of Theorem \ref{T3d2d} by proving some properties of the energy densities defined by \eqref{107b} and \eqref{107c}, which will be used in the sequel.
\begin{proposition}\label{p100}
Let $W_{3d, 2d}$ and $h_{3d,2d}$ be given by \eqref{107b} and \eqref{107c}, respectively. The following properties hold:
\begin{itemize}
\item [\textit{(i)}] $W_{3d,2d}$ satisfies \eqref{growth}, namely, for each $A,B\in\R{3\times2}$ and $d,e\in\R3$, 
\begin{equation}\label{wishful}
|W_{3d,2d}(A,d)-W_{3d,2d}(B,e)|\leq C|(A|d)-(B|e)|(1+|(A|d)|^{p-1}+|(B|e)|^{p-1});
\end{equation}
\item [\textit{(ii)}] $h_{3d,2d}$ satisfies $(H_2)$--$(H_4)$ and it is Lipschitz continuous with respect to the variable $\lambda$;
\item [\textit{(iii)}] $h_{3d,2d}$ is upper semicontinuous with respect to the variable $\eta$. 
\end{itemize}
\end{proposition}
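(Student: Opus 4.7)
The three statements have independent proofs; I will handle them in order.

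For part \textit{(i)}, the strategy is the usual ``affine translation'' trick. Fix $(A,d)$ and $(B,e)$ and, for any $\delta>0$, pick $(u_B,z_B)$ admissible for $W_{3d,2d}(B,e)$ realizing the infimum up to $\delta$. Set $u_A(x_\alpha):=u_B(x_\alpha)+(A-B)x_\alpha$ and $z_A:=z_B+(d-e)$: the boundary datum and the mean condition in \eqref{107b} are immediately satisfied, and since $u_A-u_B$ is affine the jump sets, jumps, and normals of $u_A$ and $u_B$ coincide, so the surface terms cancel identically. Applying \eqref{growth} pointwise to $(\nabla_\alpha u_A|z_A)=(\nabla_\alpha u_B|z_B)+((A-B)|(d-e))$ and integrating yields
\begin{equation*}
W_{3d,2d}(A,d)-W_{3d,2d}(B,e)-\delta\leq c_W|(A-B|d-e)|\int_{Q'}\bigl(1+|(\nabla_\alpha u_A|z_A)|^{p-1}+|(\nabla_\alpha u_B|z_B)|^{p-1}\bigr)\de x_\alpha.
\end{equation*}
To bound the integral I combine the coercivity \eqref{coerc} (giving $\int_{Q'}|(\nabla_\alpha u_B|z_B)|^p\,\de x_\alpha\leq c_W(W_{3d,2d}(B,e)+\delta)$) with the trivial upper bound $W_{3d,2d}(B,e)\leq W_{3d}(B|e)\leq C(1+|(B|e)|^p)$ obtained from the test pair $(x_\alpha\mapsto Bx_\alpha,\, e)$ and \eqref{pgrowth}. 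A Hölder inequality then converts the $L^p$ bound into the desired $L^{p-1}$ bound, and the estimate for the $u_A$-integrand is reduced to the $u_B$-integrand at the cost of the harmless term $|(A-B|d-e)|^{p-1}\leq C(|(A|d)|^{p-1}+|(B|e)|^{p-1})$. Swapping the roles of the two pairs gives \eqref{wishful}.

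For part \textit{(ii)}, the upper bound in $(H_2)$ is realized by the test function $u=\gamma_{\lambda,\eta}$ itself (admissible by construction, with $\nabla u=0$, a single jump line of $\mathcal{H}^1$-measure $1$, jump $\lambda$, normal $\tilde\eta$). The lower bound uses that any admissible $u$ is an $SBV$ function with $\nabla u=0$ a.e. and prescribed trace on $\partial Q'_\eta$; the divergence theorem applied componentwise (after WLOG rotating so $\eta=e_1$) gives $\int_{S(u)}[u_i]\nu_1\,\de\mathcal H^1=\lambda_i$ for each $i$, so $\int_{S(u)}|[u]|\,\de\mathcal H^1\geq|\lambda|$ and $(H_2)$ of $h_{3d}$ concludes. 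Positive $1$-homogeneity $(H_3)$ is immediate by testing with $tu$ where $u$ is near-optimal for $(\lambda,\eta)$, and subadditivity $(H_4)$ follows from the construction $u:=u_1+u_2$ with $u_j$ near-optimal for $(\lambda_j,\eta)$: the boundary datum $\gamma_{\lambda_1,\eta}+\gamma_{\lambda_2,\eta}=\gamma_{\lambda_1+\lambda_2,\eta}$ matches, $\nabla u=0$ a.e., and at $\mathcal H^1$-a.e.\ point of $S(u_1)\cap S(u_2)$ the two normals coincide (or are opposite, in which case one flips the sign convention), so subadditivity of $h_{3d}(\cdot,\nu)$ gives the bound. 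Lipschitz continuity of $h_{3d,2d}$ in $\lambda$ is a standard consequence: $h_{3d,2d}(\lambda_1,\eta)\leq h_{3d,2d}(\lambda_1-\lambda_2,\eta)+h_{3d,2d}(\lambda_2,\eta)\leq c_h|\lambda_1-\lambda_2|+h_{3d,2d}(\lambda_2,\eta)$, and the symmetric estimate.

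For part \textit{(iii)}, given $\eta_k\to\eta$ let $R_k\in SO(2)$ be rotations with $R_k\eta=\eta_k$ and $R_k\to I$. For each $\delta>0$, choose $u$ admissible for $h_{3d,2d}(\lambda,\eta)$ within $\delta$ of the infimum, and define $u_k(x_\alpha):=u(R_k^{-1}x_\alpha)$ on $Q'_{\eta_k}=R_kQ'_\eta$. Then $\nabla u_k=0$ a.e., and the boundary condition $u_k|_{\partial Q'_{\eta_k}}=\gamma_{\lambda,\eta_k}$ follows from $R_k^{-1}x\cdot\eta\geq0\Leftrightarrow x\cdot\eta_k\geq0$, so $u_k$ is admissible for $h_{3d,2d}(\lambda,\eta_k)$. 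The change of variables yields
\begin{equation*}
\int_{Q'_{\eta_k}\cap S(u_k)}h_{3d}([u_k],\widetilde{\nu(u_k)})\,\de\mathcal H^1=\int_{Q'_\eta\cap S(u)}h_{3d}([u],\widetilde{R_k\nu(u)})\,\de\mathcal H^1,
\end{equation*}
and continuity of $h_{3d}(\lambda,\cdot)$ together with the $\mathcal H^1$-integrable dominant $c_h|[u]|$ (from $(H_2)$) gives, by dominated convergence, that this quantity tends to $\int_{Q'_\eta\cap S(u)}h_{3d}([u],\tilde\nu(u))\,\de\mathcal H^1\leq h_{3d,2d}(\lambda,\eta)+\delta$. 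Hence $\limsup_k h_{3d,2d}(\lambda,\eta_k)\leq h_{3d,2d}(\lambda,\eta)+\delta$, and $\delta\to0$ concludes.

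The step I expect to require the most care is the trace/divergence identity underpinning the lower bound in $(H_2)$, since the admissible functions are in $SBV$ (not $BV\cap L^\infty$ with an extension readily at hand), and the rotation argument in \textit{(iii)}, where admissibility on the rotated cube must be checked carefully for the boundary trace; the remaining pieces are essentially routine given the already-established properties of $h_{3d}$ and $W_{3d}$.
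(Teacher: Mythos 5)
Your proposal is correct, but for part \textit{(i)} it takes a genuinely different route from the paper. The paper's proof is indirect: it views $(A,d)\mapsto W_{3d,2d}(A,d)$ through the $3\times3$ matrix $(A|d)$, deduces quasiconvexity from \cite[Proposition 5.6(i)]{CF}, and then combines this with the $p$-growth \eqref{pgrowth} and Marcellini's result \cite[Theorem 2.1]{M} (quasiconvexity plus $p$-growth implies the $p$-Lipschitz estimate) to obtain \eqref{wishful}. Your affine-translation argument — shifting a near-optimal competitor $(u_B,z_B)$ for $(B,e)$ by $(A-B)x_\alpha$ in $u$ and by $d-e$ in $z$, noting the surface terms cancel identically, and controlling the bulk increment via pointwise \eqref{growth} plus coercivity and H\"older — is more elementary and self-contained, at the cost of a longer computation; the paper's is shorter given the machinery already imported from \cite{CF}. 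For \textit{(ii)} the paper dismisses $(H_2)$--$(H_4)$ as ``standard arguments'' and proves Lipschitz continuity by testing with $v:=u+\gamma_{\lambda_2-\lambda_1,\eta}$ and using subadditivity of $h_{3d}$; your derivation is the same idea factored through explicit proofs of $(H_2)$ and $(H_4)$ for $h_{3d,2d}$. For \textit{(iii)} the paper simply cites \cite[Prop.\@ 3.6]{BBBF}; your rotation-plus-dominated-convergence argument (using $c_h|[u]|$ from $(H_2)$ as the dominant) is a correct self-contained substitute. One small point worth making explicit in your $(H_4)$ step: the orientation bookkeeping on $S(u_1)\cap S(u_2)$ uses the symmetry $h_{3d}(-\lambda,-\nu)=h_{3d}(\lambda,\nu)$, which the paper assumes implicitly in order for the surface integral in \eqref{E3d} to be well-defined.
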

\begin{proof}
\textit{(i)} Let $\Pi_\alpha\colon\R{3\times3}\to\R{3\times2}$ and  $\Pi_3\colon\R{3\times3}\to\R{3}$ be the linear maps which select out the first two and the third columns, respectively, of a matrix $M\in\R{3\times3}$. 
Note that $W_{3d,2d}(A,d)=W_{3d,2d}(\Pi_\alpha M,\Pi_3 M)$, for $M=(A|d)$.
By applying \cite[Proposition 5.6(i)]{CF} with $M\mapsto W_{3d,2d}(\Pi_\alpha M,\Pi_3 M)$ in place of $A\mapsto H_p(A,A)$, we obtain that $(A,d)\mapsto W_{3d,2d}(A,d)$ is quasiconvex (see, \emph{e.g.}, \cite[Section 2]{FM}). 
This, combined with \eqref{pgrowth}, by a standard argument by Marcellini \cite[Theorem 2.1]{M}, implies that $W_{3d,2d}$ satisfies \eqref{wishful}.

\textit{(ii)} Properties $(H_2)$--$(H_4)$ for $h_{3d, 2d}$ follow by standard arguments from \eqref{107c}. 
To prove Lipschitz continuity in the first variable, consider $\lambda_1,\lambda_2\in\R3$, $\eta\in\S1$, and $\rho>0$. 
Let now $u\in SBV(Q'_\eta;\R3)$ be admissible for $h_{3d,2d}(\lambda_1,\eta)$ in \eqref{107c} and be such that 
\begin{equation}\label{57}
h_{3d,2d}(\lambda_1,\eta)+\rho\geq\int_{Q'_\eta\cap S(u)} h_{3d}([u],\tilde\nu(u))\,\de\cH^1(x_\alpha).
\end{equation}
Then, $v := u + \gamma_{\lambda_2-\lambda_1, \eta}$ is an admissible function for the definition of $h_{3d, 2d}(\lambda_2, \eta)$ and, in view of the subadditivity of $h_{3d}$, $(H_2)$, and \eqref{57}, we have that:
\begin{equation*}
\begin{split}
h_{3d,2d}(\lambda_2,\eta) \leq \int_{Q'_\eta\cap S(v)} h_{3d}([v], \tilde\nu(v))\, \de \cH^1(x_\alpha) & \leq \int_{Q'_\eta\cap S(u)} h_{3d}([u],\tilde\nu(u))\, \de\cH^1(x_\alpha) + C|\lambda_2 - \lambda_1| \\
&\leq h_{3d, 2d}( \lambda_1, \eta) + \rho +  C|\lambda_2 - \lambda_1|.
\end{split}
\end{equation*}
Letting $\rho \to 0$ and reversing the roles of $u$ and $v$ we conclude the proof of \textit{(ii)}.

\textit{(iii)} The proof can be found in \cite[Prop.\@ 3.6]{BBBF}. \qed
\end{proof}

We prove next that, for a fixed piecewise constant $\cl d\in L^p(\omega;\R3)$, the functional $\cF_{3d,2d}^{\cl d}(\cl{u}) := \cF_{3d,2d}(\cl{u},\cl{d})$ is the trace of a Radon measure. 
To do this, we follow arguments in \cite{MS}; we start by localizing $\cF_{3d,2d}^{\cl d}(\cl{u})$, i.e., for an open set $A \in \mathcal{A}(\omega)$, $\cl u\colon\omega\to\R3$, and $\eps_n\to0^+$, we define
\begin{equation}\label{557}
\begin{split}
\cF_{3d,2d}^{\overline{d}}(\overline{u};A):=\inf & \bigg\{\liminf_{n\to\infty}\bigg(\!\int_{A \times I}W_{3d}\Big(\nabla_{\alpha}u_{n}\Big|\frac{\nabla_{3}u_{n}}{\eps_{n}}\Big)\de x+\!\int_{(A \times I)\cap S(u_{n})} \!\! h_{3d}\Big([u_{n}],\nu_{\alpha}(u_{n})\Big|\frac{\nu_{3}(u_{n})}{\eps_{n}}\Big)\de\mathcal{H}^{2}(x)\bigg):  \\
& \quad  u_{n}\in SBV(\Omega;\mathbb{R}^{3}),\; u_{n}\rightarrow\overline{u}\text{ in }L^{1},\; \int_{I}\frac{\nabla_{3}u_{n}}{\eps_{n}}\,\de x_{3}\rightharpoonup \overline{d} \text{ in }L^{p},\; \nu(u_{n})\cdot e_{3}=0\bigg\}.
\end{split}
\end{equation}
Notice that the functional defined in \eqref{557} depends on the particular sequence $\{\eps_n\}$ (but for simplicity we do not write it explicitly).
Then we have the following result.
\begin{proposition}\label{569}
Let $W_{3d}\colon{\mathbb{R}}^{3\times3}\rightarrow\lbrack0,+\infty)$ and
$h_{3d}\colon{\mathbb{R}}^{3}\times \S{2}\rightarrow\lbrack0,+\infty)$ be continuous satisfying $(H_1)$ and $(H_2)$ and let $\cl d \in L^p(\omega; \R3)$ be piecewise constant. Any sequence $\eps_n \to 0^+$ admits a subsequence $\eps_k: = \eps_{n(k)}$ such that for $\cl u \in SBV(\omega; \R3)$ the set function $\cF_{3d,2d}^{\overline{d}}(\overline{u};\cdot)$ defined in \eqref{557}, is the trace of a Radon measure on $\cA(\omega)$ which is absolutely continuous with respect to $\mathcal{L}^2 + \mathcal{H}^1\res {S(\cl u)}.$
\end{proposition}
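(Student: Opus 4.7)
The plan is to apply the De Giorgi--Letta criterion to the set function $\cF^{\cl d}_{3d,2d}(\cl u;\cdot)$ on $\cA(\omega)$. It suffices to verify, for this set function, the properties of being non-negative, non-decreasing, inner regular, super-additive on disjoint open sets, and sub-additive; then standard arguments (see for instance \cite{DM}) yield the extension to a Radon measure on the Borel $\sigma$-algebra of $\omega$. Monotonicity and super-additivity on disjoint opens are immediate from the definition \eqref{557}, since restricting a competitor from a larger domain to a smaller one (or taking a disjoint union of competitors) gives the required inequalities directly. Non-negativity is clear from $W_{3d},h_{3d}\geq 0$, and $\cF^{\cl d}_{3d,2d}(\cl u;\emptyset)=0$ holds by convention. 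The substantive properties are inner regularity and sub-additivity.

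The core technical step is the \emph{nested sub-additivity}: for $A',A,B,C\in\cA(\omega)$ with $A'\Subset A\Subset B\cup C$, one wants
\begin{equation*}
\cF^{\cl d}_{3d,2d}(\cl u; A') \leq \cF^{\cl d}_{3d,2d}(\cl u; B) + \cF^{\cl d}_{3d,2d}(\cl u; C).
\end{equation*}
I would select near-optimal competitors $\{u_n^B\},\{u_n^C\}$ for the right-hand side, and glue them by means of a family of cut-off functions $\varphi_j$ depending only on $x_\alpha$, supported on a finite number of nested annular strips between $\overline{A'}$ and $\partial A$; the construction is carried out so that at least one of the strips yields a gluing whose bulk and surface energy in the transition layer is controlled by an arithmetic-mean averaging argument. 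Using the $p$-growth of $W_{3d}$ (Remark \ref{energies}\textit{(ii)}) and the linear bound in $(H_2)$, the extra contribution of the transition layer is $o(1)$ as $j\to\infty$ after passing $n\to\infty$. Because the cut-off functions depend only on $x_\alpha$, the constraint $\nu(u_n)\cdot e_3=0$ is preserved on the new jump set introduced by the gluing; the constraint $\tfrac{1}{\eps_n}\int_I \nabla_3 u_n\,\de x_3\rightharpoonup \cl d$ is preserved because the gluing does not alter the $x_3$-derivatives, so that the average of $\eps_n^{-1}\nabla_3 u_n$ on the transition region is a convex combination of the averages of the two original sequences, both of which weakly converge to the same piecewise constant $\cl d$.

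The subsequence extraction is handled by a diagonal procedure. Since $\cA(\omega)$ is not countable, I fix a countable family $\mathcal{R}\subset\cA(\omega)$ (for example, open rectangles with rational vertices compactly contained in $\omega$) that is dense for the order in which sub-additivity was proved. Applying a Cantor diagonal argument, one extracts a single subsequence $\eps_k=\eps_{n(k)}$ along which the $\liminf$ in \eqref{557} is attained as a limit for every $R\in\mathcal{R}$. The extension of the measure property from $\mathcal{R}$ to all of $\cA(\omega)$ proceeds via inner regularity, which follows from the nested form of sub-additivity described above combined with monotonicity.

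Finally, absolute continuity with respect to $\mathcal{L}^2 + \mathcal{H}^1\res{S(\cl u)}$ is obtained via an explicit upper bound. For a fixed $A\in\cA(\omega)$, I would construct an admissible recovery sequence of the form $u_n(x_\alpha,x_3):=\cl u(x_\alpha)+\eps_n x_3\,\cl d(x_\alpha)+\eps_n v_n(x_\alpha)$, where $v_n$ is furnished by Alberti's theorem (Theorem \ref{Al}) to enforce the average constraint and to manage the jumps of $\cl d$ (piecewise constant) without generating a jump set with normal having a nonzero $e_3$ component; the jump set of $u_n$ then coincides, up to controlled error, with $S(\cl u)\times I$, which has normal orthogonal to $e_3$ by the structure of $\cl u\in SBV(\omega;\R3)$. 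Feeding this into \eqref{557} and using the growth bound \eqref{pgrowth} for the bulk density and the upper bound in $(H_2)$ for the surface density yields
\begin{equation*}
\cF^{\cl d}_{3d,2d}(\cl u;A) \leq C\int_{A}\bigl(1+|\nabla \cl u|^p+|\cl d|^p\bigr)\,\de x_\alpha + c_h\int_{A\cap S(\cl u)}|[\cl u]|\,\de\mathcal{H}^1,
\end{equation*}
which gives the required domination. The main obstacle I anticipate is the sub-additivity step: the joint preservation of the jump-normal constraint $\nu(u_n)\cdot e_3=0$ and of the weak-average constraint on $\eps_n^{-1}\nabla_3 u_n$ under the gluing is delicate, and it is precisely the piecewise constancy of $\cl d$ that makes the argument tractable by ruling out oscillations of the constraint that would not survive the cut-off construction.
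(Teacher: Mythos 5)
Your proposal is in the spirit of the paper but differs in two substantive ways, one of which creates a genuine gap.

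For the fundamental (nested) subadditivity estimate, the paper does not use cut-off functions: it patches the two near-optimal competitors \emph{sharply}, defining
\[
\tilde u_k := \begin{cases} u_k^1 & \text{in } A_\rho\setminus\tilde A,\\ u_k^2 & \text{in } \tilde A,\\ \cl u + \eps_k x_3\,\cl d & \text{otherwise,}\end{cases}
\]
where $\tilde A$ is a Lipschitz set with $B_1\Subset\tilde A\Subset B_\rho$ and $\partial\tilde A\subset A_\rho$. The only new contribution is surface energy on the patching interfaces, which is absorbed into a dominating Radon measure $\Delta$. Your cut-off gluing $u=\varphi_j u^B+(1-\varphi_j)u^C$ instead introduces a \emph{bulk} term $\nabla\varphi_j\otimes(u^B-u^C)$. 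Under $p$-growth this enters as $|\nabla\varphi_j|^p|u^B-u^C|^p$, and since your competitors only converge to $\cl u$ in $L^1$ (the coercivity gives $L^p$ bounds on gradients, not on the functions themselves), the arithmetic-mean averaging over nested strips does not immediately kill this term: the averaging controls the energy of $u^B, u^C$ in the strips, but the cross term $|\nabla\varphi_j|^p|u^B-u^C|^p$ needs a separate argument (truncation, De Giorgi slicing, or a Poincar\'e-type inequality on the strips) that you do not supply. This is the step that would fail as written. The paper's sharp gluing avoids the issue entirely because it never modifies the bulk gradient.

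The second difference is structural but benign. You invoke De Giorgi--Letta abstractly; the paper instead fixes an optimal sequence $u_k=u_k^\omega$ for the whole cross-section $\omega$, forms the explicit measures $\Lambda_k$, extracts $\Lambda_k\wsto\Lambda$, and shows directly that $\cF^{\cl d}_{3d,2d}(\cl u;\cdot)=\Lambda(\cdot)$ by combining the nested subadditivity, the upper bound from a reference competitor, and an inner-regularity argument on $\Lambda$. Both approaches reach the same conclusion; the paper's is more hands-on and does not require verifying the De Giorgi--Letta axioms separately.

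Finally, a small point on your absolute continuity bound: since $\cl d$ is piecewise constant, the simple competitor $u_n:=\cl u+\eps_n x_3\,\cl d$ already satisfies $\tfrac1{\eps_n}\int_I\nabla_3 u_n\,\de x_3=\cl d$ exactly and has $\nu(u_n)\cdot e_3=0$ (its jump set is $(S(\cl u)\cup S(\cl d))\times I$). The Alberti correction $\eps_n v_n$ you propose is therefore unnecessary here --- the paper uses the unadorned competitor --- though adding it does no harm.
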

\begin{proof}
We start by noting that, considering the admissible sequence $u_n := \cl u + \eps_n x_3 \cl d$, by $(H_1)$ and $(H_2)$ the following upper bound holds
$$\cF_{3d,2d}^{\cl d}(\cl u;A) \leq C\bigg( \mathcal{L}^2 (A) + \int_A |\nabla \cl u|^p \de x_\alpha + \int_A |\cl d|^p \de x_\alpha + \|D \cl u \|(\overline{A})\bigg).$$ 
For each $a\in\omega$ with rational coordinates and for $i\in\N{}$, consider balls $B(a;r_i)$ with radii $r_i$ and depending on $a$, such that
\begin{equation}\label{577} \left | r_i - \frac{1}{i}\right| \leq \frac{1}{i^2},\; \; \; \; \overline{B(a; r_i)} \subset \omega,\; \;  \; \; \|D^s \cl u\|(\partial B(a; r_i)) = 0.\end{equation}
Let $\mathcal{B}(\omega)$ be the set of all such balls and their finite unions. The set of all closed balls $\overline{B(a; r_i)} $ is a fine cover of $\omega$ (see \cite[p.\@ 49]{AFP}).
Given a sequence $\eps_n \to 0^+$, by a standard diagonalization argument, we can take an appropriate subsequence $\eps_k: = \eps_{n(k)}$ such that, for each $B \in \mathcal{B}(\omega)$, we may find a sequence $u_k$ (depending on $B$) such that
\begin{equation}\label{580} u_{k}\rightarrow\overline{u}\text{ in }L^{1},\quad\int_{ I}\frac{\nabla_{3}u_{k}}{\eps_{k}}\,\de x_{3}\rightharpoonup \cl{d} \text{ in }L^{p},\quad \nu(u_{k})  \cdot
e_{3}=0, \end{equation}
and
\begin{equation}\label{585}
\cF_{3d,2d}^{\cl d}(\cl u;B) = \lim_{k\to \infty}\bigg(\int_{B \times I}W_{3d}\Big(\nabla_{\alpha}u_{k}\Big|\frac{\nabla_{3}u_{k}}{\eps_{k}}\Big)\de x+\int_{(B \times I)\cap S(u_{k})}h_{3d}\Big([u_{k}],\nu_{\alpha}(u_{k})\Big|\frac{\nu_{3}(u_{k})}{\eps_{k}}\Big)\de\mathcal{H}^{2}(x)\bigg).
\end{equation}
Next, we prove the following subadditivity property: for every $B, B_1, B_2 \in \mathcal{A}(\omega)$ such that $ B_1 \Subset  B \Subset B_2$, we have that
\begin{equation}\label{591}
\cF_{3d,2d}^{\cl d}(\cl u;B_2) \leq \cF_{3d,2d}^{\cl d}(\cl u;B) + \cF_{3d,2d}^{\cl d}(\cl u;B_2 \backslash \overline{B}_1).
\end{equation}
To this end, for each $B \in \mathcal{A}(\omega)$, define the Radon measure
$$\Delta(B) := C \bigg(\mathcal{L}^2(B) + \int_B |\cl d|^p \de x_\alpha + \int_B |\nabla \cl u|^p  \de x_\alpha + \|D^s \cl u\|(B)\bigg).$$
For fixed $\rho >0$ consider an open set $B_\rho \in \mathcal{B}(\omega)$ such that $B_\rho \subset B$. 
Using the Besicovitch covering theorem, we can find $A_\rho \in \mathcal B(\omega)$ such that $A_\rho \subset B_2 \backslash \overline{B}_1$ and
$$\Delta \left( (B_2\setminus \overline{B}_1)\setminus \overline{A}_\rho \right) < \rho.$$
Note that we can choose the sets above such that  there exists an open set $\tilde{A}$ with Lipschitz boundary and with $B_1\Subset\tilde{A} \Subset  B_\rho$ and with $\partial \tilde{A} \subset A_\rho$. Now, consider $\{u_k^1\} \in SBV(A_\rho; \R3)$ and $\{u_k^2\} \in SBV(B_\rho; \R3)$ satisfying \eqref{580} and \eqref{585}, and define
$$\tilde{u}_k := \left\{ \begin{array}{ll} u_k^1 & \text{in}\; A_\rho \setminus \tilde{A}\\
u_k^2 & \text{in}\; \tilde{A}\\
u_k & \text{otherwise in $B_2$,}
\end{array}
\right.
$$
where $u_k(x_\alpha, x_3):= \overline{u}(x_\alpha) + \eps_k x_3 \overline{d}(x_\alpha)$.
Notice that $\tilde{u}_k \in SBV(B_2; \R3)$ by \cite[Proposition 3.21]{AFP}.
Then we have that
\begin{align*} 
\cF_{3d,2d}^{\overline{d}}(\overline{u};B_2)&\leq \lim_{k\to\infty} \bigg( \int_{B_2 \times I} W_{3d}\Big(\nabla_\alpha \tilde{u}_k \Big| \frac{\nabla_3 \tilde{u}_k}{\eps_k}\Big)\de x + \int_{(B_2 \times I)\cap S(\tilde{u}_k)} h_{3d}([\tilde{u}_k], \tilde\nu_\alpha(\tilde{u}_k))\,\de \mathcal{H}^2(x)\bigg)\\
& \leq \cF_{3d,2d}^{\overline{d}}(\overline{u};A_\rho) + \cF_{3d,2d}^{\overline{d}}(\overline{u};B_\rho) + \Delta\left((B_2\setminus \overline{B}_1)\setminus \overline{A}_\rho \right)\\
&\leq \cF_{3d,2d}^{\overline{d}}(\overline{u};A_\rho) + \cF_{3d,2d}^{\overline{d}}(\overline{u};B_\rho)+\rho\\
&\leq \cF_{3d,2d}^{\overline{d}}(\overline{u};B_2\setminus \overline{B}_1) + \cF_{3d,2d}^{\overline{d}}(\overline{u};B)+ \rho. 
\end{align*}
Note that, since $A_\rho \setminus \tilde{A} \Subset A_\rho, \tilde{A} \Subset B_\rho$ and by \eqref{577}, the jumps of $\tilde{u}_k$ in the transition layers are included in the computations above. By letting $\rho \to 0$, we have that \eqref{591} holds.

In the following, let $u_k = u_k^{\omega}$ denote an appropriate sequence for which \eqref{585} holds in $\omega$. 
Define the sequence of bounded Radon measures
$$\Lambda_k(A) := \int_{A\times I} W_{3d}\Big(\nabla_\alpha u_k \Big| \frac{\nabla_3 u_k}{\eps_k}\Big) \; dx + \int_{(A \times I)\cap S(u_k)} h_{3d}([u_k], \tilde\nu_\alpha (u_k)) \; \de \mathcal{H}^2(x),$$
for $A \in \mathcal{A}(\omega)$ and extract a subsequence (not relabeled) such that $\Lambda_k \wsto \Lambda$. In order to complete the proof we show that for every $A \in \mathcal{A}(\omega)$ we have that
\begin{equation}\label{616}
\cF_{3d,2d}^{\overline{d}}(\overline{u};A) = \Lambda (A).
\end{equation}
Note first that for any $A \in \mathcal{A}(\omega)$, open set, the following inequality holds
\begin{equation}\label{621}
\cF_{3d,2d}^{\overline{d}}(\overline{u};A) \leq \Lambda (\overline{A}).
\end{equation}
Given $B \in \mathcal{A}(\omega)$, let $\rho > 0$ and consider $W \Subset B$ such that $\Lambda(B \backslash W) < \rho.$ Then, since $\Lambda (\omega) = \Lambda(\overline{\omega}),$ by \eqref{591} and \eqref{621} we have that
\begin{align*}
\Lambda (B) &\leq \Lambda( W) + \rho\\
& = \Lambda (\omega) - \Lambda(\omega \setminus W) + \rho\\
& \leq \cF_{3d,2d}^{\overline{d}}(\overline{u};\omega) - \cF_{3d,2d}^{\overline{d}}(\overline{u};\omega \setminus \overline{W}) + \rho\\
& \leq \cF_{3d,2d}^{\overline{d}}(\overline{u};B) + \rho.
\end{align*}
Letting $\rho \to 0$ we have that
\begin{equation}\label{632}
\Lambda (B) \leq \cF_{3d,2d}^{\overline{d}}(\overline{u};B).
\end{equation}
Finally, it remains to prove the reverse inequality. Let now $K \subset B$ be a compact set such that $\Delta(B\setminus K) < \rho$ and choose an open set $D$ such that $ K \Subset D \Subset B$. Again, by \eqref{591} we have
\begin{align*} 
\cF_{3d,2d}^{\overline{d}}(\overline{u};B) & \leq \cF_{3d,2d}^{\overline{d}}(\overline{u};D) + \cF_{3d,2d}^{\overline{d}}(\overline{u};B \setminus K)\\
& \leq \Lambda (\overline{D}) + \Delta (B\setminus K)\\
& \leq \Lambda (B) + \rho,
\end{align*}
which, together with \eqref{632} and letting $\rho \to 0$, yields the result. 
\qed
\end{proof}

Notice that, by Proposition \ref{569}, for every sequence $\{\eps_n\}$, the localized functional $\cF_{3d,2d}^{\overline{d}}(\overline{u};\cdot)$ defined in \eqref{557} is the trace of a Radon measure on $\cA(\omega)$, so that it admits an integral representation.

We are now ready to prove Theorem \ref{T3d2d}, and we divide its proof into four steps, each of which relies on the blow-up method of \cite{FM}:
%To obtain the result, it will be enough to
we will prove upper bounds for the Radon-Nikod\'{y}m derivatives of $\cF_{3d,2d}(\cl u,\cl d)$ with respect to $\cL^2$ and $\cH^1\res S(\cl u)$ at a point $x_0\in\omega$ (see \eqref{1151} and \eqref{uppersurf}, respectively), and lower bounds for the Radon-Nikod\'{y}m derivatives of a certain measure $\mu$ (the weak-* limit of the measures $\mu_n$ defined in \eqref{miu_n}) with respect to $\cL^2$ and $\vert [\cl u] \vert\mathcal{H}^{1}\res S(\cl u)$ (see \eqref{778} and \eqref{779}, respectively).
We will find that these upper and lower bounds are indeed independent of the particular choice of the sequence $\eps_n\to0^+$, so that estimates \eqref{1151}, \eqref{uppersurf}, \eqref{778}, and \eqref{779} will suffice to conclude the proof of the theorem.

Moreover, we note the connection with the theory of $\Gamma$-convergence presented in Section \ref{Gc}: Steps 1 and 2 correspond to proving the $\limsup$ inequality \eqref{BGLS}, Steps 3 and 4 correspond to proving the $\liminf$ inequality \eqref{BGLI}.
\paragraph{\textit{\textbf{Step 1} (Upper bound -- bulk)}}
We start by noticing that, by Lemma \ref{ctap} and \eqref{growth},  it is enough to derive the upper bound for the case where $\overline{d}$ is piecewise constant. 
In fact, given $u_n$ admissible for $\cF_{3d,2d}(\cl u, \cl d)$ and $\cl d_k$ a piecewise constant approximation of $\cl d$ given by Lemma \ref{ctap}, for each $k$ we can obtain an admissible sequence $u_{k,n}$ for $\cF_{3d,2d}(\cl u,\cl d_k)$ by defining $u_{k,n}:=u_n+h_{k,n}$, where $h_{k,n}$ is provided by Theorem \ref{Al} in such a way that $\nabla h_{k,n} =\eps_n \Big(0 \Big| \cl d_k - \int_I \frac{\nabla_3 u_n}{\eps_n}\de x_3\Big)$ and $||h_{k,n}||_{L^1(\Omega;\R3)}\leq C\eps_n\big(||\cl d_k||_{L^p(\omega;\R3)}+||\cl d||_{L^p(\omega;\R3)}\big)$. 
%\note{check this issue - pass to a subsequence?}
Therefore,
$$ \cF_{3d,2d}(\cl u, \cl d) \leq \liminf_{k \to \infty}\cF_{3d,2d}( \cl u, d_k) \leq \limsup_{k \to \infty} \bigg(\int_\omega W_{3d, 2d}(\nabla \cl u, d_k)\, \de x_\alpha + \int_{\omega \cap S(\cl u)} h_{3d, 2d}([\cl u], \nu (\cl u))\, \de \mathcal{H}^1(x_\alpha)\bigg),$$
and the result follows because $W_{3d, 2d}$ has growth of order $p$ (see Proposition \ref{p100}). 
Let $(\overline{u},\overline{d})\in SBV(\omega;\mathbb{R}^3)\times L^{p}(\omega;\mathbb{R}^{3})$, with $\cl d$ piecewise constant, and let $x_{0}\in\omega$ be chosen such that
\begin{equation}\label{apcontu}
\lim_{\delta\to0}\frac{1}{\delta^{2}} |D^s \overline{u}|(Q'(x_0, \delta)) = 0,
\end{equation}
\begin{equation}\label{apcont}
\lim_{\delta\to0}\frac{1}{\delta^{2}}\int_{Q'(x_{0},\delta)}\left\vert
\overline{d}(x_\alpha)-\overline{d}(x_{0})\right\vert^p \de x_\alpha=0, 
\end{equation}
\begin{equation}
\lim_{\delta\to0}\frac{1}{\delta^{2}}\int_{Q'(x_{0},\delta)}\left\vert \nabla_\alpha \cl u(x_\alpha) - \nabla_\alpha \cl u(x_0))\right\vert^p \,\de x_\alpha=0. \label{apdiff}
\end{equation}
It suffices to prove
\begin{equation}\label{1151}
\frac{\de\cF_{3d,2d}(\overline{u},\overline{d})}{\de\mathcal{L}^{2}}(x_{0})\leq W_{3d,2d}(\nabla_{\alpha}\overline{u}(x_{0}),\overline{d}(x_{0})).
\end{equation}
To this end, fix $\rho>0$ and choose $u\in SBV(Q';\mathbb{R}^{3})$ and $z\in L_{Q'-\operatorname*{per}}^{p}(\mathbb{R}^{2};\mathbb{R}^{3})$ piecewise constant such that
\begin{equation}
u|_{\partial Q'}(x_{\alpha})=\nabla_{\alpha}\overline{u}(x_{0})x_{\alpha},\qquad\int_{Q'}z(x_{\alpha})\,\de x_{\alpha}=\overline{d}(x_{0}),
\label{115}
\end{equation}
and
\begin{equation}
W_{3d,2d}(\nabla_{\alpha}\overline{u}(x_{0}),\overline{d}(x_{0}))+\rho\geq\int_{Q'}W_{3d}(\nabla_{\alpha}u|z)\,\de x_{\alpha}+\int_{Q'\cap S_{u}}h_{3d}([u],\tilde{\nu})\,\de\mathcal{H}^{1}(x_\alpha). \label{114}
\end{equation}

We now construct a sequence ${u_{\delta,n}}$ of competitors for the problem \eqref{107a} by setting $\zeta(x_{\alpha}):=u(x_{\alpha})-\nabla_{\alpha}\overline{u}(x_{0})x_{\alpha}$ (extended by periodicity to all of $\R2$) and defining
\begin{equation}
u_{\delta,n}(x_{\alpha},x_{3}):=\overline{u}(x_{\alpha})+\frac{\delta}{n}\zeta\Big(\frac{n(x_{\alpha}-x_{0})}{\delta}\Big)+\eps_{n}x_{3}\Big(\overline{d}(x_{\alpha})-\overline{d}(x_{0})+z\Big(\frac{n(x_{\alpha}-x_{0})}{\delta}\Big)\Big). 
\label{116}
\end{equation}
Clearly, $u_{\delta,n} \in SBV(\Omega;\mathbb{R}^{3})$, $\lim_{\delta,n} u_{\delta, n}=\overline{u}$ in $L^1(\Omega;\R3)$, and
\begin{equation}\label{119a}
\int_{I}\frac{\nabla_{3}u_{n}(x_{\alpha},x_{3})}{\eps_{n}}\,\de x_{3}=\overline{d}(x_{\alpha})-\overline{d}(x_{0})+z\Big(\frac{n(x_{\alpha}-x_{0})}{\delta}\Big).
\end{equation}
It is not difficult to see that $z(n(x_\alpha-x_0)/\delta)\wto\int_{Q'} z(x_\alpha)\,\de x_\alpha$ in $L^p(\omega;\R3)$, so that, by \eqref{115}, the right-hand side of \eqref{119a} converges to $\cl d(x_\alpha)$ as $n\to\infty$.
Notice that in the construction of ${u_{\delta, n}}$ the normal $\nu(u_{\delta,n})$ satisfies $\nu(u_{\delta,n})\cdot e_3 = 0$. 

Since $\overline{d}$ and $z$ are piecewise constant, we have
\begin{equation}\label{121}%
\nabla_{\alpha}u_{\delta, n}(x_{\alpha},x_{3})=\nabla_{\alpha}\overline{u}(x_{\alpha})+\nabla_{\alpha}\zeta\Big(\frac{n(x_{\alpha}-x_{0})}{\delta}\Big)=\nabla_{\alpha}\overline{u}(x_{\alpha})+\nabla_{\alpha}u\Big(\frac{n(x_{\alpha}-x_{0})}{\delta}\Big)-\nabla_{\alpha}\overline{u}(x_{0}). 
\end{equation}
Therefore, recalling $(H_1)$--$(H_4)$,
\begin{equation}
\begin{split}
\frac{\de\cF_{3d,2d}(\overline{u},\overline{d})}{\de\mathcal{L}^{2}}(x_{0})\leq  \lim_{\delta,n}\frac{1}{\delta^{2}} \bigg\{& \int_{Q'(x_{0};\delta){\times}I} W_{3d}\Big(\nabla_{\alpha}u_{\delta,n}\Big| \frac{\nabla_{3}u_{\delta,n}}{\eps_{n}}\Big)\de x  \\ 
& +\int_{(Q'(x_{0};\delta){\times}I)\cap S(u_{\delta,n})} h_{3d}([u_{\delta,n}],\tilde\nu_{\alpha}(u_{\delta,n}))\,\de\cH^{2}(x) \bigg\}  \\
 \leq   \lim_{\delta,n}\frac{1}{\delta^{2}}\bigg\{ & \int_{Q'(x_{0};\delta)}W_{3d}\Big( \nabla_{\alpha}u\Big(\frac{n(x_{\alpha}-x_{0})}{\delta}\Big)  \Big| z\Big(\frac{n(x_{\alpha}-x_{0})}{\delta}\Big)\Big)  \de x_{\alpha} \label{122}\\
& +  \int_{Q'(x_{0};\delta)\cap(x_{0}+\frac{\delta}{n}S(u))} h_{3d}\Big(\frac{\delta}{n}[u]\Big(\frac{n(x_{\alpha}-x_{0})}{\delta}\Big), \tilde\nu_{\alpha}(u)\Big) \de\mathcal{H}^{1}(x_{\alpha})\bigg\}  \\
 +\lim_{\delta\to0}
\frac{1}{\delta^{2}}& \int_{Q'(x_{0};\delta)}\left[|\nabla_{\alpha}\overline{u}(x_{\alpha})-\nabla_{\alpha}\overline{u}(x_{0})|^p+|\overline{d}(x_{\alpha})-\overline
{d}(x_{0})|^p\right]\,\de x_{\alpha}  \\
 +\lim_{\delta,n}\frac{1}{\delta^{2}} & |D^s\overline{u}|(Q'(x_0; \delta)) \\
+\lim_{\delta\to0}\frac1{\delta^2} & \limsup_{n\to\infty}{\eps_{n}}\bigg\{\int_{(Q'(x_{0};\delta){\times}I)\cap S(\cl{d})}|x_{3}(\overline{d}(x_{\alpha})-\overline{d}(x_{0}))|\,\de\mathcal{H}^{1}(x_{\alpha})\de\mathcal{L}^{1}(x_3)  \\
&+\int_{(Q' \times I)\cap S(z)}\frac{\delta}{n}|x_{3}z(y_{\alpha})|\,\de\mathcal{H}^{1}(y_{\alpha})\de\mathcal{L}^{1}(x_3)\bigg\}, 
\end{split}
\end{equation}
where in the last integral we performed the change of variables $y_{\alpha
}:=n(x_{\alpha}-x_{0})/\delta$. By the same change of variables and noticing that, by \eqref{apcontu}, \eqref{apcont}, \eqref{apdiff}, and the hypothesis on $z$, the last four terms in \eqref{122} vanish, we are left with
\begin{equation}
\begin{split}
\frac{\de\cF_{3d,2d}(\overline{u},\overline{d})}{\de\mathcal{L}^{2}}(x_{0}) 
\leq &  \lim_{\delta,n}\frac{1}{n^{2}}\bigg\{  \int_{nQ'}W_{3d}(\nabla_{\alpha}u(y_{\alpha})|z(y_{\alpha}))\,\de y_{\alpha}+\int_{nQ'\cap S(u)}
h_{3d}([u](y_{\alpha}),\tilde{\nu}(u))\,\de\mathcal{H}^{1}(y_{\alpha})\bigg\} \\
\leq  &  \int_{Q'}W_{3d}(\nabla_{\alpha}u(y_{\alpha})|z(y_{\alpha}))\,\de y_{\alpha}+\int_{Q'\cap S(u)}h_{3d}([u](y_{\alpha}),\tilde{\nu}(u))\,\de\mathcal{H}^{1}(y_{\alpha})\\
\leq &  W_{3d,2d}(\nabla_{\alpha}\overline{u}(x_{0}),\overline{d}(x_{0}))+\rho,
\end{split}
\label{123}
\end{equation}
where we have used the periodicity of the functions $z$ and $\zeta$, assumption $(H_2)$, and \eqref{114}. 
The arbitrary choice of $\rho$ yields now \eqref{1151}.
By approximating with piecewise constant functions (see Lemma \ref{ctap}) and using \eqref{growth}, the estimate is extended to a general $z$.

\paragraph{\textit{\textbf{Step 2} (Upper bound -- surface)}}

Following an argument in \cite{AMT}, and taking into account Proposition \ref{p100},  it suffices to prove the upper bound for the case where $\cl u$ is of the form  $\cl u = \lambda \chi_U$, where $\chi_U$ denotes the characteristic function of a set of finite perimeter $U\subset\omega$ and $\lambda\in\R3$. Moreover, by standard arguments we can restrict ourselves to the case where $U$ is a polygonal set.
Given $x_0 \in S(\cl u)$, writing for simplicity $\nu := \nu (\cl u)(x_0)$, by the definition of
$h_{3d,2d}$, for any $\rho>0$ we may find $u\in SBV(Q'_{\nu};\mathbb{R}^{3})$, such that $\nabla_\alpha u=0$ a.e., $u|_{\partial Q_\nu'}=\gamma_{\lambda,\nu}$, and
$$\int_{Q'_{\nu}\cap S(u)}h_{3d}([u],\tilde{\nu})\,\de\mathcal{H}^{1}(x_\alpha)\leq h_{3d,2d}([\overline{u}],\nu(\overline{u}))(x_{0})+\rho.$$

We claim that
\begin{equation}\label{uppersurf}
\frac{\de\cF_{3d,2d}(\overline{u},\overline{d})}{\de\mathcal{H}^{1}\res S(\overline{u})}(x_{0})  \leq h_{3d,2d}([\overline{u}](x_{0}),\nu(\cl u)(x_{0})),
\end{equation}
for $\mathcal{H}^1-$ a.e. $x_0 \in \omega\cap S(\overline{u}).$
Now put $\lambda:=[\overline{u}](x_{0})$, and since it is not restrictive to assume that $\nu =e_{2}$, define 
\begin{align*}
	D_{n}(x_{0},\delta)   & := \bigg( Q'(x_{0},\delta)  \cap\bigg\{ x:\bigg\vert (x-x_{0})\cdot e_2\bigg\vert <\frac{\delta}{2n}\bigg\}\bigg)\times I,\\
	Q^{+}(x_{0},\delta)   & := \left (Q'(x_{0},\delta)  \cap \left\{ x:(x-x_{0})  \cdot e_2>0\right\}\right) \times I,\\
	Q^{-}(x_{0},\delta)   & := \left (Q'(x_{0},\delta)  \cap \left\{  x: (x-x_{0})  \cdot e_2<0\right\}\right)\times I.
\end{align*}
Let
\[u_{\delta,n}(x_{\alpha},x_{3}):=\left\{\begin{array}[c]{lll}
\lambda+\eps_{n}x_{3}\overline{d}, &  & \text{in } Q^{+}(x_{0},\delta)  \backslash D_{n}(x_{0},\delta),\\
\displaystyle u\left(\frac{n(x_{\alpha}-x_{0})}{\delta}\right)   &  & \text{in } D_{n}(x_{0},\delta),\\
\eps_{n}x_{3}\overline{d} &  & \text{in }Q^{-}(x_{0},\delta)  \backslash D_{n}(x_{0},\delta).
\end{array}
\right.
\]

Clearly, $u_{\delta,n}\rightarrow \cl u$ in $L^{1}(Q(x_{0},\delta) ;\mathbb{R}^{3})$ (that is, it converges to $\tilde{u}(x_\alpha, x_3) := \cl u(x_\alpha)$),
$\tfrac1{\eps_{n}}\int_I\nabla_{3}u_{\delta,n}\,\de x_{3}\rightharpoonup
\overline{d}$ in $L^{p}(Q(x_{0},\delta) ;\mathbb{R}^{3})$, both as $n\to\infty$, and $\nu(u_{\delta,n})\cdot e_{3}=0.$ 

Thus, 
\begin{align*}
	\frac{\de\cF_{3d,2d}(\overline{u},\overline{d})}{\de\mathcal{H}^{1}\res S(\cl u)}(x_{0})   \leq\lim_{\delta, n}
	\frac{1}{\delta}\bigg\{ &  \int_{Q'(x_{0},\delta)\times I}
	W_{3d}\left(\nabla_{\alpha}u_{\delta,n}\left|\frac{\nabla_{3}u_{\delta,n}}{\eps_{n}}\right.\right)\de x\\
	& +\int_{(Q'(x_{0},\delta) \times I)\cap S(u_{\delta,n})}h_{3d}\left([u_{\delta,n}],\nu_{\alpha}(u_{\delta, n}) \left|\frac{\nu_{3}(u_{\delta,n})}{\eps_{n}}\right.\right)\de\mathcal{H}^{2}(x)\bigg\} \\
 =\lim_{\delta,n}\frac{1}{\delta}\bigg\{ &\int_{(Q'(x_{0},\delta) \times I) \setminus D_{n}(x_{0,}\delta) }W_{3d}(0|\overline{d})\,\de x \\
 &+\int_{[(Q'(x_{0},\delta) \times I) \setminus D_{n}(x_{0,}\delta)] \cap (S(\cl d)\times I)}h_{3d}\left(\eps_n x_3[\cl d],\tilde{\nu}(\cl d)\right)\de\mathcal{H}^{2}(x)  \\
	&  +\int_{D_{n}(x_{0,}\delta) }W_{3d}\left(\frac{n}{\delta}\nabla_{\alpha}u\left.\left( \frac{n(x_{\alpha}-x_{0})}{\delta}\right) \right|0\right)\de x \\
	& +\int_{D_{n}(x_{0,}\delta) \cap \{ x_0 + \frac{\delta}{n} S(u)\}\times I} h_{3d}\left([u]\left(  \frac{n\left(  x_{\alpha}
		-x_{0}\right)}{\delta}\right),\tilde{\nu}(u)\right)\de\mathcal{H}^{2}(x)\bigg\}.  \\
\end{align*}
Using now the growth conditions on $W_{3d}$ and $h_{3d}$ and changing variables one obtains
\begin{equation*}
\begin{split}
\frac{\de\cF_{3d,2d}(\overline{u},\overline{d})}{\de\mathcal{H}^{1}\res S(\cl u)}(x_{0}) \leq  \lim_{\delta,n}\frac{1}{\delta}\bigg\{ & \int_{Q^{\prime}(x_{0},\delta )}C(1+|\cl d|^{p}) \,\de x_\alpha  + c_h\, \eps_n |D^s \cl d|(Q'(x_0, \delta))\\
& +\int_{D_{n}(x_{0},\delta)}W_{3d}\Big(\frac{n}{\delta }\nabla_{\alpha }u\Big( \frac{n(x_{\alpha }-x_{0})}{\delta }\Big)\Big|0\Big) \de x \\
& +\int_{D_{n}(x_{0},\delta)\cap \{ x_0 + \frac{\delta}{n} S(u)\}\times I}h_{3d}\Big ([u]\Big(\frac{n(x_{\alpha }-x_{0})}{\delta}\Big) ,\tilde{\nu}(u)\Big) \de\mathcal{H}^{2}(x) \bigg\} \\
\leq \lim_{\delta ,n}\bigg\{ & \frac{\delta}{n^2}\int_{nQ'\times I}W_{3d}\Big(\frac{n}{\delta }\nabla_{\alpha }u(y_{\alpha})\Big|0\Big)\de y \\
& +\frac{1}{n}\int_{(nQ'\times I)\cap (S(u)\times I) \cap \{ y\cdot e_2| \leq \frac{1}{2}\}}h_{3d}([u](y_{\alpha}),\tilde{\nu}(u))\de \mathcal{H}^{2}(y)\bigg\},
\end{split}
\end{equation*}
since, without loss of generality, the piecewise constant function $\cl d$ can be taken to belong to $L^\infty$ (see the proof of Lemma \ref{ctap}).
Moreover, since $\nabla_\alpha u = 0$, we have that:
$$\lim_{\delta ,n} \frac{\delta}{n^2}\int_{nQ'\times I}W_{3d}\Big(\frac{n}{\delta }\nabla_{\alpha }u(y_{\alpha })\Big|0\Big) \de y \leq C\delta,$$
and this term also vanishes in the limit $\delta\to0$.
We then have that
\begin{equation*} 
\begin{split}
\frac{\de\cF_{3d,2d}(\overline{u},\overline{d})}{\de\mathcal{H}^{1}\res S(\cl u)}(x_{0}) \leq & \lim_{\delta, n} \frac{1}{n}\int_{(nQ' \times I)\cap (S(u)\times I) \cap \{ y\cdot e_2| \leq \frac{1}{2}\}}h_{3d}([u](y_{\alpha }),\tilde{\nu }(u))\de \mathcal{H}^{2}(y)\\
\leq &\liminf_{n \to \infty}\frac{1}{n} \int_{(nQ'\times I) \cap (S(u)\times I) \cap \{ y\cdot e_2| \leq \frac{1}{2}\}}h_{3d}([u](y_{\alpha }),\tilde{\nu}(u))\de \mathcal{H}^{2}(y)  \\
=&\int_{Q'\cap S(u)} h_{3d} ([u], \tilde\nu(u)) \, \de \mathcal{H}^1 \leq h_{3d, 2d}(\lambda, \nu) + \rho,
\end{split}
\end{equation*}
from which \eqref{uppersurf} follows.

\paragraph{\textit{\textbf{Step 3} (Lower bound -- bulk)}}
Given a set $B \in \cA(\omega)$, let $u_n\in SBV(\Omega;\R3)$ be an admissible sequence for $\cF_{3d,2d}(\overline{u},\overline{d})(B)$ with $\mu_n$ the corresponding sequence of nonnegative Radon measures given by 
\begin{equation}\label{miu_n}
\mu_{n}(B)  :=\int_{B\times I}W_{3d}\Big(\nabla_{\alpha}u_{n}\Big|\frac{\nabla_{3}u_{n}}{\eps_{n}}\Big)\de x+\int_{(B\times I)\cap S(u_{n})} h_{3d} ([u_{n}],\tilde\nu(u_{n})
) \de\mathcal{H}^{2}(x).
\end{equation}
Let $x_{0}\in\omega\,$ satisfying 
\begin{equation}\label{827}
\lim_{\delta \to 0} \frac{1}{\delta^{3}}\int_{Q'(x_0, \delta)} | \cl u(x_\alpha) - \cl u (x_0) - \nabla \cl u (x_0) (x_0 - x_\alpha)| \, \de x_\alpha = 0,
\end{equation}
\begin{equation}\label{830}
\lim_{\delta \to 0} \frac{1}{\delta^2} \int_{Q'(x_0, \delta)}|\cl d(x_\alpha) - \cl d(x_0)|^p\, \de x_\alpha = 0.
\end{equation}
By $(H_1)$ and $(H_2)$ $\mu_{n}$ is bounded and so, up to subsequence (not relabeled), there exists a positive Radon measure $\mu$ such that $\mu_{n}\overset{\ast}{\rightharpoonup}\mu$. 
In addition, choose $x_0\in\omega$ such that $\frac{\de\mu}{\de\mathcal{L}^{2}}(x_{0})$ exists and is finite. 
Moreover, there exists a sequence of radii ${\delta_{k}}\to0$ such that $\mu(\partial Q(x_{0},\delta_{k}))=0$ for every $k\in\mathbb{N}.$

It suffices to prove that
\begin{equation}\label{778}
\frac{\de\mu}{\de\mathcal{L}^{2}}(x_0)  \geq W_{3d,2d}(\nabla_{\alpha}\overline{u}(x_0)  ,\overline{d}(x_0))\qquad\text{for $\mathcal{L}^2-$ a.e. $x_0 \in \omega.$}
\end{equation}
We have
\begin{equation*}
\begin{split}
\frac{\de\mu}{\de\mathcal{L}^{2}}(x_{0})   & = \lim_{k,n}\frac{1}{\delta_{k}^{2}}\mu_{n}(Q(x_{0},\delta_{k})) \\
&=\lim_{k,n}\frac{1}{\delta_{k}^{2}} \bigg(\int_{Q'(x_{0},\delta_{k})  \times I}W_{3d}\Big(\nabla_{\alpha}u_{n} \Big|\frac{\nabla_{3}u_{n}}{\eps_{n}}\Big)\de x +\int_{(Q'(x_{0},\delta_{k})  \times I)\cap S(u_{n})}h_{3d}([u_{n}],\tilde\nu(u_{n}))\,\de\mathcal{H}^{2}(x)\bigg).
\end{split}
\end{equation*}
Performing the change of variables $y_{\alpha}=(x_{\alpha}-x_{0})/\delta_{k}$ one obtains
\begin{align*}
	\frac{\de\mu}{\de\mathcal{L}^{2}}(x_{0})   & =\lim_{k,n}\bigg\{ \int_{Q'\times I}W_{3d}\Big(\nabla_{\alpha}u_{n}(x_{0}+\delta_{k}y_{\alpha},y_{3})\Big|\frac{\nabla_{3}u_{n}(x_{0}+\delta_{k}y_{\alpha},y_{3})}{\eps_{n}}\Big)  \de y \\
	& +\frac{1}{\delta_{k}} \! \int_{(Q'\times I)\cap\{(y_{\alpha},y_{3}):(x_{0}+\delta_{k}y_{\alpha},y_{3})  \in S(u_{n}) \}} \!\!\!\! h_{3d}([u_{n}](x_{0}+\delta_{k}y_{\alpha},y_{3}),\tilde\nu_{\alpha}(u_{n})  (x_{0}+\delta_{k}y_{\alpha},y_{3}))\,\de\mathcal{H}^{1}(y_{\alpha})\de y_{3}\bigg\}  .
\end{align*}

Defining
$$u_{k,n}(y):=\frac{u_{n}(x_{0}+\delta_{k}y_{\alpha},y_{3})-\overline{u}(x_{0})}{\delta_{k}},$$
we have
\begin{equation*}
\nabla_{\alpha}u_{k,n}(y)  =\nabla_{\alpha}u_{n}(x_0+\delta_ky_\alpha,y_3),\quad \nabla_{3}u_{k,n}(y)=\frac{1}{\delta_{k}}\nabla_{3}u_{n}(x_{0}+\delta_{k}y_{\alpha},y_{3}),\quad [u_{k,n}] (y)  =\frac{1}{\delta_{k}} [u_{n}](x_0+\delta_ky_\alpha,y_3),
\end{equation*}
and so, recalling $(H_3)$,
\begin{equation*}
\frac{\de\mu}{\de\mathcal{L}^{2}}(x_{0})=\lim_{k,n}\bigg\{\int_{Q}W_{3d}\Big(\nabla_{\alpha}u_{k,n}\Big|\frac{\delta_{k}\nabla_{3}u_{k,n}}{\eps_{n}}\Big) \de y+\int_{Q\cap S(u_{k,n})} h_{3d}([u_{k,n}],\tilde\nu_{\alpha}(u_{n,k}))\de\mathcal{H}^{2}(y)\bigg\}.
\end{equation*}

Choose $n(k) \in\mathbb{N}$ such that ${\eps}_{k}':=\delta_k^{-1}\eps_{n(k)}\rightarrow0$; we have that the sequence $v_{k}(\cdot):=u_{k,n(k)}(\cdot)$ converges in $L^{1}$  to $\nabla_{\alpha}\overline{u}(x_0)(\cdot)$ by \eqref{827} and, by \eqref{830},
\begin{equation}\label{777}
\int_{I}\frac{\nabla_{3}v_{k}(y)}{\eps_{k}'}\,\de y_{3}\rightharpoonup\overline{d}(x_{0}) \qquad\text{in $L^p(\omega;\R3)$}.
\end{equation}
Then
\begin{equation*}
\frac{\de\mu}{\de\mathcal{L}^{2}}(x_{0}) =\lim_{k\to\infty}\bigg\{\int_{Q}W_{3d}\Big(\nabla_{\alpha}v_{k}\Big|\frac{\nabla_{3}v_{k}}{\eps_{k}'}\Big)\de y +\int_{Q\cap S(v_{k})}h_{3d}([v_{k}],\tilde\nu(v_{k}))\,\de \mathcal{H}^{2}(y)\bigg\}.
\end{equation*}
Next, we change slightly the sequence, in order to comply with the boundary condition in \eqref{107b}. We follow similar arguments to what is done in \cite{CF}.
Let $Q_{j}':=\{y_\alpha\in Q^{\prime}:\operatorname*{dist}(
y_\alpha,\partial Q^{\prime})>\frac{1}{j}\}  $ such that
$$\lim_{k\to\infty}\int_{\partial (Q_{j}'\times I)}\left\vert \nabla\overline{u}(x_{0})  y_{\alpha}-v_{k}(y_{\alpha},y_{3})\right\vert\, \de\mathcal{H}^{2}(y)=0$$
and define
$$
v_{k,j}(y):=\begin{cases}
v_{k}(y)   &  \text{in }Q_{j}'\times I,\\
\nabla_\alpha\overline{u}(x_{0})  y_{\alpha}  & \text{in }(Q^{\prime}\setminus Q_{j}')\times I.
\end{cases}$$
Clearly, $v_{k,j}\rightarrow v_{k}$ in $L^{1}(Q;\R3)$ as $j\to\infty$, and therefore, recalling $(H_1)$ and $(H_2)$,
$$\frac{\de\mu}{\de\mathcal{L}^{2}}(x_{0})  \geq\lim_{k,j}\bigg\{\int_{Q}W_{3d}\Big(\nabla_{\alpha}v_{k,j}\Big|\frac{\nabla_{3}v_{k,j}}{\eps_{k}'}\Big) \de y+\int_{Q\cap S(v_{k,j})}h_{3d}([v_{k,j}],\tilde\nu_{\alpha}(v_{k,j})) \,\de\mathcal{H}^{2}(y)  \bigg\}.$$
Following our argument in Step 1, for fixed $k$ we apply Theorem \ref{Al} to construct a function $g_{k,j}\in SBV(Q; \R3)$ such that 
$\nabla g_{k,j}=\eps_k'\Big(0\Big|\overline{d}(x_{0})-\int_I \frac{\nabla_{3}v_{k,j}}{\eps_{k}'} \,\de y_3\Big)$ 
and $\lVert g_{k,j}\rVert_{L^1(Q;\R3)}\leq C \eps_k'\left\lVert\overline{d}(x_{0})-\int_I \frac{\nabla_{3}v_{k,j}}{\eps_k'} \,\de y_3\right\rVert_{L^1(Q';\R3)}$. 
It is not difficult to verify that the function $w_{k,j}:=v_{k,j}+g_{k,j}$ is a competitor for $W_{3d,2d}(\nabla_{\alpha}\overline{u}(x_{0}) |\overline{d}(x_{0}))$, so that, recalling again $(H_1)$ and $(H_2)$,
\begin{equation*}
\begin{split}
\frac{\de\mu}{\de\mathcal{L}^{2}}(x_{0})& \geq\lim_{k,j}\bigg\{\int_{Q}W_{3d}\Big(\nabla_{\alpha}w_{k,j}\Big|\frac{\nabla_{3}w_{k,j}}{\eps_{k}'}\Big)\de y+\int_{Q\cap S(w_{k,j})}h_{3d}([w_{k,j}],\tilde\nu_{\alpha}(w_{k,j}))\de\mathcal{H}^{2}(y)\Big\} \\
& \geq W_{3d,2d}(\nabla_{\alpha}\overline{u}(x_{0})|\overline{d}(x_{0})),
\end{split}
\end{equation*}
which proves \eqref{778}.

\paragraph{\textit{\textbf{Step 4} (Lower bound -- surface)}}
Consider the sequence of functions $u_n\in SBV(\Omega;\R3)$ as at the beginning of Step 3, and let $\mu_n$ be the corresponding sequence of Radon measures given by $\eqref{miu_n}$.
Recalling that $\mu$ is their weak-* limit, we claim
that for $\mathcal{H}^{1}\res S(\cl u)$-a.e. $x_{0}\in S(\cl u)$
\begin{equation}\label{779}
\frac{\de{\mu}}{\de(\vert [\cl u] \vert\mathcal{H}^{1}\res S(\cl u))}(x_0)\geq\frac{1}{\vert [\cl u] \vert(x_0)}h_{3d,2d}([\cl u](x_0),\nu(\cl u)(x_0)).
\end{equation}
Since $\left( \nabla_\alpha u_n\Big| \frac{\nabla_3 u_n}{\eps_n}\right)$ is bounded in $L^p(\Omega;\R{3\times3})$, we have that $\nabla u_n \wto (H|0)$ in $L^p(\Omega;\R{3\times3})$ (up to a subsequence), for some $H \in L^p(\omega; \R{3{\times}2})$.
Let $x_0\in\omega\cap S(\cl u)$ be such that $\displaystyle\frac{\de\mu}{\de\mathcal{H}^{1}\res S(\cl u)}(x_0)$ exists, and consider a sequence $\delta_{k}\to0$ such that, denoting $\nu:=\nu(\cl u)(x_0)$,
\begin{gather*}
\lim_{k\to\infty}\vert[\overline{u}]\vert\mathcal{H}^{1}(S(\overline{u})\cap Q'_{\nu}(x_{0},\delta_{k}))=\vert[\overline u]\vert(x_0),\\
\lim_{k\to\infty}\frac{1}{\delta_{k}}\int_{Q'_{\nu}(x_0,\delta_{k})}\left\vert H(x_{\alpha})  \right\vert
\de x_{\alpha}=0.
\end{gather*}
Then
\begin{equation*}
\begin{split}
&\frac{\de\mu}{\de(\vert [\cl u] \vert\mathcal{H}^{1}\res S(\cl u))}(x_0) =\frac{1}{|[\overline{u}]|(x_0)}\lim_{k,n}\frac{1}{\delta_{k}}\bigg\{\int_{Q_\nu^{\prime}(x_0,\delta_k)\times I}W_{3d}\Big(\nabla_{\alpha}u_{n}\Big|\frac{\nabla_{3}u_{n}}{\eps_{n}}\Big)\de x  \\
& \qquad \phantom{\frac{1}{|[\overline{u}]|(x_0)}\lim_{k,n}\frac{1}{\delta_{k}}\quad} +\int_{(Q_\nu^{\prime}(x_{0},\delta_{k})\times I)\cap S(u_n) }h_{3d}([u_{n}],\tilde\nu_{\alpha}(u_{n})) \de\mathcal{H}^{1}(x_\alpha)\de x_3\bigg\} \\
&\quad=\frac{1}{\vert [u]\vert(x_{0})}\lim_{k,n}\bigg\{\delta_{k}\int_{Q_{\nu}^{\prime}\times I} W_{3d}\Big(\nabla_{\alpha}u_{n}(x_{0}+\delta_{k}y_{\alpha},y_{3})\Big|\frac{\nabla_{3}u_{n}(x_{0}+\delta_{k}y_{\alpha},y_{3})}{\eps_{n}}\Big)\de y\\
&\qquad+ \int_{(Q_{\nu}^{\prime}\times I)\cap \{y_{\alpha}:(x_{0}+\delta_{k}y_{\alpha},y_{3})  \in S(u_n)\}}h_{3d}([u_{n}](x_{0}+\delta_{k}y_{\alpha},y_{3}),\tilde\nu(u_{n})(x_{0}+\delta_{k}y_{\alpha},y_{3}))\de \mathcal{H}^{1}(y_\alpha)\de y_3\bigg\} \\
&\quad= \frac{1}{|[u]|(x_0)}\lim_{k,n}\bigg\{\int_{Q_{\nu}^{\prime}\times I}W_{3d}\Big(\frac{\nabla_{\alpha}u_{k,n}}{\delta_k}\Big| \frac{\nabla_{3}u_{k,n}}{\eps_{n}}\Big)\de y \\
&\quad \phantom{\frac{1}{|[u]|(x_0)}\lim_{k,n}\quad} + \int_{(Q_{\nu}^{\prime}\times I)\cap S(u_{k,n})}h_{3d}([u_{k,n}],\tilde\nu(u_{k,n})) \de\mathcal{H}^{1}(y_\alpha)\de y_3\bigg\},
\end{split}
\end{equation*}
where $u_{n, k}(y) := u_n(x_0 + \delta_ky_\alpha, y_3) - (\cl{u})^-(x_0)$.
By a diagonalization argument let $v_{k}:=u_{k,n(k)}$ so that $\lim_{k,n}\left\Vert v_k-\gamma_{[\overline{u}](x_0),\nu}\right\Vert _{L^{1}(Q'_\nu \times I)}=0$, $\nabla v_k \wto 0$ in $L^p(Q'_\nu \times I; \R3)$ and
 
 $$\frac{\de\mu}{\de(\vert [\cl u] \vert\mathcal{H}^{1}\res S(\cl u))}(x_0) \geq \frac{1}{|[\overline{u}]|(x_0)}\liminf_{k \to \infty} \int_{(Q_{\nu}^{\prime}\times I)\cap S(v_k)} h_{3d}( [v_k], \tilde\nu_{\alpha}(v_k))\, \de \mathcal{H}^2(y).$$
Following the arguments in \cite[Proposition 4.2]{CF}, we can obtain a new sequence $w_k$ which is a competitor for the cell problem \eqref{107c}, which implies \eqref{779}.
This concludes the proof of Theorem \ref{T3d2d}.
\qed

\subsection{Structured deformations}
In oder to pass to structured deformation for the functional in \eqref{F3d2d}, we shall use the relaxation theory developed in \cite{CF} to obtain the representation Theorem \ref{secondleft}.
Given $(\cl g,\cl G,\cl d)\in SBV(\omega;\R3){\times}L^1(\omega;\R{3{\times}2})\times L^p(\omega; \R3)$, we define the relaxed energy
\begin{equation}\label{200}
\begin{split}
\cF_{3d,2d,SD}(\cl g,\cl G,\cl d) :=  \inf \bigg\{ & \liminf_{n\to\infty} \bigg(\int_{\omega } W_{3d,2d}(\nabla u_n, \cl d)\,\de x_\alpha + \int_{\omega\cap S ( u_{n})} h_{3d,2d}([ u_n],\nu(u_n))\de \cH^1(x_\alpha)\bigg): \\
&  u_n \in SBV(\omega;\R3),\;  u_n\to \cl g \text{ in } L^1(\omega;\R3),\; \nabla u_n\wto \cl G \text{ in } L^p(\omega;\R{3\times2}) \bigg\}.
\end{split}
\end{equation}
%\red{In view of Proposition \ref{B132}, the functional $\cF_{3d,2d,SD}$ defined above is a $\Gamma$-lower limit (see Definition \ref{defLGL}).}
\begin{remark}\label{pwc}
We notice that the presence of the field $\cl d$ in \eqref{F3d2d} introduces a dependence $x\mapsto W_{3d,2d}(A,\cl d(x))$ of the bulk density on the space variable $x$ not covered in \cite{CF}.
One approach to incorporate such a dependence on $x$ is to require that $x\mapsto W_{3d,2d}(A,\cl d(x))$ be continuous.
Such a continuity requirement was introduced in \cite{BBBF}.
To apply directly the results contained in \cite{BBBF}, we would need to impose a stronger regularity on the field $\cl d$, namely, we would have to require $\cl d\in C(\omega;\R3)$.
We avoid this by applying the technique presented in \cite{MS}: we approximate $\cl d$ by a sequence of piecewise constant functions $\cl d_k\in L^p(\omega;\R3)$, and we exploit the property \eqref{wishful} of the bulk energy density $W_{3d,2d}$ and the approximation result provided in \cite[Lemma 2.9]{CF}.
\end{remark}

Without writing the details of the proof, we assert that these observations, together with Proposition \ref{p100}, allow us to establish the following representation theorem.
\begin{theorem}\label{secondleft}
Under the hypotheses $(H_1)$--$(H_4)$, for each $(\cl g,\cl G,\cl d)\in SBV(\omega;\R3){\times}L^1(\omega;\R{3{\times}2})\times L^p(\omega; \R3)$,
the energy $\cF_{3d,2d,SD}(\cl g,\cl G,\cl d)$ admits an integral representation of the form:
\begin{equation}\label{201}
\cF_{3d,2d,SD}(\cl g,\cl G,\cl d)=\int_\omega W_{3d,2d,SD}(\nabla\cl g,\cl G,\cl d)\,\de x_\alpha+ \int_{\omega\cap S(\cl g)} h_{3d,2d,SD} ([\cl g],\nu (\cl g))\,\de\cH^1(x_\alpha),
\end{equation}
where, for $A , B \in \R{3{\times}2}, d \in \R3,$
\begin{equation}\label{2011}
\begin{split}
W_{3d,2d,SD}(A, B, d) := \inf \bigg\{ &\int_{Q'} W_{3d,2d} (\nabla u(x_\alpha), d)\, \de x_\alpha + \int_{Q'\cap S(u)} h_{3d,2d}([u], \nu(u))\, \de \cH^1(x_\alpha):\\
& u \in SBV(Q'; \R3),\; u|_{\partial Q'} = Ax_\alpha, \; \int_{Q'} \nabla u \,\de x_\alpha = B, \; |\nabla u| \in L^p(Q') \bigg\}
\end{split}
\end{equation}
and, for $\lambda \in \R3$ and $\eta \in \S1,$
\begin{equation}\label{2012}
\! h_{3d, 2d, SD} (\lambda,\eta)\!:=\inf \bigg\{ 
\!\int_{Q'_{\eta}\cap S(u)} \!\! h_{3d, 2d} ([u], \nu(u))\, \de\cH^1(x_\alpha): 
u \in SBV(Q'_{\eta}; \R3), \nabla u = 0, u|_{\partial Q'_{\eta}}= \gamma_{\lambda, \eta} \bigg\}.
\end{equation}
\end{theorem}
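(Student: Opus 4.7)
The plan is to apply the CF relaxation framework of \cite{CF} to the energy $\cF_{3d,2d}$ from \eqref{F3d2d}, now treating it as the \emph{starting} energy for the structured-deformation relaxation defined by \eqref{200}. The obstruction pointed out in Remark \ref{pwc} is that the bulk density $W_{3d,2d}(A,\cl d(x))$ depends on $x$ through an $L^p$-field, which falls outside the setting of \cite{CF}. I would handle this exactly as the remark suggests: approximate $\cl d$ by a sequence $\{\cl d_k\}\subset L^p(\omega;\R3)$ of piecewise-constant fields with $\cl d_k\to\cl d$ in $L^p$, run the CF machinery on the energy with the piecewise-constant density $W_{3d,2d}(A,\cl d_k(x))$ (which is trivially continuous in $x$ on each piece of the partition, so \cite{BBBF} applies on each piece), and finally send $k\to\infty$. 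The Lipschitz estimate \eqref{wishful} from Proposition \ref{p100}(i), combined with H\"older's inequality and the approximation result \cite[Lemma 2.9]{CF}, provides the uniform control of the energy gap between admissible competitors needed in this approximation.

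Once the $x$-dependence is neutralized, Proposition \ref{p100} supplies precisely the structural hypotheses of \cite{CF}: $p$-growth and $p$-Lipschitz continuity of $W_{3d,2d}$, and $(H_2)$--$(H_4)$ plus Lipschitz continuity in $\lambda$ and upper semicontinuity in $\eta$ of $h_{3d,2d}$. I would then follow the blow-up method of \cite{FM} in the same four-step scheme used for Theorem \ref{T3d2d}. First, the localized functional $A\mapsto\cF_{3d,2d,SD}(\cl g,\cl G,\cl d;A)$ is shown to be the trace of a Radon measure absolutely continuous with respect to $\cL^2+\cH^1\res S(\cl g)$, via a subadditivity argument completely parallel to Proposition \ref{569}. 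The bulk upper bound at Lebesgue points of $(\nabla\cl g,\cl G,\cl d)$ is produced by rescaling near-optimal competitors for the cell formula \eqref{2011}, extended by $Q'$-periodicity, and correcting the mean-gradient constraint $\int_{Q'}\nabla u\,\de x_\alpha=B$ via Alberti's Theorem \ref{Al}, as in Step 1 of the proof of Theorem \ref{T3d2d}. The surface upper bound follows from the standard reduction to $\cl g=\lambda\chi_U$ with polygonal $U$ and a transition-layer construction from near-optimal profiles for \eqref{2012}. The matching lower bounds are obtained by extracting a weak-$*$ limit $\mu$ of the energy measures of a minimizing sequence, rescaling at Lebesgue or jump points, and diagonalizing.

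I expect the main obstacle to lie not in the blow-up arguments themselves, which follow the by-now-standard CF template, but in the coordinated passage to the limit in the \emph{three} indices: the blow-up radius $\delta_k$, the minimizing index $n$, and the approximation index $k$ of $\cl d$. Concretely, when comparing competitors constructed at scale $\delta_k$ for $\cl d$ with those adapted to the piecewise-constant data $\cl d_k$, one must use \eqref{wishful} to bound the energy gap by $\|\cl d-\cl d_k\|_{L^p}$ times a quantity that is \emph{uniformly} controlled along the competing sequence, and then arrange the limits so that the Alberti-corrector contribution (whose $L^1$-norm is proportional to the mean-gradient defect) vanishes before the approximation of $\cl d$ is removed. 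Once this interleaving is handled, matching the bulk and surface upper bounds with the corresponding lower bounds at $\cL^2$-a.e.\@ $x_0\in\omega$ and $\cH^1$-a.e.\@ $x_0\in S(\cl g)$ yields the integral representation \eqref{201} with densities \eqref{2011} and \eqref{2012}.
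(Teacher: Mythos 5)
Your proposal matches the paper's approach exactly: the paper likewise handles the $x$-dependence of $W_{3d,2d}(\cdot,\cl d(x))$ by approximating $\cl d$ with piecewise-constant fields (Remark \ref{pwc}), invokes Proposition \ref{p100} to verify the CF hypotheses, and then states that these observations allow one to establish the representation theorem, explicitly without writing the details. Your additional discussion of the interleaving of the blow-up radius, the minimizing index, and the approximation index for $\cl d$ is a legitimate concern that the paper leaves implicit; you are supplying more detail than the paper itself provides, but the strategy is the same.
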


\section{The right-hand path}\label{sect:RHS}
In this section we relax our initial energy \eqref{E3d} by first passing to structured deformations and then carrying out the dimension reduction.

\subsection{Structured deformations}
For $g \in SBV(\Omega_\eps; \R3)$ and $G^{\backslash 3} \in L^1(\Omega_\eps; \R{3{\times}2})$, define
\begin{equation}\label{2018}
\begin{split}
 \mathcal{F}_{3d,SD}( g,G^{\backslash 3}) := \inf \bigg\{ & \liminf_{n\to\infty} \bigg(\int_{\Omega_\eps} W_{3d}(\nabla u_n) \,\de x + \int_{\Omega_\eps\cap S(u_n)} h_{3d} ( [u_n], \nu(u_n))\, \de\mathcal{H}^2(x)\bigg): \\
& u_n \to g \text{ in $L^1(\Omega_\eps;\R3)$}, \nabla u_n \wto (G^{\backslash 3} | \nabla_3 g) \,\, \text{in}\, \, L^p(\Omega_\eps;\R{3\times3}) \bigg\}.
\end{split}
\end{equation}
%\red{In view of Proposition \ref{B132}, the functional $\cF_{3d,SD}$ defined above is a $\Gamma$-lower limit (see Definition \ref{defLGL}).}
An integral representation for $ \mathcal{F}_{3d, SD}$ follows immediately from \cite[Theorem 2.17]{CF}.
As stated in Remark \ref{energies}, the coercivity assumption \eqref{coerc} grants boundedness of the gradients $\nabla u_n$ in $L^p$, thereby justifying the choice of weak convergence of $\{\nabla u_n\}$ in the definition \eqref{2018}.
In that definition, we are considering the case in which the limit is classical in the third component of the gradient, that is $\nabla_3 u_n\wto \nabla_3 g$.

\begin{theorem}\label{firstSD}
Under the hypotheses $(H_1)$--$(H_4)$, for $g \in SBV(\Omega_\eps; \R3)$ and $G^{\backslash 3} \in L^1(\Omega_\eps; \R{3{\times}2})$, the functional $\mathcal{F}_{3d,SD}( g,G^{\backslash 3})$ admits an integral representation of the form:
\begin{equation}\label{2017}
\mathcal{F}_{3d,SD}(g,G^{\backslash 3}) = \int_{\Omega_\eps} W_{3d,SD} (\nabla g, G^{\backslash 3}) \,\de x + \int_{\Omega_\eps\cap S(g)} h_{3d,SD}([g],\nu(g))\,\de\mathcal{H}^2(x),
\end{equation}
where, for $A \in \R{3{\times} 3}$ and $B^{\backslash 3} \in \R{3{\times }2}$,
\begin{equation}\label{2015}
\begin{split}
W_{3d,SD} (A, B^{\backslash 3} ) = \inf \bigg\{ & \int_Q W_{3d} (\nabla u)\,\de x + \int_{Q \cap S(u)} h_{3d} ([u],\nu(u))\,\de\mathcal {H}^2(x): \\
& u \in SBV(Q;\R3), \; u| _{\partial Q}= Ax, \; |\nabla u | \in L^p(Q), \; \int_Q \nabla u\,\de x = (B^{\backslash 3} | Ae_3)\bigg\}
\end{split}
\end{equation}
and, for $\lambda \in \R3$, $\nu \in \S2,$
\begin{equation}\label{2016}
h_{3d,SD} (\lambda, \nu)=\inf\bigg\{\int_{Q_\nu}h_{3d}([u], \nu(u))\,\de\mathcal{H}^2(x): \; u \in SBV(Q_\nu ; \R3), \nabla u = 0 \text { a.e.}, \; u|_{ \partial Q_\nu} = \gamma_{\lambda, \nu}\bigg\}.
\end{equation}
\end{theorem}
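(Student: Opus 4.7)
The plan is to deduce the theorem directly from the relaxation result \cite[Theorem 2.17]{CF}, and the main task is to explain why our setup falls within its scope. First I would check that the initial energy is of the form handled in \cite{CF}: the bulk density $W_{3d}$ satisfies the $p$-coercivity \eqref{coerc} and the $p$-Lipschitz condition \eqref{growth} (hence the upper bound \eqref{pgrowth} from Remark \ref{energies}(ii)), and the surface density $h_{3d}$ satisfies $(H_2)$--$(H_4)$, i.e.\ linear growth, positive $1$-homogeneity and subadditivity in the jump variable. These are exactly the hypotheses under which the relaxation theorem of \cite{CF} produces an integral representation, with bulk and surface densities given by cell formulas of the type \eqref{2015}--\eqref{2016}.

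The one genuinely new ingredient is the choice to relax only along sequences $\{u_{n}\}$ whose gradients satisfy $\nabla u_{n}\rightharpoonup (G^{\backslash 3}\vert\nabla_{3}g)$ in $L^{p}$, i.e.\ the third column of the limiting matrix field coincides with the classical third partial derivative $\nabla_{3}g$ of the limit $g$. I would point out that this constraint is a restriction to structured deformations of the form $(g,(G^{\backslash 3}\vert\nabla_{3}g))$ and is reflected in the cell problem \eqref{2015} through the condition $\int_{Q}\nabla u\,\de x=(B^{\backslash 3}\vert Ae_{3})$, whose third column $Ae_{3}$ is read off from the boundary datum $Ax$. Since this constraint is preserved under the Alberti-type construction of \cite{AL} used in \cite{CF} to produce approximating sequences with a prescribed average gradient (one only prescribes the first two columns, keeping the third column close to $Ae_{3}$), the proof of the upper bound goes through unchanged.

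For the lower bound I would follow the blow-up method of \cite{FM}, as done in \cite{CF}: extract a nonnegative Radon measure $\mu$ as the weak-$*$ limit of the energy measures associated with an optimal sequence, and compute its Radon--Nikod\'ym derivatives with respect to $\mathcal{L}^{3}$ and $\mathcal{H}^{2}\res S(g)$ at typical points. At points of approximate differentiability of $g$ the blow-up of an optimal sequence produces a competitor for \eqref{2015}, yielding $\tfrac{\de\mu}{\de\mathcal{L}^{3}}(x_{0})\geq W_{3d,SD}(\nabla g(x_{0}),G^{\backslash 3}(x_{0}))$; at points of $S(g)$ the blow-up produces a competitor for \eqref{2016} after the argument of \cite[Proposition 4.2]{CF} is used to enforce the affine boundary datum $\gamma_{\lambda,\nu}$.

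The only point that requires care, and which I expect to be the main technical obstacle, is the coupling between the two cell problems in the upper bound: one has to diagonalize between the $\Gamma$-liminf on the one hand and the Alberti-type adjustment used to match the constraint $\int_{Q}\nabla u\,\de x=(B^{\backslash 3}\vert Ae_{3})$ on the other, all while preserving the prescribed boundary values and the $L^{p}$-control of the gradients. Once this is arranged, exactly as in \cite[Section 4]{CF}, the abstract measure representation together with the explicit cell formulas yields \eqref{2017}, \eqref{2015} and \eqref{2016}.
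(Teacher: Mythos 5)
Your proposal is correct and takes the same route as the paper: Theorem~\ref{firstSD} is obtained by applying \cite[Theorem~2.17]{CF} directly, with the pair $(g,G^{\backslash 3})$ identified with the structured deformation $(g,(G^{\backslash 3}\vert\nabla_3 g))$ so that the cell constraint in \eqref{2015} is just the [CF] constraint $\int_Q\nabla u\,\de x=B$ specialized to $B=(B^{\backslash 3}\vert Ae_3)$. The paper in fact states this as an immediate consequence, with no further argument; the diagonalization and Alberti-type adjustments you worry about in your final paragraph are already internal to the proof of \cite[Theorem~2.17]{CF} and require no modification here, since choosing $G$ with a prescribed third column is simply a choice of the matrix field, not an additional constraint on the relaxation.
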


\begin{proposition}\label{energiesCF}
Let $W_{3d,SD}$ and $h_{3d,SD}$ be defined by \eqref{2015} and \eqref{2016}, respectively. Then
\begin{itemize}
\item[(i)] $W_{3d,SD}$ is locally Lipschitz continuous separately in $A$ and $B^{\backslash3}$, namely for every $B^{\backslash3}\in\R{3\times2}$ and every $A_1\in\R{3\times3}$ there exists a constant $C_1>0$ such that
$$|W_{3d,SD}(A_1,B^{\backslash 3})-W_{3d,SD}(A_2,B^{\backslash 3})|\leq C_1|A_1-A_2|$$
whenever $|A_1-A_2|$ is small enough; in particular, 
$$|W_{3d,SD}(A_1,B^{\backslash 3})-W_{3d,SD}(A_2,B^{\backslash 3})|\leq C_1|A_1-A_2|(1+|A_1|^{p-1}+|A_2|^{p-1}).$$

\noindent Similarly, for every $A\in\R{3\times3}$ and $B_1^{\backslash3}\in\R{3\times2}$ there exists a constant $C_2>0$ such that
$$|W_{3d,SD}(A,B_1^{\backslash 3})-W_{3d,SD}(A,B_2^{\backslash 3})|\leq C_2|B_1^{\backslash 3}-B_2^{\backslash 3}|$$
whenever $|B_1^{\backslash 3}-B_2^{\backslash 3}|$ is small enough; in particular, 
$$|W_{3d,SD}(A,B_1^{\backslash 3})-W_{3d,SD}(A,B_2^{\backslash 3})|\leq C_2|B_1^{\backslash 3}-B_2^{\backslash 3}|(1+|B_1^{\backslash 3}|^{p-1}+|B_2^{\backslash 3}|^{p-1});$$
\item[(ii)] $h_{3d,SD}$ satisfies $(H_2)$--$(H_4)$.
\end{itemize}
\end{proposition}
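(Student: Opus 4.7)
The plan for part (i) is a perturbation argument based on Theorem \ref{Al}. To establish local Lipschitz continuity of $W_{3d,SD}(\cdot,B^{\backslash 3})$ near a fixed $A_1$, I take $u\in SBV(Q;\R3)$ within $\rho$ of the infimum in \eqref{2015} defining $W_{3d,SD}(A_1,B^{\backslash 3})$ and build a competitor for $W_{3d,SD}(A_2,B^{\backslash 3})$ of the form
\[
v(x) := u(x) + (A_2-A_1)x + w(x).
\]
The affine shift restores the boundary trace $v|_{\partial Q}=A_2 x$ but perturbs the mean gradient by $A_2-A_1 \in \R{3\times 3}$, of which only the third column is compatible with the constraint $\int_Q \nabla v\,\de x = (B^{\backslash 3}|A_2 e_3)$. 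The corrector $w\in SBV(Q;\R3)$ is therefore chosen with $w|_{\partial Q}=0$ and $\int_Q\nabla w\,\de x = -((A_2-A_1)^{\backslash 3}|0)$. Alberti's Theorem \ref{Al}, applied with constant target gradient proportional to $(A_2-A_1)^{\backslash 3}$ on a slightly smaller cube and then trivially extended, produces such a $w$ whose $L^p$-norm of the gradient and total jump variation are both controlled by $C|A_2-A_1|$. Applying the Lipschitz-growth condition \eqref{growth} of $W_{3d}$ and the upper bound in $(H_2)$ for $h_{3d}$ to the energy difference of $v$ and $u$, together with the coercivity \eqref{coerc} bounding $\|\nabla u\|_{L^p(Q)}$ in terms of $W_{3d,SD}(A_1,B^{\backslash 3})$ (itself controlled by $|A_1|^p+|B^{\backslash 3}|^p$ via the trivial competitor $u(x)=A_1 x$ plus an Alberti correction), yields the local Lipschitz estimate after letting $\rho\to 0$ and swapping $A_1\leftrightarrow A_2$. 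The ``in particular'' polynomial-growth bound is then read off by tracking the powers of $|A_1|,|A_2|$ appearing in the constant. Continuity in $B^{\backslash 3}$ is proved identically with the simpler competitor $v := u+w'$, where $w'|_{\partial Q}=0$ and $\int_Q\nabla w'\,\de x = (B_2^{\backslash 3}-B_1^{\backslash 3}|0)$; no affine shift is required since the boundary data are unchanged.

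For part (ii), I verify the four hypotheses directly from \eqref{2016}. The upper bound in $(H_2)$ follows by testing with $u=\gamma_{\lambda,\nu}$ itself: its jump set is the planar slice $\{x\cdot\nu=0\}\cap Q_\nu$ of unit $\cH^2$-measure carrying $[u]=\lambda$, so $h_{3d,SD}(\lambda,\nu)\leq h_{3d}(\lambda,\nu)\leq c_h|\lambda|$. For the lower bound, any admissible $u$ satisfies $\nabla u=0$ $\cL^3$-a.e., so the Gauss--Green formula for $SBV$ functions gives
\[
\int_{S(u)\cap Q_\nu} [u]\otimes\nu_u\,\de\cH^2 \;=\; \int_{\partial Q_\nu}\gamma_{\lambda,\nu}\otimes\nu_{Q_\nu}\,\de\cH^2 \;=\; \lambda\otimes\nu,
\]
so $\int_{S(u)\cap Q_\nu}|[u]|\,\de\cH^2\geq|\lambda|$, and combining with the lower bound in $(H_2)$ for $h_{3d}$ gives $h_{3d,SD}(\lambda,\nu)\geq|\lambda|/c_h$. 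Positive $1$-homogeneity $(H_3)$ is immediate from the admissibility of $tu$ for $(t\lambda,\nu)$ whenever $u$ is admissible for $(\lambda,\nu)$, combined with $(H_3)$ for $h_{3d}$. Subadditivity $(H_4)$ is obtained by choosing near-optimal $u_i$ for $(\lambda_i,\nu)$, translating one of them by an arbitrarily small amount so that their jump sets become $\cH^2$-disjoint, and then using $u_1+u_2$ as a competitor for $(\lambda_1+\lambda_2,\nu)$, whereupon $(H_4)$ for $h_{3d}$ delivers the desired inequality.

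The principal obstacle is the corrector construction in part (i). The identity $\int_Q\nabla w\,\de x = -\int_{S(w)}[w]\otimes\nu_w\,\de\cH^2$, valid for $w\in SBV(Q;\R3)$ with vanishing trace on $\partial Q$, forces $w$ to carry a nontrivial jump set whose signed mass exactly cancels the unwanted column of $A_2-A_1$ (respectively of $B_2^{\backslash 3}-B_1^{\backslash 3}$). This rules out any smooth compensator, and the bulk and surface estimates on $w$ must be obtained simultaneously. Theorem \ref{Al} supplies the needed abstract existence with the correct $L^1$-controls, but reconciling the zero trace with the prescribed non-zero mean gradient requires the localisation-and-gluing step indicated above; this is the only non-routine ingredient of the proof, everything else following from the growth hypothesis \eqref{growth} and the linear bounds on $h_{3d}$ in $(H_2)$.
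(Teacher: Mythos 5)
Your argument is correct and, in substance, the same as the one the paper invokes: the paper's proof of Proposition~\ref{energiesCF} consists of a single sentence referring the reader to \cite[Proposition~5.2]{CF} for part~(i) and noting that part~(ii) is inherited from $h_{3d}$, and what you have written is precisely the expansion of that citation (affine shift plus a zero-trace corrector to restore the mean-gradient constraint, then the Lipschitz bound \eqref{growth} and the linear bound in $(H_2)$; Gauss--Green for the lower bound in $(H_2)$, $1$-homogeneity and subadditivity directly from the cell formula).

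One remark on the piece you flag as ``the only non-routine ingredient'': the corrector $w$ need not be produced via Alberti's Theorem~\ref{Al}. Writing $M:=\bigl((A_2-A_1)^{\backslash 3}\,\big|\,0\bigr)$, the explicit choice $w(x):=-M x/\lvert Q_\delta\rvert$ on a slightly shrunken cube $Q_\delta\Subset Q$, extended by zero to $Q$, already has vanishing trace on $\partial Q$, constant gradient with $\int_Q\nabla w\,\de x=-M$, $\lVert\nabla w\rVert_{L^p(Q)}\leq C\lvert A_1-A_2\rvert$, and jump set $\partial Q_\delta$ with total jump variation $\int_{\partial Q_\delta}\lvert[w]\rvert\,\de\cH^2\leq C\lvert A_1-A_2\rvert$. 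This is exactly the kind of construction the paper uses elsewhere via \cite[Lemma~4.3]{M07} (see the argument following~\eqref{1311}), and it spares you the localization-and-gluing step required to reconcile Alberti's output with the zero boundary trace. The same remark applies to the corrector $w'$ in the $B^{\backslash 3}$-variable. Everything else in your write-up (the use of the trivial competitor to bound $W_{3d,SD}(A_1,B^{\backslash 3})$, the Hölder step with \eqref{growth}, testing $h_{3d,SD}$ with $\gamma_{\lambda,\nu}$, and the subadditivity argument) is standard and sound.
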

\begin{proof}
The proof of part (i) follows that of \cite[Proposition 5.2]{CF}; part (ii) follows from the corresponding properties of $h_{3d}$. \qed
\end{proof}

\subsection{Dimension reduction}
We now apply dimension reduction to the energy $\cF_{3d,SD}$ defined in \eqref{2017}.
As we did in Section \ref{LHSDR}, we rescale the variables by $(x_\alpha,x_3)\mapsto(x_\alpha,x_3/\eps)$, thereby replacing the domain of integration $\Omega_\eps$ by $\Omega$, and we rescale the energy $\cF_{3d,SD}$ by dividing it by $\eps$.
Therefore, given $(\overline{g}, \overline{G}, \overline{d}) \in SBV (\omega; \R3) \times L^1(\omega; \R{3{\times}2}) \times L^p(\omega; \R3)$, we seek an integral representation for the following relaxed energy
\begin{equation}\label{2019}
\begin{split} 
\mathcal{F}_{3d, SD, 2d}( \overline{g}, \overline{G}, \overline{d}) := \inf \bigg\{ & \liminf_{n\to\infty} \bigg(\int_\Omega W_{3d,SD} \Big(\Big(\nabla_\alpha g_n \Big| \frac{\nabla_3 g_n}{\eps_n}\Big), \cl G\Big)\de x \\
&\phantom{\liminf}+ \int_{\Omega\cap S(u_n)} h_{3d, SD} \Big([g_n], \Big(\nu_\alpha (g_n)\Big| \frac{\nu_3(g_n)}{\eps_n}\Big)\Big)\de \mathcal{H}^2(x)\bigg): \\
& g_n\to \overline{g} \text{ in } L^1(\Omega;\R3),\;  \int_I \frac{\nabla_3 g_n}{\eps_n} \,\de x_3 \wto \overline{d}\text{ in } L^p(\omega;\R3),\; \nu(g_n)\cdot e_3 = 0 \bigg \}.
\end{split}
\end{equation}
%\red{In view of Proposition \ref{B132}, the functional $\cF_{3d,SD,2d}$ defined above is a $\Gamma$-lower limit (see Definition \ref{defLGL}).}
An analogue of Remark \ref{pwc} can be made with the roles of $\cl G$ and $\cl d$ interchanged and with Proposition \ref{energiesCF} in place of Proposition \ref{p100}, and this provides a proof of the following representation theorem.

\begin{theorem}\label{secondright}
Under the hypotheses $(H_1)$--$(H_4)$, given $(\overline{g}, \overline{G}, \overline{d}) \in SBV (\omega; \R3) \times L^1(\omega; \R{3{\times}2}) \times L^p(\omega; \R3)$,
the relaxed energy $\mathcal{F}_{3d, SD, 2d}$ defined in \eqref{2019} admits the integral representation
\begin{equation}\label{301}
\mathcal{F}_{3d, SD, 2d}(\cl g, \cl G, \cl d) = \int_\omega W_{3d, SD, 2d} (\nabla \cl g, \cl G, \cl d) \; \de x_\alpha + \int_{\omega\cap S(\cl g)} h_{3d, SD, 2d} ([\cl g], \nu(\cl g))\; \de \cH^1(x_\alpha),
\end{equation}
where, for $A, B \in \R{3{\times}2}, d \in \R3,$
\begin{equation}\label{2013}
\begin{split}
\!\! W_{3d, SD, 2d}(A, B, d):= \inf \bigg\{ & \int_{Q'} W_{3d, SD} ((\nabla u(x_\alpha)|z(x_\alpha)),B) \,\de x_\alpha + \int_{Q' \cap S(u)} \!\! h_{3d, SD} ([u], \tilde\nu(u))\; \de \cH^1(x_\alpha): \\
& u \in SBV(Q'; \R3),\; |\nabla u| \in L^p(Q'), \; u|_{\partial Q'} = Ax_\alpha, \\
& z \in L^p_{Q' - \mathrm{per}}(\R2; \R3),\; \int_{Q'} z \,\de x_\alpha = d \bigg\},
\end{split}
\end{equation}
and, for $\lambda \in \R3, \eta \in \S1$,
\begin{equation}\label{2014}
h_{3d, SD, 2d}(\lambda, \eta)= \inf \bigg\{ \int_{Q'_\eta} h_{3d, SD}( [u], \nu(u))\; \de \cH^1(x_\alpha): \; u \in SBV(Q'; \R3), \; \nabla u = 0\; a.e., \; u|_{\partial Q'} = \gamma_{\lambda, \eta}\bigg\}.
\end{equation}
\end{theorem}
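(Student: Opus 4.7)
The plan is to mirror the proof of Theorem \ref{T3d2d}, now starting from the relaxed energy $\cF_{3d,SD}$ provided by Theorem \ref{firstSD} instead of the original energy \eqref{107}. The surface density $h_{3d,SD}$ depends only on the jump and the normal and, by Proposition \ref{energiesCF}(ii), already satisfies $(H_2)$--$(H_4)$, so the surface part of the argument (the analogues of Steps 2 and 4 in the proof of Theorem \ref{T3d2d}) transfers almost verbatim, with $h_{3d,2d}$ replaced by $h_{3d,SD}$ and the cell formula \eqref{107c} replaced by \eqref{2014}. The genuinely new feature is that the bulk density $W_{3d,SD}((\nabla_\alpha g|\nabla_3 g/\eps), \cl G(x_\alpha))$ depends on the in-plane variable $x_\alpha$ through $\cl G \in L^1(\omega; \R{3\times 2})$, rather than through the $L^p$ field $\cl d$ as in Theorem \ref{T3d2d}.

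To handle this $x_\alpha$-dependence I would first reduce to the case where $\cl G$ is piecewise constant via an approximation $\cl G_k \to \cl G$ in $L^1(\omega; \R{3\times 2})$, exactly in the spirit of Remark \ref{pwc} and its treatment of $\cl d$, invoking \cite[Lemma 2.9]{CF} and the scheme of \cite{MS}. For each piecewise constant $\cl G_k$ one can localize to sets on which $\cl G_k \equiv B$ is constant, recovering a setting in which the localized functional is the trace of a Radon measure; the proof of this fact parallels that of Proposition \ref{569}, using the coercivity and $p$-growth of $W_{3d,SD}$ (inherited from $W_{3d}$ via Theorem \ref{firstSD}) together with the $(H_2)$ bound for $h_{3d,SD}$. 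Then I would carry out the blow-up method of \cite{FM} in four steps, yielding Radon-Nikod\'ym upper bounds for $\cF_{3d,SD,2d}(\cl g, \cl G, \cl d; \cdot)$ with respect to $\cL^2$ and $\cH^1 \res S(\cl g)$, matched by lower bounds for the weak-$*$ limit of the localized energies. In each bulk Radon-Nikod\'ym computation the competitor is constructed from a minimizer of the cell formula \eqref{2013}, transplanted around a Lebesgue point $x_0$ of $\nabla \cl g$, $\cl G$, and $\cl d$, and then corrected via Alberti's theorem (Theorem \ref{Al}) to enforce the averaged weak convergence $\int_I \nabla_3 g_n/\eps_n \, \de x_3 \wto \cl d$ in $L^p(\omega; \R3)$, exactly as in Step 1 of the proof of Theorem \ref{T3d2d}.

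The main obstacle will be removing the piecewise constant approximation of $\cl G$: since Proposition \ref{energiesCF}(i) only yields a \emph{local} Lipschitz bound of $W_{3d,SD}$ in its second argument, one must combine it with the $p$-growth estimate to dominate $|W_{3d,SD}(\cdot, \cl G) - W_{3d,SD}(\cdot, \cl G_k)|$ by an integrable quantity that vanishes after taking the iterated limits $n \to \infty$, $\delta \to 0^+$, and finally $k \to \infty$ in that order. This is precisely the adaptation indicated in the parallel setting for $\cl d$ by Remark \ref{pwc}, and is what permits the standard relaxation machinery of \cite{CF} and the blow-up scheme of \cite{FM} to be applied to the present $x_\alpha$-dependent density. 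Once this is in place, the four blow-up estimates pin down both integrands: the bulk one as \eqref{2013} and the surface one as \eqref{2014}, yielding the integral representation \eqref{301}.
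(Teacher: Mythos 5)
Your proposal matches the paper's approach: the authors indeed prove Theorem \ref{secondright} by mirroring the dimension-reduction argument of Theorem \ref{T3d2d} (blow-up, Radon-measure localization as in Proposition \ref{569}), starting from $\cF_{3d,SD}$, with the spatial dependence through $\cl G$ handled by piecewise-constant approximation exactly as in Remark \ref{pwc} but with the roles of $\cl G$ and $\cl d$ interchanged and with Proposition \ref{energiesCF} replacing Proposition \ref{p100}. The only minor imprecision is that Proposition \ref{energiesCF}(i) already supplies the global $p$-growth estimate you need, not merely the local Lipschitz bound.
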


\section {Comparison of the relaxed energy densities for the left- and right-hand paths}\label{sect:comp_ex}
In this section we discuss the relationship between the doubly relaxed energy densities \eqref{201} and \eqref{301} obtained in Sections \ref{sect:LHS} and \ref{sect:RHS}.
At present, at the level of generality of Theorems \ref{secondleft} and \ref{secondright}, an explicit comparison in terms of whether one of the two energies is smaller than the other is not available.
Nonetheless, quantitative results can be obtained when the initial energy \eqref{E3d} has a a specific form, namely it is a purely interfacial energy ($W_{3d}=0$) with a specific choice of the interfacial energy density $h_{3d}$.

Our aim then is to compute explicitly the densities provided by the cell formulas \eqref{107b}, \eqref{107c}, \eqref{2011}, \eqref{2012}, \eqref{2015}, \eqref{2016}, \eqref{2013}, and \eqref{2014} starting from the initial, purely interfacial, energy density (see \cite{nogap,OP15})
\begin{equation}\label{purelyinterfacial}
h_{3d}(\lambda,\nu)=|\lambda \cdot \nu|.
\end{equation}

\paragraph{\textbf{The left-hand path}}
Let us consider \eqref{purelyinterfacial} and let $(A,d)\in\R{3\times2}\times\R3$; then \eqref{107b} reads
\begin{equation}\label{1231}
W_{3d,2d}(A,d)= \inf\bigg\{\int_{Q^{\prime}\cap S(u)}|[u]\cdot\tilde\nu(u)|\,\de\mathcal{H}^{1}(x_\alpha): u\in SBV(Q^{\prime};\mathbb{R}^{3}), u|_{\partial Q^{\prime}}(x_{\alpha})=Ax_{\alpha}\bigg\}	=0.
\end{equation} 
The first equality is a consequence of \eqref{purelyinterfacial}; the second one follows since the affine function $u(x_\alpha) = Ax_\alpha$ is admissible and makes the integral vanish.

Let us now turn to \eqref{107c}: we claim that for $\lambda\in\R3$, $\eta\in\mathbb{S}^1$, the surface energy density $h_{3d,2s}$ reads
\begin{equation}\label{107c1}
h_{3d,2d}(\lambda,\eta)=|\lambda\cdot\tilde\eta|.
\end{equation}
In fact, the function $u(x_\alpha)=\gamma_{\lambda,\eta}(x_\alpha)$ (see \eqref{gammale}) is admissible and it provides an upper bound; to obtain a lower bound, one uses the following version of the Gauss-Green formula in $SBV$ (see \cite[Theorem 3.36]{AFP} and also \cite{DPO95,V,VH}): for $u\in SBV(\Omega;\R3)$ and $U\subset\Omega$, there holds 
\begin{equation}\label{GGformula}
\int_{U\cap S(u)} [u]\cdot\nu(u)\,\de\cH^{2}(x)+\int_U \div u\,\de x-\int_{\partial U} u\cdot\nu_U\,\de\cH^2(x)=0.
\end{equation}
Considering the integrand in \eqref{107c}, by using the properties of the absolute value and \eqref{GGformula}, the same $u(x_\alpha)=\gamma_{\lambda,\eta}(x_\alpha)$ gives
\begin{equation}\label{107c2}
\int_{Q_\eta'\cap S(u)} |[u]\cdot\tilde\nu(u)|\,\de\cH^1(x_\alpha)\geq \left|\int_{Q_\eta'\cap S(u)} [u]\cdot\tilde\nu(u)\,\de\cH^1(x_\alpha)\right|= \left|\int_{\partial Q_\eta'} u\cdot\tilde\nu_{Q_\eta'}\,\de\cH^1(x_\alpha)\right| = |\lambda\cdot\tilde\eta|,
\end{equation}
which completes the proof of \eqref{107c1}.
Given $(\cl{u},\cl{d}) \in SBV(\omega;\mathbb{R}^{3}) \times L^{p}(\omega ;\mathbb{R}^{3})$, the relaxed energy \eqref{F3d2d} reads then
\begin{equation}\label{ex3d2d}
\cF_{3d,2d}(\cl{u},\cl{d})=\widehat\cF_{3d,2d}(\cl u):=\int_{\omega \cap S(\cl{u})} |[\cl{u}]\cdot\tilde\nu(\cl{u})|\,\de\mathcal{H}^{1}(x_\alpha),
\end{equation}
where we notice that the dependence on $\cl d$ is lost.

Next, we claim that, for $A,B\in\R{3\times2}$, $d\in\R3$, the bulk density \eqref{2011} is given by $W_{3d,2d,SD}(A, B, d)=\widehat W_{3d,2d,SD}(A,B)$, which is the relaxation of $h_{3d,2d}$ in \eqref{107c1}, and reads
\begin{equation}\label{2011a}
\begin{split}
\widehat W_{3d,2d,SD}(A, B)=& \inf \bigg\{ \int_{Q'\cap S(u)} |[u]\cdot \tilde\nu(u)|\, \de \cH^1(x_\alpha):  u \in SBV(Q'; \R3), \; u|_{\partial Q'} = Ax_\alpha, \\
& \phantom{\inf\Bigg\{} \int_{Q'} \nabla u \,\de x_\alpha = B, \; |\nabla u| \in L^p(Q') \bigg\};
\end{split}
\end{equation}
notice again that this is independent of $d$.
We prove that, for $A,B\in\R{3\times2}$,
\begin{equation}\label{2011b}
\widehat W_{3d,2d,SD}(A, B)=\big|\tr\big((A|0)-(B|0)\big)\big|=|A_{11}+A_{22}-B_{11}-B_{22}|.
\end{equation}
Again as before, we prove \eqref{2011b} by obtaining upper and lower bounds for $\widehat W_{3d,2d,SD}$.
Let $u$ be an admissible function for \eqref{2011a} and define $u_\alpha:Q' \to \R2$ by $u_\alpha(x_\alpha):= (u_1(x_\alpha), u_2(x_\alpha))$.
Since
\begin{equation}\label{1292}
\int_{Q' \cap S(u)} |[u]\cdot\tilde{\nu}(u)|\, \de \cH^1(x_\alpha) = \int_{Q' \cap S(u_\alpha)} |[u_\alpha]\cdot\nu(u_\alpha)|\, \de \cH^1(x_\alpha),
\end{equation}
the function $u_\alpha$ is admissible for the minimum problem
\begin{equation}\label{1293}
 \inf \bigg\{ \int_{Q'\cap S(v)} \!\! |[v]\cdot \nu(v)|\, \de \cH^1(x_\alpha):  v\in SBV(Q'; \R2), \, v|_{\partial Q'} = \widehat{A}x_\alpha,\, \int_{Q'} \nabla v\,\de x_\alpha = \widehat{B},\, |\nabla v| \in L^p(Q') \bigg\}, 
\end{equation}
where $\widehat{A}$ and $\widehat{B}$ denote the upper $2{\times}2$ sub-matrices of $A$ and $B$, respectively.
The lower bound for $\widehat W_{3d,2d,SD}$ then follows immediately from the result in \cite{nogap,OP15}, where it is proved that the infimum in \eqref{1293} is given by $|\tr(\widehat A-\widehat B)|$.

In order to derive the upper bound for $\widehat W_{3d,2d,SD}$, fix $\eps>0$ and let $v_\epsilon\in SBV(Q';\R2)$ admissible for \eqref{1293} be such that
\begin{equation}\label{1311}
\int_{Q'\cap S(v_\epsilon)} |[v_\epsilon]\cdot \nu(v_\epsilon)|\, \de \cH^1(x_\alpha) \leq |\tr (\widehat{A} - \widehat{B})| + \epsilon.
\end{equation}
Using Lemma $4.3$ in \cite{M07}, we can construct a function $v \in SBV(Q')$ such that 
$$ v|_{\partial Q'} = e_3\cdot Ax_\alpha, \qquad \nabla v = (B_{31}, B_{32})\quad \text{$\cL^2$-a.e. in $Q'$}.$$
Then, the function $ w_\epsilon\in SBV( Q' ; \R3)$ defined by $w_\epsilon(x_\alpha) := (v_\epsilon(x_\alpha), v(x_\alpha))$ is admissible for \eqref{2011a}, and by \eqref{1292} and \eqref{1311} we conclude that
\begin{equation}\label{1319}
\int_{Q'\cap S(w_\epsilon)} |[w_\epsilon]\cdot \tilde{\nu}(w_\epsilon))|\, \de \cH^1(x_\alpha) \leq |\tr(\widehat{A} - \widehat{B})| + \epsilon,
\end{equation}
and the result follows from the arbitrariness of $\eps$.
Formula \eqref{2011b} is therefore proved.

Finally, we observe that the same strategy used to prove \eqref{107c1} can be used to show that for $\lambda \in \R3$ and $\eta \in \S1,$
\begin{equation}\label{2012a}
h_{3d, 2d, SD} (\lambda, \eta)= |\lambda\cdot\tilde\eta|.
\end{equation}
Thus, in view of \eqref{2011b} and \eqref{2012a}, given $(\cl g,\cl G,\cl d)\in SBV(\omega;\R3){\times}L^1(\omega;\R{3{\times}2})\times L^1(\omega; \R3)$, the functional $\cF_{3d,2d,SD}$ in \eqref{201} can be written as
\begin{equation}\label{ex3d2dSD}
\begin{split}
\cF_{3d,2d,SD}(\cl g,\cl G,\cl d)= & \widehat\cF_{3d,2d,SD}(\cl g,\cl G):= \int_\omega |\tr((\nabla\cl g|0)-(\cl G|0))|\,\de x_\alpha+\int_{\omega\cap S(\cl g)} |[\cl g]\cdot\tilde\nu(\cl g)|\,\de\cH^1(x_\alpha) \\
=& \int_\omega \Big|\frac{\partial\cl g_1}{\partial x_1}+\frac{\partial\cl g_2}{\partial x_2}-\cl G_{11}-\cl G_{22}\Big|\,\de x_\alpha+\int_{\omega\cap S(\cl g)} |[\cl g_1]\nu_1(\cl g)+[\cl g_2]\nu_2(\cl g)|\,\de\cH^1(x_\alpha).
\end{split}
\end{equation}

\paragraph{\textbf{The right-hand path}}
Considering \eqref{purelyinterfacial}, the explicit formulas for the energy densities $W_{3d, SD}$ and $h_{3d, SD}$ in \eqref{2015} and \eqref{2016} were derived in \cite{nogap,OP15} (see also \cite{S17}); denoting by $M^i$, $i=1,2,3$ the columns of a matrix $M\in\R{3\times3}$, for $A\in\R{3\times3}$ and $B^{\backslash 3} \in \R{3\times 2}$ we have that 
\begin{equation}\label{1193}
W_{3d, SD}( A, B^{\backslash 3}) = |\tr(A-(B^{\backslash 3}|A ^3))|,
\end{equation}
and, for $\lambda \in \R3$ and $\nu \in \S2$,
\begin{equation}\label{1194}
h_{3d,SD} (\lambda, \nu) = |\lambda\cdot\nu|.
\end{equation}
Therefore, for $(g,G^{\backslash 3})\in SBV(\Omega; \R3)\times L^1(\Omega; \R{3{\times}2})$, plugging \eqref{1193} and \eqref{1194} in \eqref{2017} gives
\begin{equation}\label{2017a}
\mathcal{F}_{3d,SD}( g,G^{\backslash 3}) = \int_\Omega \Big|\frac{\partial g_1}{\partial x_1}+\frac{\partial g_2}{\partial x_2}-G_{11}^{\backslash 3}-G_{22}^{\backslash 3}\Big|\,\de x + \int_{\Omega\cap S(g)} |[g]\cdot\nu(g)|\,\de\mathcal{H}^2(x),
\end{equation}
Let us now turn to \eqref{2013}.
Let $A, B \in \R{3{\times}2}$, $d \in \R3$, and let $(u,z)$ be an admissible pair of functions for the minimization problem that defines $W_{3d,SD,2d}$; using \eqref{1193} and \eqref{1194}, and again the properties of the absolute value and the Gauss-Green formula \eqref{GGformula}, we can estimate
\begin{equation}\label{1195}
\begin{split}
\!\! \int_{Q'} |\tr((\nabla u | z) - (B | z))|\, \de x_\alpha &+ \int_{Q' \cap S(u)} |[u]\cdot \tilde\nu(u)|\,\de \cH^1(x_\alpha)\\
& \geq \bigg|\int_{Q'} \tr((\nabla u | z)-(B | z))\,\de x_\alpha\bigg| +\bigg|\int_{Q' \cap S(u)} [u]\cdot\tilde\nu(u)\,\de \cH^1(x_\alpha)\bigg|\\
& \geq \bigg|\int_{Q'} \tr((\nabla u | z)-(B | z))\,\de x_\alpha+\int_{Q' \cap S(u)} [u]\cdot\tilde\nu(u)\,\de \cH^1(x_\alpha)\bigg|\\
& =\bigg|\tr\bigg(\int_{Q'} \nabla (u_1,u_2)\,\de x_\alpha+ \int_{Q' \cap S(u)} [u]\otimes\tilde\nu(u)\,\de \cH^1(x_\alpha)\bigg)-B_{11}-B_{22}\bigg|\\
& =\bigg|\tr\bigg(\int_{Q'} (u_1, u_2) \otimes \nu_{\partial Q'}\,\de \cH^1(x_\alpha)\bigg)-B_{11}-B_{22}\bigg|\\
& =|A_{11}+A_{22}-B_{11}-B_{22}|
\end{split}
\end{equation}
where the last equality follows from the condition $u|_{\partial Q'}(x_\alpha) = Ax_\alpha$.
Since the affine function $u(x_\alpha) = A x_\alpha$ is admissible, the lower bound \eqref{1195} is attained, so that the density in \eqref{2013} reads
\begin{equation}\label{2013a}
W_{3d, SD, 2d}(A, B, d)=|A_{11}+A_{22}-B_{11}-B_{22}|=|\tr(\widehat A-\widehat B)|=:\widehat W_{3d, SD, 2d}(A, B).
\end{equation}
Finally, with the same reasoning as before, it is easy to see that the infimum in \eqref{2014} is attained at $u(x_\alpha)=\gamma_{\lambda, \eta}(x_\alpha)$, so that
\begin{equation}\label{2014a}
h_{3d, SD, 2d}(\lambda,\eta)=|\lambda\cdot\tilde\eta|.
\end{equation}
Thus, in view of \eqref{2013a} and \eqref{2014a}, given $(\cl g,\cl G,\cl d)\in SBV(\omega;\R3){\times}L^1(\omega;\R{3{\times}2})\times L^1(\omega; \R3)$, the functional $\cF_{3d,SD,2d}$ in \eqref{301} can be written as
\begin{equation}\label{ex3dSD2d}
\begin{split}
\cF_{3d,SD,2d}(\cl g,\cl G,\cl d)= & \widehat\cF_{3d,SD,2d}(\cl g,\cl G):= \int_\omega |\tr(\widehat{\nabla\cl g}-\widehat{\cl G})|\,\de x_\alpha+\int_{\omega\cap S(\cl g)} |[\cl g]\cdot\tilde\nu(\cl g)|\,\de\cH^1(x_\alpha) \\
=& \int_\omega \Big|\frac{\partial\cl g_1}{\partial x_1}+\frac{\partial\cl g_2}{\partial x_2}-\cl G_{11}-\cl G_{22}\Big|\,\de x_\alpha+\int_{\omega\cap S(\cl g)} |[\cl g_1]v_1(\cl g)+[\cl g_2]v_2(\cl g)|\,\de\cH^1(x_\alpha).
\end{split}
\end{equation}
Notice that we have proved that the bulk energy densities in \eqref{2011b} and \eqref{2013a} coincide, and the same holds true for the surface energy densities \eqref{2012a} and \eqref{2014a}.
Thus, we have proved the following result.
\begin{proposition}\label{S1}
Let $W_{3d}=0$ and $h_{3d}$ as in \eqref{purelyinterfacial}.
Then, the doubly relaxed energies \eqref{201} and \eqref{301} coincide and are both given by \eqref{ex3d2dSD} or \eqref{ex3dSD2d}.
\end{proposition}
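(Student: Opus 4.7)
The plan is to evaluate, one by one, each of the four cell formulas \eqref{2011}, \eqref{2012}, \eqref{2013}, and \eqref{2014} under the choice $W_{3d}\equiv 0$ and $h_{3d}(\lambda,\nu)=|\lambda\cdot\nu|$, and then compare the two doubly relaxed energies.

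For the left-hand path, I first record that the intermediate densities $W_{3d,2d}$ and $h_{3d,2d}$ have already been computed above: $W_{3d,2d}(A,d)=0$ (take $u(x_\alpha)=Ax_\alpha$ together with the constant $z\equiv d$) and $h_{3d,2d}(\lambda,\eta)=|\lambda\cdot\tilde\eta|$ (upper bound via $\gamma_{\lambda,\eta}$; matching lower bound via the $SBV$ Gauss--Green identity \eqref{GGformula}). The substantive step is then the second relaxation, namely the computation of $W_{3d,2d,SD}(A,B,d)$ from \eqref{2011}. Since the integrand $h_{3d,2d}([u],\nu(u))=|[u]\cdot\tilde\nu(u)|$ depends only on the planar components $u_\alpha=(u_1,u_2)$, the cell problem reduces, by passing from $u$ to $u_\alpha$ and from $A$, $B$ to their upper $2\times 2$ sub-matrices $\widehat A$, $\widehat B$, to the planar structured-deformation cell formula already solved in \cite{nogap,OP15}, whose minimum is $|\tr(\widehat A-\widehat B)|$. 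The lower bound is therefore immediate; the matching upper bound is obtained by lifting a near-minimizer $v_\epsilon\in SBV(Q';\R2)$ of the planar problem to an admissible three-component competitor by prescribing the third component via Lemma~4.3 of \cite{M07}, so that the boundary condition $u|_{\partial Q'}=Ax_\alpha$ and the average constraint $\int_{Q'}\nabla u\,\de x_\alpha=B$ are both satisfied. For $h_{3d,2d,SD}$, the same $\gamma_{\lambda,\eta}$-plus-Gauss--Green argument that produced \eqref{107c1} delivers \eqref{2012a}.

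For the right-hand path, I would start from the explicit formulas \eqref{1193}--\eqref{1194} for $W_{3d,SD}$ and $h_{3d,SD}$, which are available from \cite{nogap,OP15,S17}. Evaluating \eqref{2013} then proceeds by using the triangle inequality for the absolute value to collapse the bulk and surface integrals into a single modulus, rewriting its argument as the trace of $\nabla(u_1,u_2)$ plus the interfacial matrix $[u_\alpha]\otimes\nu(u_\alpha)$, and applying the Gauss--Green identity \eqref{GGformula} componentwise to reduce the interior contribution to the boundary integral $\tr\int_{\partial Q'}(u_1,u_2)\otimes\nu_{\partial Q'}\,\de\cH^1=A_{11}+A_{22}$. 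The linear competitor $u(x_\alpha)=Ax_\alpha$ saturates the resulting lower bound, yielding \eqref{2013a}, while the cell formula \eqref{2014} for $h_{3d,SD,2d}$ is again handled by the $\gamma_{\lambda,\eta}$ plus Gauss--Green argument, giving \eqref{2014a}.

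Comparing the outputs, the bulk density \eqref{2011b} matches \eqref{2013a} and the surface density \eqref{2012a} matches \eqref{2014a}; hence the two doubly relaxed energies \eqref{201} and \eqref{301} coincide and both equal the right-hand side of \eqref{ex3d2dSD}, equivalently \eqref{ex3dSD2d}. I expect the main technical point to be precisely the reduction step in $W_{3d,2d,SD}$: one must recognize that the first relaxation has already produced an energy whose integrand depends only on the planar components of the test function, so that the second relaxation amounts to a genuinely two-dimensional (in the target) structured-deformation cell problem, which then matches the one solved in \cite{nogap,OP15}. Without this observation the lower bound would have to be redone from scratch, and the compatibility with the right-hand path would become considerably less transparent.
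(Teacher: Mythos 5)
Your proposal is correct and follows essentially the same route as the paper. You compute the intermediate densities $W_{3d,2d}=0$ and $h_{3d,2d}(\lambda,\eta)=|\lambda\cdot\tilde\eta|$ exactly as in \eqref{1231}--\eqref{107c2}; you handle $W_{3d,2d,SD}$ exactly as the paper does, i.e., by restricting to the planar components $u_\alpha$, invoking the explicit planar result of \cite{nogap,OP15} for the lower bound $|\tr(\widehat A-\widehat B)|$, and lifting a near-minimizer $v_\epsilon$ together with a third component produced by Lemma~4.3 of \cite{M07} for the upper bound; you handle the right-hand cell formula \eqref{2013} by the same chain of estimates as \eqref{1195} (modulus of the integral, then superadditivity of the modulus, then the $SBV$ Gauss--Green identity \eqref{GGformula} applied to $(u_1,u_2)$), saturated by the affine competitor; and the two jump densities \eqref{2012a}, \eqref{2014a} are obtained by the same $\gamma_{\lambda,\eta}$ plus Gauss--Green argument. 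Your closing observation --- that after the first (DR) relaxation the integrand sees only the planar components of $u$, so the second (SD) relaxation is a genuinely two-dimensional cell problem --- is precisely the structural reason the paper's reduction to \eqref{1293} works, and is a fair summary of the key step.
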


\section{A one-step approach to dimension reduction in the context of structured deformations}\label{sect:6}
In this section, we recall an alternative procedure for dimension reduction in the context of structured deformations already available in the literature \cite{MS}.
The basic function spaces considered for this approach are the spaces \cite{CLT1,CLT2}
\begin{equation}
\begin{split}
SBV^2(\Omega;\R3):= & \{u\in SBV(\Omega;\R3): \nabla u\in SBV(\Omega;\R{3\times3})\}, \\
BV^2(\Omega;\R3):= & \{u\in BV(\Omega;\R3): \nabla u\in BV(\Omega;\R{3\times3})\}.
\end{split}
\end{equation}
For a function $v\in SBV^2(\Omega_\eps;\R3)$, the initial energy considered in \cite{MS} is of the form
\begin{equation}\label{enMS}
E^{MS}_\eps(v):=\int_{\Omega_\eps} W(\nabla v,\nabla^2 v)\,\de x+\int_{\Omega_\eps\cap S(v)} \Psi_1([v],\nu(v))\,\de\cH^2(x)+\int_{\Omega_\eps\cap S(\nabla v)} \Psi_2([\nabla v],\nu(\nabla v))\,\de\cH^2(x),
\end{equation}
where the bulk energy density $W\colon\R{3\times3}{\times}\R{3\times3\times3}\to[0,+\infty)$ is continuous, coercive, and has growth of order $p=1$, and the surface energy densities $\Psi_1\colon\R3{\times}\S{2}\to[0,+\infty)$ and $\Psi_2\colon\R{3\times3}{\times}\S2\to[0,+\infty)$ are continuous, coercive, have growth of order $1$ and are also subadditive and homogeneous of degree $1$ in the first vadiable; see the assumptions $(H_1)$--$(H_8)$ in \cite{MS} for the precise details.
We also refer the reader to \cite[Introduction and Remark 1.5]{MS} for a justification of the presence of the second-order gradient in the bulk density and of the energy density $\Psi_2$.

The main result obtained in \cite{MS} is an integral representation result for the relaxed functional
\begin{equation}\label{S3}
I(g,b,G):=\inf\Big\{\liminf_{n\to\infty} J_{\eps_n}(u_n): u_n\in SBV^2(\Omega;\R3), u_n\stackrel{L^1}\to g, \frac1{\eps_n}\nabla_3 u_n\stackrel{L^1}\to b, \nabla_\alpha u_n\stackrel{L^1}\to G\Big\},
\end{equation}
where $(g,b,G)\in BV^2(\omega;\R3){\times}BV(\omega;\R3){\times}BV(\omega;\R{3\times2})$, $\eps_n$ is a sequence tending to zero from above, and the functional $J_{\eps_n}$ is obtained by rescaling $E_{\eps_n}^{MS}$ in \eqref{enMS} by $\eps_n$ in the third variable and then dividing by $\eps_n$, analogously to the definition of $F_\eps$ from $E_\eps$ in \eqref{107}.
The field $b$ plays the role of the field $\cl d$ in the previous sections.
One important difference between \cite{MS} and the present work is that the vector field $b$ in \eqref{S3} already depends only on $x_\alpha$ because of the coercivity conditions alone (see again \cite[assumptions $(H_1)$--$(H_8)$ and Remark 1.5]{MS}), whereas in the previous sections it was necessary to average in the $x_3$ variable.
Moreover, it is evident that the process of relaxation in \eqref{S3} is a simultaneous passage to structured deformations and dimension reduction.
\begin{theorem}[{\cite[Theorem 1.4]{MS}}]\label{S4}
The functional $I$ defined in \eqref{S3} does not depend on the sequence $\{\eps_n\}$ and admits an integral representation of the form $I=I_1+I_2$, where, for $(g,G)\in BV^2(\omega;\R3){\times}BV(\omega;\R{3\times2})$,
\begin{equation}\label{S5}
I_1(g,G)=\int_\omega W_1(G-\nabla g)\,\de x_\alpha+\int_\omega W_1\bigg(-\frac{\de D^cg}{\de|D^cg|}\bigg)\,\de|D^cg|(x_\alpha)+\int_{\omega\cap S(g)} \Gamma_1([g],\nu(g))\,\de\cH^1(x_\alpha)
\end{equation}
and for $(b,G)\in BV(\omega;\R3){\times}BV(\omega;\R{3\times2})$
\begin{equation}\label{S6}
\begin{split}
I_2(b,G)= & \int_\omega W_2(b,G,\nabla b,\nabla G)\,\de x_\alpha+\int_\omega W_2^\infty\bigg(b,G,\frac{\de D^c(b,G)}{\de|D^c(b,G)|}\bigg)\,\de|D^c(b,G)| \\
& +\int_{\omega\cap S((b,G))} \Gamma_2((b,G)^+,(b,G)^-,\nu((b,G)))\,\de\cH^1(x_\alpha).
\end{split}
\end{equation}
The energy densities of $I_1$ are obtained as follows: for each $A\in\R{3\times2}$, $\lambda\in\R3$, and $\eta\in\S1$,
\begin{align}
W_1(A)&=\inf\bigg\{\int_{Q'\cap S(u)} \cl\Psi_1([u],\nu(u))\,\de\cH^1(x_\alpha): u\in SBV(Q';\R3), u|_{\partial Q'}=0, \nabla u=A\; a.e.\bigg\}, \label{W1} \\
\Gamma_1(\lambda,\eta)&=\inf\bigg\{\int_{Q_\eta'\cap S(u)} \cl\Psi_1([u],\nu(u))\,\de\cH^1(x_\alpha): u\in SBV(Q_\eta';\R3), u|_{\partial Q_\eta'}=\gamma_{\lambda,\eta}, \nabla u=0\; a.e.\bigg\}, \label{Gamma1} 
\end{align}
with $\gamma_{\lambda,\eta}$ defined as in \eqref{gammale} and
\begin{equation}\label{S9}
\cl\Psi_1(\lambda,\nu):=\inf\{\Psi_1(\lambda,(\nu|t)):t\in\R{}\}.
\end{equation}
The energy densities of $I_2$ are obtained as follows: for each $A\in\R{3\times2}$, $B_\beta\in\R{3\times3\times2}$, $\Lambda,\Theta\in\R{3\times3\times2}$, and $\eta\in\S1$,
\begin{equation}\label{S10}
\begin{split}
W_2(A,B_\beta)=\inf\bigg\{ & \int_{Q'}\cl W(A,\nabla u)\,\de x_\alpha+\int_{Q'\cap S(u)} \cl\Psi_2([u],\nu(u))\,\de\cH^1(x_\alpha): \\
& u\in SBV(Q';\R{3\times3}), u_{ik}|_{\partial Q'}=\sum_{j=1}^2 B_{ijk}x_j\bigg\},
\end{split}
\end{equation}
\begin{equation}\label{S11}
\begin{split}
\Gamma_2(\Lambda,\Theta,\eta)=\inf\bigg\{ & \int_{Q_\eta'} \cl W^\infty(u,\nabla u)\,\de x_\alpha+\int_{Q_\eta'\cap S(u)} \cl\Psi_2([u],\nu(u))\,\de\cH^1(x_\alpha): \\
& u\in SBV(Q'_\eta;\R{3\times3}), u|_{\partial Q_\eta'}=u_{\Lambda,\Theta,\eta}\bigg\},
\end{split}
\end{equation}
where
\begin{equation}\label{S12}
u_{\Lambda,\Theta,\eta}(x_\alpha):=
\begin{cases}
\Lambda & \text{if $0\leq x_\alpha\cdot\eta<1/2$,} \\
\Theta & \text{if $-1/2<x_\alpha\cdot\eta<0$,}
\end{cases}
\end{equation}
and with $\cl W$ and $\cl\Psi_2$ as follows: decomposing $B\in\R{3\times3\times3}$ into $(B_\beta,B_3)\in\R{3\times3\times2}{\times}\R{3\times3\times1}$ (i.e., $B_\beta$ denotes $B_{ijk}$ with $k=1,2$), define
\begin{equation}\label{S13}
\cl W(A,B_\beta):=\inf\{W(A,(B_\beta,B_3)): B_3\in\R{3\times3\times1}\},
\end{equation}
and for $\Lambda\in\R{3\times3}$ and $\eta\in\S1$, let
\begin{equation}\label{S14}
\cl\Psi_2(\Lambda,\eta):=\inf\{\Psi_2(\Lambda,(\eta|t)):t\in\R{}\}.
\end{equation}
\end{theorem}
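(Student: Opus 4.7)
The plan is to prove Theorem \ref{S4} by the localization and blow-up approach familiar from Section \ref{sect:LHS} (and originating in \cite{FM}), suitably adapted to the $SBV^2$ setting and to the fact that the approximating sequences $u_n$ track three limit fields simultaneously. First I would localize the functional, setting $I(g,b,G;A)$ for each $A\in\cA(\omega)$ by restricting the integrals defining $J_{\eps_n}$ to $A\times I$. Using a standard diagonalization along a countable family of ``good'' open sets (balls with rational centers whose boundaries carry no mass for the total variations of $g$, $b$, $G$), I would extract a subsequence $\eps_n\to 0^+$ along which $I(g,b,G;\cdot)$ becomes the trace of a nonnegative Radon measure on $\cA(\omega)$, absolutely continuous with respect to $\cL^2\res\omega+\cH^1\res S(g)+\cH^1\res S((b,G))$. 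The independence from the sequence $\{\eps_n\}$ then follows by a standard uniqueness-of-limit argument once the blow-up formulas have been identified.

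The splitting $I=I_1+I_2$ is the structural heart of the argument and reflects the fact that the density $\Psi_1$ of the initial energy depends only on the jump of $u$, while $W$ and $\Psi_2$ depend on $\nabla u$ and $\nabla^2 u$. At Lebesgue points of the absolutely continuous parts, blow-ups along sequences $u_n^\delta(y):=\delta^{-1}[u_n(x_0+\delta y_\alpha, y_3)-g(x_0)]$ decouple into two independent cell problems: one posed at the level of the jumps of $u$, where the relevant datum is $G-\nabla g$ (giving $W_1$ via \eqref{W1}), and one posed at the level of the jumps of $\nabla u$, where the relevant data are $(b,G)$ and $(\nabla b,\nabla G)$ (giving $W_2$ via \eqref{S10}). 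The appearance of the Cantor terms in \eqref{S5} and \eqref{S6} comes from recession-function arguments in the standard way (see, e.g., \cite{AFP}), while the jump densities $\Gamma_1,\Gamma_2$ arise by blow-ups at $\cH^1$-almost every point of $S(g)$ and $S((b,G))$, respectively, together with boundary-adjustment truncations to meet the conditions $u|_{\partial Q_\eta'}=\gamma_{\lambda,\eta}$ and $u|_{\partial Q_\eta'}=u_{\Lambda,\Theta,\eta}$.

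The infimizations appearing in $\cl\Psi_1$, $\cl W$, and $\cl\Psi_2$ (see \eqref{S9}, \eqref{S13}, \eqref{S14}) are the signature of the dimension reduction: they encode the freedom to optimize over the out-of-plane components (third column of the normal or third slice of the second gradient) that become ``invisible'' once $\eps_n\to0$. Operationally, in the recovery sequence one can prescribe the in-plane data freely and choose the $x_3$-dependence to realize the infimum; in the lower-bound argument, any slice $(\nu|t)$ of the normal gives an admissible test and one takes the infimum.

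The main obstacles I expect are threefold. First, constructing recovery sequences in $SBV^2$ that simultaneously realize the three weak limits $g$, $b$, $G$ while keeping the three energy contributions controlled: this requires combining an Alberti-type construction (to adjust $\nabla u_n$ so that $\nabla_\alpha u_n\to G$ in $L^1$) with an $x_3$-oscillating correction (to realize $\eps_n^{-1}\nabla_3 u_n\to b$), analogous to the correctors built in Step 1 of the proof of Theorem \ref{T3d2d}. Second, matching boundary conditions of the blow-up sequences to the cell-problem boundary data in \eqref{W1}--\eqref{Gamma1} and \eqref{S10}--\eqref{S11} without introducing spurious jumps; this is the technical step where one typically replaces the original sequence on a small collar by an affine (or piecewise affine) interpolant, controlling the error via the coercivity of $\Psi_1$ and $\Psi_2$. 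Third, justifying rigorously the decoupling of the blow-up analyses for $g$ and for $(b,G)$, which amounts to showing that the jump sets $S(g)$ and $S((b,G))$ can be treated independently up to $\cH^1$-negligible sets, so that $I_1$ and $I_2$ genuinely add without double counting.
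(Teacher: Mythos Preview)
Your overall architecture---localize, show the localized functional is the trace of a Radon measure, then establish upper and lower bounds for its Radon--Nikod\'ym derivatives via blow-up---matches the paper's sketch and the method of \cite{MS}. The substantive divergence is in how you obtain the additive splitting $I=I_1+I_2$. You propose to recover it \emph{a posteriori}, as a by-product of the blow-up: at generic points the cell problems for the $\Psi_1$-contribution and for the $(W,\Psi_2)$-contribution should decouple, and you correctly flag the rigorous justification of this decoupling (your ``third obstacle'') as a real difficulty, since $S(g)$ and $S((b,G))$ may overlap. The paper instead establishes $I=I_1+I_2$ \emph{first}, at the level of the functional and before any blow-up, by means of Alberti's theorem (Theorem~\ref{Al}): given separate near-optimal sequences for the $\Psi_1$-term and for the $(W,\Psi_2)$-term, Alberti produces an $SBV^2$ competitor whose gradient agrees with the second sequence while its jump part is dictated by the first, so that the infimum of the sum equals the sum of the infima (this is \cite[Section 3.1]{MS}). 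Only after this decomposition are the blow-up arguments run, now independently on $I_1$ and on $I_2$. That front-loaded use of Alberti dissolves your third obstacle entirely and keeps the two analyses clean; your route, if pushed through, would be more direct but must confront precisely the interaction you identify. Note also that in your sketch Alberti appears only as a tool for building recovery sequences---in the paper it is the structural key to the splitting itself.
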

In the statement of Theorem \ref{S4}, a superscript ``$\infty$'' denotes the recession function at infinity (see \cite[hypothesis ($H_3$) on page 461]{MS}), whereas the superscript ``$c$''  denotes the Cantor part.
We also point out that we maintained the notation from \cite{MS} for the convenience of the reader; in the notations of our previous sections, the  triple $(g,b,G)$ would be written $(\cl g,\cl G,\cl d)$.

\smallskip

\noindent
\textit{Sketch of the proof of Theorem \ref{S4}} 
By making use of Theorem \ref{Al}, the relaxed functional $I$ defined in \eqref{S3} can be additively decomposed into the functionals $I_1$ and $I_2$ defined in \eqref{S5} and \eqref{S6}, respectively, decoupling the effects of the surface energy density $\Psi_1$ from the bulk energy density $W$ and the surface energy density $\Psi_2$ (see \cite[Section 3.1]{MS}).
The result is obtained by proving upper and lower bounds for the Radon-Nikod\'ym derivative of the energies $I_1$ and $I_2$.
The technique is analogous to that presented in detail in the proof of Theorem \ref{T3d2d} in Section \ref{sect:LHS}.
The lower bounds aim at proving the $\liminf$ inequality \eqref{BGLI}; the upper bounds aim at proving the $\limsup$ inequality \eqref{BGLS}.
\qed

\smallskip

We are not undertaking a comparison of the relaxed energy in Theorem \ref{S4} with those obtained in Theorems \ref{secondleft} and \ref{secondright} at this level of generality, however, we do so for the particular choice made in Section \ref{sect:comp_ex}, namely for an initial energy where the only non-zero contribution comes from the jumps of the $SBV$ function, and not of its gradient, i.e., $W=\Psi_2=0$ and $\Psi_1(\lambda,\nu)=|\lambda\cdot\nu|$, see \eqref{purelyinterfacial}.
For this particular choice, we provide explicit formulas for the energy densities \eqref{W1}, \eqref{Gamma1}, \eqref{S10}, and \eqref{S11}, and we show that the relaxed energy $I$ is identically zero.

As it can be seen from the definitions of the energy densities $W_1$, $\Gamma_1$, $W_2$, and $\Gamma_2$, the functionals $I_1$ and $I_2$ are of the first order, meaning that only first-order derivatives enter in their definitions (the function spaces in \eqref{W1}, \eqref{Gamma1}, \eqref{S10}, and \eqref{S11} are of $SBV$ type).
Since $W=0$ and $\Psi_2=0$, the relaxed densities $W_2$ and $\Gamma_2$ in \eqref{S10} and \eqref{S11} are trivially equal to zero, so that the term $I_2$ in \eqref{S6} vanishes. 
Moreover, the relaxation procedure for obtaining $I_1$ can be carried out in the $SBV$ setting, as in the previous Sections \ref{sect:LHS} and \ref{sect:RHS}.
By invoking the results of \cite{nogap,S17}, one sees that the use of the strong convergence in $L^1$ in \eqref{S3} for an initial energy featuring $\Psi_1$ only is the same as using the weak convergence in $L^p$ considered in the previous Sections \ref{sect:LHS} and \ref{sect:RHS}, namely, there is no difference in considering either $\nabla u=A$ a.e. or $\int_{Q'} \nabla u=A$ in formula \eqref{W1}.

To compute the energy densities \eqref{W1} and \eqref{Gamma1} with the choice $\Psi_1(\lambda,\nu)=|\lambda\cdot\nu|$,we recall the definition of $\cl\Psi_1$ in \eqref{S9} and notice that it reads
\begin{equation}\label{1335}
\cl\Psi_1(\lambda,\eta)=\inf\{|\lambda\cdot(\eta|t)|:t\in\R{}\}=
\begin{cases}
|\lambda\cdot\tilde\eta| & \text{if $\lambda_3=0$}, \\
0 & \text{if $\lambda_3\neq0$}.
\end{cases}
\end{equation}

To show that $W_1=\Gamma_1=0$, we use the fact that $\overline{\Psi}_1(\lambda, \eta)$ vanishes whenever $\lambda_3 \neq 0$, so that jumps of infimizing approximations $u_n$ with non-zero third components have no energetic cost. We control the energetics cost of any necessary jumps with zero third-components by relegating them to transverse segments within the frames

\begin{equation}\label{frame2}
\cF_{n}:=Q'\setminus\Big(\frac{n-1}{n}\Big)Q',\qquad \cF_{n, \eta}:=Q'_\eta\setminus \Big(\frac{n-1}{n}\Big)Q'_\eta,
\end{equation}%
with $n$ a positive integer and $\eta \in \S1$. This approach was employed in \cite{nogap}, and we refer the reader to that article for any details omited here.

To show that $W_1(M) = 0$ for all $M \in \R{3\times2}$, we choose a constant $C > 0$ and, for each $n$ a function $v_n \in SBV(\cF_{n};\R3)$ such that
\begin{equation}\label{1705} 
v_n|_{\partial \cF_{n}} = 0, \quad \nabla v_n = M\quad \text{a.e. in}\; \cF_{n}, \quad \text{and}\quad | D^s v_n| \leq \frac{C}{n}.
\end{equation}

Next, we partition the shrunken square $(\tfrac{n-1}{n})Q'$ into $n$ thin rectangles $\mathcal{C}_{k,n}$, $k = 0, \ldots , n-1$, each of height $\frac{n-1}{n}$ and width $\frac{n-1}{n^2}$ (the width corresponding to the direction $e_1 = (1,0)$). 
Denoting the center of each rectangle by $c_{k,n}$, we define for each $n$ a function $u_n \in SBV(\omega;\R2)$ by 
\begin{equation}\label{1694} 
u_n(x) = \begin{cases}
v_n(x) & \text{if $x \in \mathcal{F}_{n}$,}\\
M(x - c_{k,n}) + \frac{(-1)^k}{n^2}e_3 & \text{if $x \in  c_{k,n}$, $k=1,\ldots,n -1$.}\\
\end{cases}
\end{equation}
It follows that
\begin{equation}\label{1700}
S(u_n) \subset S(v_n) \cup \partial (\tfrac{n-1}{n})Q' \cup \bigcup_{k=0}^{n-2} ( \partial \mathcal{C}_{k,n} \cap  \partial \mathcal{C}_{k+1,n})
\end{equation}
and we first consider $[u_n](x)$ when $x \in \partial (\tfrac{n-1}{n})Q'$. Using \eqref{1705} we have (to within a fixed choice of signs in front of each term)
\begin{equation}\label{1704}
[u_n](x)\cdot e_3 = \pm M(x - c_{k,n})\cdot e_3 \pm \frac{1}{n^2}= \pm (x - c_{k,n})\cdot M^\top e_3 \pm \frac{1}{n^2},
\end{equation}
so that $[u_n](x)\cdot e_3 = 0$ if and only if $M^\top e_3 \neq 0$ and $x$ is on the line $\ell = \{ y \in \R2: ( y - c_{k,n})\cdot M^\top e_3 \pm \frac{1}{n^2} = 0\}$ in $\R2$ whose distance from $c_{k,n}$ is $(n^2|M^\top e_3|)^{-1} = O(n^{-2}).$
Because the distance from $c_{k,n}$ to $\partial (\tfrac{n-1}{n})Q'$ is at least $\frac{n-1}{2n^2} = O(n^{-1})$, it follows that for $n$ sufficiently large the line $\ell$ intersects $\partial (\tfrac{n-1}{n})Q'$ at exactly two points. 
We conclude from \eqref{1335} that, whether or not $M^\top e_3 \neq 0$, for $n$ sufficiently large
\begin{equation}\label{1711}
\overline{\Psi}_1( [u_n](x), \nu(u_n)(x)) = 0\quad \text{for $\cH^1$-a.e.\@ $x\in \partial (\tfrac{n-1}{n})Q'$.}
\end{equation}

We consider next a point $x \in  \bigcup_{k=0}^{n-2} ( \partial \mathcal{C}_{k,n} \cap  \partial \mathcal{C}_{k+1,n})$ and use \eqref{1694} to compute
\begin{equation}\label{1715}
[u_n](x)\cdot e_3 = \pm M(c_{k,n} - c_{k+1,n})\cdot e_3 \pm \frac{1}{n^2} = \pm(c_{k,n} - c_{k+1,n})\cdot M^\top e_3 \pm \frac{1}{n^2},
\end{equation}
which is zero only if $M^\top e_3 \neq 0$. 
However, $|c_{k,n} - c_{k+1,n}| = \frac{n-1}{n^2} = O(n^{-1})$ so that for $n$ sufficiently large $[u_n](x)\cdot e_3 \neq 0$ for every $x \in \bigcup_{k=0}^{n-2} ( \partial \mathcal{C}_{k,n} \cap  \partial \mathcal{C}_{k+1,n})$, and we conclude 
\begin{equation}\label{1722}
\overline{\Psi}_1( [u_n](x), \nu(u_n)(x)) = 0\quad \text{for $\cH^1$-a.e.\@ $x\in \bigcup_{k=0}^{n-2} (\partial \mathcal{C}_{k,n} \cap \partial \mathcal{C}_{k+1,n})$}
\end{equation}
and that, by \eqref{1705}, \eqref{1694} and \eqref{1700}, 
\begin{equation}\label{1726}
\int_{Q'\cap S(u_n)}\Psi_1([u_n], \nu(u_n))\, \de \cH^1(x_\alpha)  = \int_{Q'\cap S(v_n)}\Psi_1([v_n], \nu(v_n))\, \de \cH^1(x_\alpha) = |D^s v_n| = O\left(\frac{1}{n}\right).
\end{equation}
Because $u_n$ is admissible in \eqref{W1} we conclude that $W_1(M) = 0$.

To show that $\Gamma_1(\lambda, \eta) = 0$ for all $\lambda \in \R3$ and $\eta \in \S1$, we note first that the mapping $\gamma_{\lambda, \eta}: Q'_\eta \to \R3$ is admissible in \eqref{Gamma1}, so that
\begin{equation}\label{1734}
0  \leq \Gamma_1(\lambda, \eta)  \leq \int_{Q'_\eta\cap S(\gamma_{\lambda, \eta})}\Psi_1([\gamma_{\lambda, \eta}], \nu(\gamma_{\lambda, \eta}))\, \de \cH^1(x_\alpha) = \Psi_1(\pm \lambda, \eta).
\end{equation}
In particular, if $\lambda_3 \neq 0$, then \eqref{1734} and \eqref{1335} yield $\Gamma_1(\lambda, \eta) = 0.$

Suppose now that $\lambda_3 = 0$.  
With $\cF_{n, \eta}$ defined as in \eqref{frame2}, we define $u_n: Q'_\eta \to \R3$ by
\begin{equation}\label{1742} u_n(x) = \begin{cases}
\gamma_{\lambda, \eta} & \text{if $x \in \mathcal{F}_{n}$,}\\
\gamma_{\lambda, \eta} - \frac{1}{n}e_3 & \text{if $x \in  (\tfrac{n-1}{n})Q'_\eta$ and $x\cdot \eta \leq 0$,} \\
\gamma_{\lambda, \eta} + \frac{1}{n}e_3 & \text{if $x \in  (\tfrac{n-1}{n})Q'_\eta$ and $x\cdot \eta \geq 0$.} \\
\end{cases}
\end{equation}
It follows that $S(u_n) \subset \partial (\tfrac{n-1}{n})Q'_\eta \cup \{ x \in Q'_\eta : x\cdot \eta= 0\}$. 
If $ x \in Q'_\eta \cap S(u_n),$ then $[u_n](x) = [\gamma_{\lambda,\eta}] + \frac{m(x)}{n}e_3$ with $m(x) \in \{ 0, 1, -1, 2, -2\}$ and
\begin{equation}\label{1743}
m(x) = 0\quad \text{if and only if}\quad x\cdot\eta = 0\quad \text{and}\quad |x| \in \bigg[ \frac{n-1}{2n}, \frac{1}{2}\bigg].
\end{equation}
Because $[\gamma_{\lambda, \eta}](x) \in \{\lambda, -\lambda, 0\}$ and $\lambda\cdot e_3 = \pm\lambda_3 = 0,$ it follows that $[u_n](x) \cdot e_3 = 0$ if and only if $m(x) = 0$, i.e., 
\begin{equation}\label{1744}
[u_n](x)\cdot e_3 = 0 \quad \text{if and only if}\quad x\cdot \eta = 0\quad \text{and}\quad |x| \in \bigg[\frac{n-1}{2n}, \frac{1}{2}\bigg].
\end{equation}
We conclude from \eqref{1335} that: $\cl\Psi_1([u_n](x), \nu(u_n)(x))\neq 0$ if and only if $x\cdot \eta = 0$ and $|x| \in \left[ \frac{n-1}{2n}, \frac{1}{2}\right]$, so that
\begin{equation}\label{1745}
\begin{split}
0  \leq \Gamma_1(\lambda, \eta) \leq & \int_{Q'_\eta\cap S(u_n)} \Psi_1([u_n], \nu(u_n))\, \de \cH^1(x_\alpha) \\
= & \int_{Q'_\eta\cap\left\{x\cdot \eta = 0 \text{ and } |x| \in \left[ \frac{n-1}{2n}, \frac{1}{2}\right]\right\}} \Psi_1([\gamma_{\lambda, \eta}], \nu(\gamma_{\lambda,\eta})\, \de \cH^1(x_\alpha) \leq \frac{|\lambda |}{n}.
\end{split}
\end{equation}
Because each $u_n$ is admissible in \eqref{Gamma1}, $\{u_n\}$ is an infimizing sequence and $\Gamma_1(\lambda, \eta) = 0.$

\section{Conclusions}\label{conclusions}
%\red{mention this for the functionals \eqref{107a}, \eqref{200}, \eqref{2018}, \eqref{2019}, \eqref{S3}
%In view of \eqref{B130}, the functional $\cF_{3d,2d}$ defined above is a $\Gamma$-lower limit (see Definition \ref{defLGL}).}

In this paper we have studied a problem that involves both dimension reduction and introduction of disarrangements.
From the point of view of energetics, this entails two relaxation processes, so that the order in which they are performed is relevant for the structure of the final, doubly relaxed energy functional.
In this respect, we applied the two relaxation processes one after the other in both orders and we obtained two doubly relaxed energy functionals, those in \eqref{201} and in \eqref{301}.

At the level of generality considered in Theorems \ref{secondleft} and \ref{secondright}, we did not undertake a comparison of these two formulas.
Nonetheless, we compared them in a special case which is relevant to the multiscale nature of the geometry of structured deformations, namely we considered an initial energy which takes into account only the normal component of the jumps.
In this case, we were able to prove that the doubly relaxed energy functionals are the same, see Proposition \ref{S1}.
Moreover, we compared our procedure with one that has been studied by Matias and Santos in \cite{MS}: here, the dimension reduction and the relaxation to structured deformations are performed simultaneously.
With the same choice of a purely interfacial initial energy, we computed the relaxed energy in the context of \cite{MS} and we proved that it is identically equal to zero.
This suggests looking at different scalings in the vanishing thickness parameter $\eps$, in particular, looking for higher-order terms in the expansion by $\Gamma$-convergence in the sense of \cite{AB}. 

It is worth noticing that, in spite of the technical differences in the three relaxation procedures carried out, the final relaxed energies are all defined on the same type of mathematical objects, namely a structured deformation and a director, defined on the cross--section $\omega$.
To see this, one can compare the triple $(\cl g,\cl G,\cl d)\in SBV(\omega;\R3){\times}L^1(\omega;\R{3{\times}2}){\times}L^p(\omega; \R3)$ in Theorems \ref{secondleft} and \ref{secondright} with the triple $(g,b,G)\in BV^2(\omega;\R3){\times}BV(\omega;\R3){\times}BV(\omega;\R{3\times2})$ in Theorem \ref{S4}.

It is natural to conjecture that the relaxation described in Theorem \ref{S4} yields a lower energy than those provided by Theorems \ref{secondleft} and \ref{secondright}.
In this regard, the results contained in \cite{S17} provide a useful tool for studying this conjecture.
In view of the results of Sections \ref{sect:comp_ex} and \ref{sect:6}, we can answer affirmatively to the conjecture in the case of a particular choice of the initial energy. 

Finally, we remark that a common feature of all three approaches is the introduction of constraints on the admissible disarrangements, namely that the normal to the jump set be aligned with the two-dimensional approximating object.
This is enforced by the condition $\nu(u_n)\cdot e_3=0$ in \eqref{107a}, by the condition $\int_Q \nabla u\,\de x=(B^{\backslash 3}|Ae_3)$ in \eqref{2015}, and by the conditions cited in \cite[Remark 1.5]{MS}.

\medskip

\noindent\textbf{Acknowledgements.} The authors warmly thank the Departamento de Matem\'atica at Instituto Superior T\'ecnico in Lisbon, the Departamento de Matem\'atica at Universidade de \'Evora, the Center for Nonlinear Analysis at Carnegie Mellon University in Pittsburgh, the Fakult\"at f\"ur Mathematik at Technische Universit\"at M\"unchen, SISSA in Trieste, and the Dipartimento di Ingegneria Industriale of the Universit\`a di Salerno, where this research was developed.

The research of J.M.\@ was partially supported by the Funda\c{c}\~{a}o para a Ci\^{e}ncia e a Tecnologia through grant UID/MAT/04459/2013, by the Center for Nonlinear Analysis at Carnegie Mellon University in Pittsburgh, and by the Gruppo Nazionale per l'Analisi Matematica, la Probabilit\`a e le loro Applicazioni (GNAMPA) of the Istituto Nazionale di Alta Matematica (INdAM).
The research of M.M.\@ was partially funded by the ERC Advanced grant \emph{Quasistatic and Dynamic Evolution Problems in Plasticity and Fracture} (Grant agreement no.: 290888) and by the ERC Starting grant \emph{High-Dimensional Sparse Optimal Control} (Grant agreement no.: 306274).
M.M.\@ is a member of the Gruppo Nazionale per l'Analisi Matematica, la Probabilit\`a e le loro Applicazioni (GNAMPA) of the Istituto Nazionale di Alta Matematica (INdAM).


\begin{thebibliography}{99}
\bibitem{numberone} {\sc M. Alam and S. Luding}: \emph{First normal stress difference and crystallization in a dense sheared granular fluid}. Phys.  Fluids, \textbf{15} (2003), 2298-2312.

\bibitem{AL} {\sc G.  Alberti}: \emph{A Lusin-type Theorem for gradients}. J. Funct. Anal.,  \textbf{100} (1991), 110-118.

\bibitem{AFP} {\sc L.  Ambrosio, N.  Fusco, and D.  Pallara}: \emph {Functions of Bounded Variation and Free Discontinuity Problems}. Oxford University Press, 2000.

\bibitem{AMT} {\sc L. Ambrosio, S. Mortola, and V. M. Tortorelli}: \emph{Functionals with linear growth defined on vector valued $BV$ functions}. J. Math. Pures et Appl., \textbf{70} (1991), 269-323.

\bibitem{angelillo} {\sc M. Angelillo}: \emph{Constitutive relations for no-tension materials}. Meccanica \textbf{28} (1993), 195-202.

\bibitem{AB} {\sc G. Anzellotti and S. Baldo}: \emph{Asymptotic development by $\Gamma$-convergence}. Appl. Math. Optim., \textbf{27}(2) (1993), 105-123.

\bibitem {BBBF} {\sc A. C. Barroso, G. Bouchitt\'{e}, G. Buttazzo, and I. Fonseca}, \emph{Relaxation of bulk and interfacial energies}. Arch. Rational Mech. Anal., \textbf{135}(2) (1996), 107-173.

\bibitem{nogap} {\sc A. C. Barroso, J. Matias, M. Morandotti, and D. R. Owen}: \emph{Explicit Formulas for Relaxed Energy Densities Arising from Structured Deformations}. Math. Mech. Complex Syst., \textbf{5}(2) (2017), 163-189.

\bibitem{BFM} {\sc G. Bouchitt\'e, I. Fonseca, and L. Mascarenhas}: \emph{Bending moment in membrane theory}. J. Elasticity, \textbf{73}(1-3) (2003), 75-99. 

\bibitem{BFM1} {\sc  G. Bouchitt\'e, I. Fonseca, and L. Mascarenhas}: \emph{The Cosserat vector in membrane theory: a variational approach}. J. Convex Anal., \textbf{16}(2) (2009), 351-365.

\bibitem{Braides} {\sc A. Braides}: $\Gamma$-convergence for beginners. Oxford Lecture Series in Mathematics and its Applications, 22. Oxford University Press, Oxford, 2002. 

\bibitem{BF2001} {\sc A. Braides and I. Fonseca}: \emph{Brittle Thin Films}. Appl. Math. Optim., \textbf{44} (2001), 299-323.

\bibitem{CLT1} {\sc M. Carriero, A. Leaci, and F. Tomarelli}: \emph{A second order model in image segmentation: Blake and Zisserman functional}. Prog. Nonlinear Differ. Equ. Appl., \textbf{25} (1996), 57-72.

\bibitem{CLT2} {\sc M. Carriero, A. Leaci, and F. Tomarelli}: \emph{Second order variational problems with free discontinuity and free gradient discontinuity}. In: Calculus of Variations: Topics from the Mathematical Heritage of E. De Giorgi. Quad. Mat., vol. \textbf{14} (2004), 135-186.

\bibitem{CF} {\sc R. Choksi and I. Fonseca}: \emph{Bulk and Interfacial Energies for Structured Deformations of Continua}. Arch. Rational Mech. Anal., \textbf{138} (1997), 37-103.

\bibitem{DGA}{\sc E. De Giorgi and L. Ambrosio}: \emph{Un nuovo tipo di funzionale del calcolo delle variazioni}. Atti Accad. Naz. Lincei,  \textbf{82} (1988), 199-210.

\bibitem{DGF} {\sc E. De Giorgi and T. Franzoni}: \emph{Su un tipo di convergenza variazionale}. Atti Accad. Naz. Lincei Rend. Cl. Sci. Fis. Mat. Natur. (8) \textbf{58}(6) (1975), 842-850.

\bibitem{DM} {\sc G. Dal Maso}: An introduction to $\Gamma$-convergence. Progress in Nonlinear Differential Equations and their Applications, 8. Birkh\"auser Boston, Inc., Boston, MA, 1993.

\bibitem{DPO95} {\sc G. Del Piero and D. R. Owen}: \emph{Integral-gradient formulae for structured deformations}. Arch. Rational Mech. Anal. \textbf{131}(2) (1995), 121-138.

\bibitem{DPZ} {\sc L. Deseri, M. Piccioni, and D. Zurlo}: \emph{Derivation of a new free energy for biological membranes}. Contin. Mech. Thermodyn., \textbf{20}(5) (2008), 255-273. 

\bibitem{DO} {\sc L. Deseri and D. R. Owen}: \emph{Stable disarrangement phases of elastic aggregates: a setting for the emergence of no-tension materials with non-linear response in compression}. Meccanica \textbf{49}(12) (2014), 2907-2932.

\bibitem{DO1} {\sc L. Deseri and D. R. Owen}: \emph{Toward a field theory for elastic bodies undergoing disarrangements}. Essays and papers dedicated to the memory of Clifford Ambrose Truesdell III. Vol. I. J. Elasticity \textbf{70}(1-3) (2003), 197-236.

\bibitem{FM}{\sc I. Fonseca and S. M\"uller }: \emph{Quasi-convex integrands and lower semicontinuity in $L^1$}. SIAM J. Math. Anal. \textbf{23} (1992), 1081-1098.
%\emph{Relaxation of quasi-convex functionals in $BV(\Omega;\R p)$ for integrands $f(x,u,\nabla u)$}. Arch.  Rational Mech. Anal., \textbf{123} (1993), 1-49.

\bibitem{K} {\sc D. Khakhar}: \emph{Rheology and mixing of granular materials}. Macromol. Mater. Eng., \textbf{296} (2011), 278-289.

\bibitem{KS4}{\sc R. V. Kohn and G. Strang}: \emph{Optimal design in elasticity and plasticity}. Int. Journal for Numerical Methods in Engineering, \textbf{22}, (1986), 183-188.

\bibitem{LDR95} {\sc H. Le Dret and A. Raoult}: \emph{The nonlinear membrane model as variational limit of nonlinear three-dimensional elasticity}. J. Math. Pures Appl., \textbf{74} (1995), 549-578.

\bibitem{LDR96} {\sc H. Le Dret and A. Raoult}: \emph{The membrane shell model in nonlinear elasticity: A variational asymptotic derivation}. J. Nonlinear Sci., \textbf{6}(1) (1996), 59-84.

\bibitem{LSZ} {\sc M. Lucchesi, M. \v{S}ilhav\'{y}, and N. Zani}: \emph{A new class of equilibrated stress fields for no-tension bodies}. J. Mech. Mater. Struct., \textbf{1}(3) (2006) 503-539.

\bibitem{M}{\sc P.  Marcellini}: \emph {Approximation of quasiconvex functions, and lower semicontinuity of multiple integrals}. Manuscripta Math., \textbf{51}(1-3) (1985), 1-28.

\bibitem{M07} {\sc J. Matias}: \emph{Differential inclusions in $SBV_0(\Omega)$ and applications to the Calculus of Variations}, J. Convex Analysis \textbf{14}(3) (2007), 465-477.

\bibitem{MMZ} {\sc J. Matias, M. Morandotti, and E. Zappale}: \emph{Optimal Design of Fractured Media with Prescribed Macroscopic Strain}. J. Math. Anal. Appl., \textbf{449} (2017), 1094-1132.

\bibitem{MS} { \sc J.  Matias and P. M.  Santos}: \emph{A dimension reduction result in the framework of structured deformations}. Appl. Math. Optim., \textbf{69}(3) (2014), 459-485.

\bibitem{Mue} {\sc N. Mueggenburg}: \emph{Behavior of granular materials under cyclic shear}. Phys. Rev. E, \textbf{71} (2005), 031301.

\bibitem{owen} {\sc D. R. Owen}: \emph{Elasticity with gradient-disarrangements: a multiscale perspective for strain-gradient theories of elasticity and of plasticity}. J. Elasticity \textbf{127}(1) (2017), 115-150.

\bibitem{OP15} {\sc  D. R. Owen and R. Paroni}: \emph{Optimal flux densities for linear mappings and the multiscale geometry of structured deformations}. Arch. Rational Mech. Anal. \textbf{218}(3) (2015), 1633-1652.

\bibitem{R2}{\sc D. Raabe, M. Sachtleber, Z. Zhao, F. Roters, and S. Zaefferer}: \emph{Micromechanical and macromechanical effects in grain scale polycrystal plasticity. Experimentation and simulation}. Acta Materialia, \textbf{49}  (2001) 3433-3441.

\bibitem{S17} {\sc M. \v{S}ilhav\'y}: \emph{The general form of the relaxation of a purely interfacial energy for structured deformations}. Math. Mech. Complex Syst., \textbf{5}(2) (2017), 191-215.

\bibitem{V} {\sc A. I. Vol'pert}: \emph{The space $BV$ and quasilinear equations}. Mat. Sbornik \textbf{73} (1967), 255-302.

\bibitem{VH} {\sc A. I. Vol'pert and S. I. Hudjaev}: \emph{Analysis in Classes of Discontinuous Functions and the Equations of Mathematical Physics}. Nijhoff, 1985.
\end{thebibliography}
\end{document}